\definecolor{darkblue}{rgb}{0.0,0.0,0.3}
\theoremstyle{plain}
\newtheorem{theorem}{Theorem}[section]
\newtheorem*{theorem*}{Theorem}
\newtheorem{lemma}[theorem]{Lemma}
\newtheorem{proposition}[theorem]{Proposition}
\newtheorem*{proposition*}{Proposition}
\newtheorem{corollary}[theorem]{Corollary}
\newtheorem*{corollary*}{Corollary}
\theoremstyle{definition}
\newtheorem{remark}[theorem]{Remark}
\newtheorem*{remark*}{Remark}
\numberwithin{equation}{section}
\newcommand{\Z}{\mathbb{Z}}
\DeclareMathOperator{\rad}{rad}
\DeclareMathOperator{\Disc}{Disc}
\DeclareMathOperator{\disc}{disc}
\DeclareMathOperator{\idx}{index} % \index means something already
\def\dd{{\,{\rm d}}}
\begin{document}

\begin{frontmatter}[classification=text]
%% EDITOR: this will force the keywords to appear right after the Abstract.
%%   If the abstract is too long and would force the keywords off the
%%   front page, please comment out % [classification=text] above
%%   This way the keywords will be floated on the bottom of the first page
%%   even though the Abstract spills over to the next page.

%%% AUTHOR: Title goes here.  This line is optional.  You must use it
%%   if title has footnote attached or requires nontrivial typesetting,
%%   e.g., inclusion of linebreaks to force nice layout.
\title{Improved bounds on number fields of small degree} %% please capitalize all significant words

%%% AUTHOR:
%%% List all authors. If you wish, place grant acknowledgements in \thanks.
%%% In brackets include a short tag for each author.
\author[Anderson]{Theresa C. Anderson\thanks{Supported by NSF DMS-2231990, DMS-1954407, and CAREER DMS-2237937}}
\author[Gafni]{Ayla Gafni\thanks{Supported by Simons Collaboration Grant No.~946710}}
\author[Hughes]{Kevin Hughes\thanks{Supported by the Additional Funding Programme for Mathematical Sciences, delivered by EPSRC (EP/V521917/1) and the Heilbronn Institute for Mathematical Research}}
\author[Lemke Oliver]{Robert J. Lemke Oliver\thanks{Supported by NSF DMS-2200760 and a Simons Fellowship in Mathematics}}
\author[Lowry-Duda]{David Lowry-Duda\thanks{Supported by the Simons Collaboration in Arithmetic Geometry, Number Theory, and Computation via the Simons Foundation Grant No.~546235}}
\author[Thorne]{Frank Thorne\thanks{Supported by NSF DMS-2101874 and a Simons Collaboration Grant No.~586594}}
\author[Wang]{Jiuya Wang\thanks{Supported by NSF DMS-2201346}}
\author[Zhang]{Ruixiang Zhang\thanks{Supported by NSF DMS-1856541 and NSF DMS-2207281}}

%%% AUTHOR: Abstract goes here
\begin{abstract}
We study the number of degree $n$ number fields with discriminant bounded by $X$.
In this article, we improve an upper bound due to Schmidt on the number of such fields that was previously the best known upper bound for $6
\leq n \leq 94$.
\end{abstract}
\end{frontmatter}

\section{Introduction and outline of paper}\label{sec:introduction}

Let $N_n(X) := \# \{ K / \mathbb{Q} : [K: \mathbb{Q}] = n, \lvert \Disc(K) \rvert \leq X \}$
count the number of degree $n$ number fields over $\mathbb{Q}$ with bounded discriminant of
size $X$. We consider all extensions as being inside a fixed algebraic closure
$\overline{\mathbb{Q}}$ of $\mathbb{Q}$.
A classical theorem of Hermite shows that $N_n(X)$ is finite, and an optimized and modern version of Hermite's argument due to Schmidt \cite{Schmidt} shows that $N_n(X)$ may be bounded by $O_n(X^{\frac{n+2}{4}})$.
It is conjectured that the actual sharp upper bound is $O_n(X)$; thus improvements to the Schmidt bound, and the classical work of Hermite, have attracted a lot of attention.

For $n \leq 5$, precise asymptotic formulas for $N_n(X)$ are known~\cite{DH69, CDDO02,
bhargavaquartic, bhargavaquintic}.  For large $n$, there are substantial improvements over the Schmidt bound, beginning with work of Ellenberg and Venkatesh~\cite{EllenbergVenkatesh}, which was subsequently improved in work of Couveignes~\cite{Couveignes} and Lemke Oliver and Thorne~\cite{LOT20}.  For $n \geq 95$, the best bound is that of~\cite{LOT20}, namely $N_n(X) \ll_n X^{c (\log n)^2}$,
where $c \leq 1.564$ is an explicitly computable constant.
While this improvement is substantial as $n \to \infty$, in the complementary regime $6 \leq n \leq 94$, the Schmidt bound has remained the best available for $25$ years.

Our main theorem gives a general improvement over the Schmidt bound.

\begin{theorem}\label{thm:main-bound}
For each $n\geq 6$ and every $\epsilon>0$, we have $N_n(X) \ll_{n,\epsilon} X^{\frac{n+2}{4} - \frac{1}{4n-4} + \epsilon}$.
\end{theorem}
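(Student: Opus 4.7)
The plan is to follow Schmidt's framework and extract an extra saving of $X^{1/(4n-4)}$ from a finer analysis of his coefficient box. For each degree-$n$ field $K$ with $|\Disc(K)|\leq X$, Minkowski's theorem applied to the trace-zero sublattice $\mathcal{O}_K^0\subset\mathcal{O}_K$ (covolume $\asymp|\Disc(K)|^{1/2}$) produces an integral primitive element $\alpha\in\mathcal{O}_K^0$ whose archimedean conjugates satisfy $|\alpha^{(i)}|\leq c_n X^{1/(2n-2)}$. Its minimal polynomial $f_\alpha(x)=x^n+a_2 x^{n-2}+\cdots+a_n$ then has coefficients obeying $|a_k|\leq C_k X^{k/(2n-2)}$, and counting lattice points in this anisotropic box recovers Schmidt's bound $X^{(n+2)/4}$.

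To save a factor of $X^{1/(4n-4)}$, my plan is to partition the count by the index $m=[\mathcal{O}_K:\mathbb{Z}[\alpha]]$, using the identity $|\disc(f_\alpha)|=m^2|\Disc(K)|\leq m^2 X$. Fix a threshold $M=X^\delta$ to be chosen. In the small-index regime $m\leq M$, the polynomial $f_\alpha$ lies in the Schmidt box \emph{and} on the thin sublevel set $\{|\disc(f)|\leq M^2 X\}$; since $\disc(f)$ can reach magnitude $\asymp X^{n/2}$ on the full box, this is a genuine analytic constraint that should shave a polynomial factor off the count. In the large-index regime $m>M$, the fact that $\mathbb{Z}[\alpha]$ has large index in $\mathcal{O}_K$ forces $\bar\alpha \pmod p$ into a proper subring of $\mathcal{O}_K/p\mathcal{O}_K$ for each prime $p\mid m$, and a Selberg-style inclusion--exclusion over such $p$ should show these $\alpha$ are sparse. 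An equivalent packaging is to use successive minima of $\mathcal{O}_K^0$ and try to choose $\alpha$ among integer combinations of short basis vectors so that $a_2=-\tfrac12\mathrm{Tr}_{K/\mathbb{Q}}(\alpha^2)$ comes out smaller than the Minkowski-forced size $X^{1/(n-1)}$ by a factor of $X^{1/(4n-4)}$, since trimming any single coefficient by this amount yields exactly the desired improvement in the product count.

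The main obstacle I expect is the thin-set lattice count in the small-index regime. The discriminant hypersurface is non-convex, has degenerate strata (the multiple-root locus and its higher analogues), and an honest upper bound on the number of integer polynomials in the $M^2 X$-neighborhood of $\{\disc(f)=0\}$ inside the anisotropic Schmidt box requires a Davenport-style slicing over the coefficients of largest range (typically $a_{n-1}$ and $a_n$) together with gradient estimates for $\disc(f)$ away from its singular locus. Once these thin-set bounds are available, optimizing $\delta$ to balance the small- and large-index contributions should extract exactly the exponent saving $X^{1/(4n-4)}$, while the $\epsilon$ absorbs divisor-function and logarithmic losses from the sieve over $m$ and from patching neighborhoods of the singular strata of $\{\disc(f)=0\}$.
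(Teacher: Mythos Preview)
Your overall decomposition --- split the polynomials produced by Schmidt's lemma according to whether the index $m=[\mathcal{O}_K:\mathbb{Z}[\alpha]]$ is small or large --- is exactly the framework the paper uses (see \S\ref{ssec:strategy}).  Two genuine gaps remain in your plan, however.

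First, in the small-index regime you correctly flag the thin-set count as the main obstacle, but your proposed Davenport slicing over $a_{n-1},a_n$ with gradient estimates will not by itself deliver the required exponent.  The paper's input here is a sharp local density bound: the Haar measure of $\{f\in\mathcal{P}_n(\mathbb{R}):|\disc(f)|\leq\delta\}$ is $O_n(\delta^{1/2+1/n})$ (Proposition~\ref{prop:etale-density}), proved by passing to \'etale algebras and exploiting $\disc(f)=\prod_{i<j}(\lambda_i-\lambda_j)^2$ together with pigeonhole on the roots.  Feeding this volume bound into Davenport's lemma gives Corollary~\ref{cor:small-poly-disc}.  Slicing in coefficient space alone, without this change of variables to roots, does not obviously yield the $\delta^{1/2+1/n}$ exponent that the final optimization needs.

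Second, and more seriously, your large-index argument (``Selberg-style inclusion--exclusion over $p\mid m$'') implicitly assumes that a large $m$ has many distinct prime divisors to sieve over.  This fails precisely when $m$ is powerful --- say $m$ is a large prime power --- and in that case a sieve over the primes dividing $m$ gives nothing.  The paper handles this by a further dichotomy on $\rad(\idx(f))$: when $\rad(\idx(f))\geq H^{1-\epsilon}$ one can indeed run a sieve (Theorem~\ref{thm:large-index}, following Bhargava--Shankar--Wang's strong/weak-multiple framework), but when $\rad(\idx(f))<H^{1-\epsilon}$ the index has a large cubefull divisor $d$, and the paper instead counts via Poisson summation and a detailed analysis of the nonarchimedean Fourier transform $\widehat{\psi_{d^2}}$ of the indicator of $\{d^2\mid\disc(f)\}$ (Sections~\ref{sec:nonarchimedean}--\ref{sec:large-radical}).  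This Fourier-analytic step, including the structural Lemmas~\ref{lem:relation}--\ref{lem:disc-fourier-bound} on the support and size of $\widehat{\psi_{p^{2k}}}$, is the most substantial new ingredient in the paper and is not anticipated by your plan.  Without a replacement for it, the powerful-index case remains uncontrolled and the argument does not close.
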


We do not expect that the saving $\frac{1}{4n-4}$ is optimal for our method, but to extract a further saving from the work below would take significant care.  In fact, our work is inspired by work of Bhargava, Shankar, and Wang \cite{BSW} in a different context, and simultaneously with our work, an improvement of (essentially) $\frac{1}{2n-2}$ over Schmidt's bound was independently obtained by the same authors in \cite{BSW2}. Their independent work bears some similarity with our approach, but remarkably the two approaches differ greatly in multiple ways.  Therefore, we expedite some pieces of our approach, while focusing on other pieces where our techniques are different and which we expect to have other applications in different settings.

In \S\ref{sec:schmidt}, we summarize Schmidt's approach of estimating $N_n(X)$ by counting
monic polynomials $f(x) = x^n + c_1 x^{n-1} + \cdots + c_n$ where each $c_i$ is bounded by
$O(X^{\frac{i}{2n - 2}})$.
We slightly restructure Schmidt's method in order to not restrict to polynomials of trace $0$.
To improve upon Schmidt's bound, we separately count polynomials with ``particularly small''
discriminants and polynomials with ``particularly squarefull'' discriminants in comparison to the discriminants of the fields they cut out.
Our method is easier to describe in the context of Schmidt's method; thus, we defer further
description until \S\ref{ssec:strategy}.  Roughly speaking, we study polynomials with ``particularly small'' discriminant in \S\ref{sec:small-disc} and polynomials with ``particularly squarefull'' discriminant in \S\ref{sec:large-radical} and \S\ref{sec:strong+weak}.  Inspired by \cite{ShankarTsimerman-Heuristics}, we bound both the archimedean and nonarchimedean local density of polynomials of small discriminant in Proposition~\ref{prop:etale-density}; these densities are important in \S\ref{sec:large-radical} and \S\ref{sec:strong+weak}.  These densities bear some resemblance to the `Main Heuristic assumption' in \cite{ShankarTsimerman-Heuristics}, but Proposition~\ref{prop:etale-density}, though natural, appears to be novel.  Due to the significant blend of analytic techniques developed to work with polynomials in different contexts, we devote an early section, Section \ref{change of variables}, to their exposition.

It is in \S\ref{sec:nonarchimedean} and \S\ref{sec:large-radical} that our approach differs most substantially from \cite{BSW} and \cite{BSW2}.
In \S\ref{sec:nonarchimedean}, which we regard as our most novel, we study the nonarchimedean Fourier transform of the set of polynomials whose discriminant is divisible by a large power of a prime.
The main results of this section are somewhat technical statements about the support and size of this Fourier transform, but their proofs rely on a beautiful and substantial structure of the discriminant that is revealed upon taking its Fourier transform.
We expect that such Fourier transforms, and the underlying structure of the discriminant so revealed, may play a role in other problems beyond that considered in Theorem \ref{thm:main-bound}, for example perhaps in generalizing the ideas from \cite{ShankarTsimerman-Heuristics}.
We use the considerations of \S\ref{sec:nonarchimedean} to show in
\S\ref{sec:large-radical} bounds for polynomials whose discriminants are ``powerful": that is, they admit prime factors to large powers.  Section \ref{change of variables} describes the various techniques, such as change of variables,
that permeate the next three sections.  We hope this reference section facilitates the reading of the rest of the paper.
Finally, in \S\ref{sec:strong+weak}, we study polynomials whose discriminants have a large factor that is not powerful, incorporating the framework of~\cite{BSW}. The specific prime factorization of the large factor dividing the polynomial discriminant determines which polynomials are treated using \S\ref{sec:large-radical} or \S\ref{sec:strong+weak}, and is described in the next section.

\section{Outline of approach}\label{sec:schmidt}

Let $\mathcal{F}_n(X)$ denote the set of fields $K/\mathbb{Q}$ with degree $n$
and discriminant satisfying $\lvert \mathrm{Disc}(K) \rvert \leq X$.  Note that $N_n(X) = \#\mathcal{F}_n(X)$, our desired count.  The Schmidt bound \cite{Schmidt} asserts that $\#\mathcal{F}_n(X) \ll_n X^{\frac{n+2}{4}}$.  The key to Schmidt's approach is the following.

\begin{lemma} \label{lem:height-bound}
Let $K \in \mathcal{F}_n(X)$ and assume $K$ is primitive, i.e. that it has no
proper nontrivial subfields.  Then there is a monic polynomial $f(x) \in
\mathbb{Z}[x]$ with trace $0$, i.e. $f(x) = x^n + c_2 x^{n-2} + \cdots + c_n$,
for which $K \simeq \mathbb{Q}[x]/\langle f(x) \rangle$ and where each $c_i$
satisfies $\lvert c_i \rvert \ll_n X^{\frac{i}{2n-2}}$.
\end{lemma}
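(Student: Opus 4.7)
The plan is to apply Minkowski's convex body theorem to the lattice of trace-zero algebraic integers in $\mathcal{O}_K$ in order to locate a small nonzero element $\alpha$; primitivity of $K$ will then force $\mathbb{Q}(\alpha) = K$, and since the conjugates of $\alpha$ are small, its minimal polynomial will have small coefficients.

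First, I would embed $K$ into $K_{\mathbb{R}} := K \otimes_{\mathbb{Q}} \mathbb{R} \cong \mathbb{R}^{r_1} \oplus \mathbb{C}^{r_2}$ via the standard Minkowski embedding and consider the trace-zero hyperplane $H \subset K_{\mathbb{R}}$, setting $\Lambda := \mathcal{O}_K \cap H$, a lattice of rank $n-1$ in $H$. Using the short exact sequence $0 \to \Lambda \to \mathcal{O}_K \xrightarrow{\mathrm{Tr}} d\mathbb{Z} \to 0$, where $d \mid n$ is the gcd of the traces of elements of $\mathcal{O}_K$, together with the standard fact that $\mathcal{O}_K$ has covolume comparable (up to an $n$-dependent factor) to $\sqrt{|\Disc(K)|}$ in $K_{\mathbb{R}}$, I can bound the covolume of $\Lambda$ in $H$ (with the induced Euclidean structure) by $\ll_n \sqrt{|\Disc(K)|} \leq \sqrt{X}$.

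Next, for a parameter $R > 0$, I would consider the symmetric convex body $B_R := \{(x, z) \in K_{\mathbb{R}} : |x_i| \leq R \text{ and } |z_j| \leq R\}$. Its slice $B_R \cap H$ is a symmetric convex body in $H$ whose $(n-1)$-dimensional volume is $\gg_n R^{n-1}$. Choosing $R = C_n X^{\frac{1}{2n-2}}$ for a sufficiently large constant $C_n$ depending only on $n$, Minkowski's theorem will produce a nonzero $\alpha \in \Lambda$ satisfying $|\sigma(\alpha)| \leq R$ for every embedding $\sigma : K \hookrightarrow \mathbb{C}$.

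Finally, since $\alpha \neq 0$ has trace zero it cannot be rational (as $\mathrm{Tr}_{K/\mathbb{Q}}(q) = nq$ for $q \in \mathbb{Q}$), so primitivity of $K$ forces $\mathbb{Q}(\alpha) = K$. The minimal polynomial $f(x) = x^n + c_1 x^{n-1} + \cdots + c_n$ of $\alpha$ is then monic of degree $n$ with integer coefficients, satisfies $c_1 = -\mathrm{Tr}(\alpha) = 0$, and has $i$-th coefficient equal (up to sign) to the $i$-th elementary symmetric polynomial in the conjugates of $\alpha$; this yields $|c_i| \leq \binom{n}{i} R^i \ll_n X^{i/(2n-2)}$, as required. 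The main technical step will be handling the covolume of $\Lambda$ and the volume of $B_R \cap H$ cleanly so that all implicit constants depend only on $n$, which requires accounting carefully for the trace-zero restriction and the normalization of the Minkowski embedding on the complex factors.
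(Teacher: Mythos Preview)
Your proposal is correct and follows essentially the same approach as the paper: both use the geometry of numbers on the rank-$(n-1)$ lattice of trace-zero integers to produce a small generator $\alpha$, then invoke primitivity to conclude $\mathbb{Q}(\alpha)=K$. The only cosmetic difference is that the paper phrases the existence of $\alpha$ via Minkowski's second theorem on successive minima (taking $\lambda_1 \ll_n |\Disc(K)|^{1/(2n-2)}$ from $\lambda_1\cdots\lambda_{n-1}\asymp |\Disc(K)|^{1/2}$), whereas you apply Minkowski's first theorem directly to the sliced box $B_R\cap H$; both are standard and lead to the same conclusion.
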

\begin{proof}
In the Minkowski embedding, the trace $0$ elements of $\mathcal{O}_K$ form a
rank $n-1$ lattice with covolume $\asymp_n \lvert \mathrm{Disc}(K)
\rvert^{1/2}$.  The $n-1$ Minkowski minima for this lattice satisfy $\lambda_1
\cdots \lambda_{n-1} \asymp \lvert \mathrm{Disc}(K) \rvert^{1/2}$, so
$\lambda_1 \ll_n \lvert \mathrm{Disc}(K) \rvert^{1/2(n-1)}$.  In particular,
there is a non-zero element $\alpha \in \mathcal{O}_K$ with trace $0$ whose
height is $O_n(\lvert \mathrm{Disc}(K) \rvert^{\frac{1}{2n-2}})$.  Since $\alpha$ has trace $0$ and $\alpha \neq 0$, it follows that $\alpha \not\in \mathbb{Q}$.  By our assumption that $K$ admits no interesting subfields, it follows that $\mathbb{Q}(\alpha) = K$.  The minimal polynomial of $\alpha$ then satisfies the conclusion of the lemma.
\end{proof}

There are $O_n(X^{\frac{n+2}{4}})$ polynomials of the type produced in Lemma \ref{lem:height-bound}.  It thus follows that the number of primitive fields contained in $\mathcal{F}_n(X)$ is also $O_n(X^{\frac{n+2}{4}})$.

\begin{remark}
To get a bound on non-primitive fields too, Schmidt (essentially) uses this argument, but over a moving subfield.  His bound actually improves for the number of imprimitive fields: if $d_{\max}$ is the largest proper divisor of $n$, his bound on imprimitive fields in $\mathcal{F}_n(X)$ is $X^{\frac{d_{\max}+2}{4}}$.  This is smaller than the bound claimed in Theorem \ref{thm:main-bound}, so it suffices to prove the claimed bound holds for the set $\mathcal{F}_n^\mathrm{prim}(X)$ of primitive field extensions.
\end{remark}

It is possible to rework Schmidt's argument slightly to make it marginally more convenient to invoke the results of the earlier work of Bhargava, Shankar, and Wang \cite{BSW}.  First, we consider polynomials without trace $0$.

Define the \emph{height} $H$ of a polynomial $f(x) = x^n + c_1 x^{n-1} + \dots
c_n$ by $H=\max\lvert c_i \rvert^{1/i}$.  We will frequently abuse notation by using the phrase `polynomials of height $H$' to mean polynomials with height bounded by $H$.
\begin{lemma} \label{lem:multiplicity}
Let $n \geq 2$.  There is a constant $C_n > 0$ such that each field $K \in
\mathcal{F}_n^\mathrm{prim}(X)$ is cut out by $\gg_n X^{\frac{1}{2n-2}}$
polynomials $f(x) \in \mathbb{Z}[x]$ of the form $f(x) = x^n + c_1 x^{n-1} +
\dots c_n$, where each $c_i$ satisfies $\lvert c_i \rvert \leq C_n^i X^{\frac{i}{2n-2}}$.
\end{lemma}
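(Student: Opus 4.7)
The plan is to extract a single generator of $K$ from Lemma \ref{lem:height-bound} and then produce $\gg_n X^{1/(2n-2)}$ distinct polynomial witnesses by translating this generator by small integers. Since $K \in \mathcal{F}_n^{\mathrm{prim}}(X)$, Lemma \ref{lem:height-bound} yields $\alpha \in \mathcal{O}_K$ with $\mathbb{Q}(\alpha) = K$ whose minimal polynomial $g(x) = x^n + c_2 x^{n-2} + \cdots + c_n$ has each $|c_i| \ll_n X^{i/(2n-2)}$. In particular, the height $H := \max_i |c_i|^{1/i}$ of $g$ satisfies $H \leq A_n X^{1/(2n-2)}$ for a constant $A_n$ depending only on $n$.

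For each integer $m$, set $\alpha_m := \alpha - m \in \mathcal{O}_K$. Then $\mathbb{Q}(\alpha_m) = \mathbb{Q}(\alpha) = K$, and the minimal polynomial of $\alpha_m$ is the monic integer translate $g_m(x) := g(x + m)$. Different values of $m$ yield different polynomials (they have different constant terms, for instance), so the map $m \mapsto g_m$ is injective and each $g_m$ cuts out $K$. It remains to bound the height of $g_m$ for a large range of $m$, and to count that range.

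For the height control, I expand $g_m(x) = \sum_{k=0}^{n} d_k x^{n-k}$ via the binomial theorem:
\[
d_k = \sum_{i=0}^{k} c_i \binom{n-i}{k-i} m^{k-i}.
\]
Restricting to $|m| \leq H$ and using the trivial bound $|c_i| \leq H^i$, one finds $|d_k| \leq 2^n (k+1)\, H^k \leq B_n^k H^k$ for a suitable $B_n = B_n(n)$. Combining with $H \leq A_n X^{1/(2n-2)}$ and taking $C_n := A_n B_n$ large enough yields $|d_k| \leq C_n^k X^{k/(2n-2)}$ uniformly in such $m$. The number of integers $m$ with $|m| \leq H$ is $2\lfloor H \rfloor + 1$, which is $\gg_n X^{1/(2n-2)}$ once $X$ is large enough in terms of $n$; for small $X$, the trivial choice $m = 0$ already gives $1 \gg_n X^{1/(2n-2)}$ after adjusting the implied constant, since $X^{1/(2n-2)} = O_n(1)$ in that range. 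The only obstacle is a minor bookkeeping one, namely verifying that a translation of magnitude comparable to the height of $g$ inflates the height only by a constant factor, and this is the content of the binomial estimate above.
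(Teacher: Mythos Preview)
Your approach is exactly the paper's: take the generator $\alpha$ from Lemma~\ref{lem:height-bound} and translate by small integers. There is, however, one slip. You define $H := \max_i |c_i|^{1/i}$ to be the \emph{actual} height of the minimal polynomial of $\alpha$ and then translate only over $|m|\le H$, asserting that this gives $\gg_n X^{1/(2n-2)}$ polynomials. That inference is false: you have only the \emph{upper} bound $H \le A_n X^{1/(2n-2)}$, and $H$ can be $O_n(1)$ even when $X$ is arbitrarily large. Concretely, any fixed primitive $K$ of degree $n$ lies in $\mathcal{F}_n^{\mathrm{prim}}(X)$ for all $X \ge |\Disc(K)|$, while the $\alpha$ furnished by Lemma~\ref{lem:height-bound} and its height $H$ do not change with $X$. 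So your count $2\lfloor H\rfloor+1$ stays bounded while $X^{1/(2n-2)}\to\infty$.

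The fix is immediate: translate over $|m| \le A_n X^{1/(2n-2)}$ instead of $|m|\le H$. Your binomial estimate goes through unchanged, since you may bound both $|c_i|$ and $|m|^i$ by $(A_n X^{1/(2n-2)})^i$, yielding $|d_k| \le (k+1)2^n (A_n X^{1/(2n-2)})^k \le C_n^k X^{k/(2n-2)}$. With this correction your argument coincides with the paper's proof.
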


\begin{proof}
By the proof of Lemma \ref{lem:height-bound}, each $K \in \mathcal{F}_n^\mathrm{prim}(X)$
has an element $\alpha \in \mathcal{O}_K$ with trace $0$ and height bounded by $H_0 := B_n X^{\frac{1}{2n-2}}$,
for a positive constant $B_n$. Then there are at least $H_0$ elements in
$\mathcal{O}_K$ of height at most $2H_0$, corresponding to the translates
$\alpha + k$ for $\lvert k \rvert \leq H_0$, $k \in \mathbb{Z}$.  The minimal
polynomial $m(x) = x^n + c_1 x^{n-1} + \dots + c_n$ of such an element
satisfies $\lvert c_i \rvert \ll_n H_0^i$, and the result follows.
\end{proof}

For an admissible choice of $C_n$ in Lemma \ref{lem:multiplicity}, set $H := C_n X^{\frac{1}{2n-2}}$.
There are $O_n( H^{\frac{n^2+n}{2}})$ polynomials of height $H$, with each primitive field cut out by at least $\gg_n H$ different polynomials.
Dividing by this minimum multiplicity, we find that there must be no more than $O_n( H^{\frac{n^2+n-2}{2}}) = O_n(X^{\frac{n+2}{4}})$ different fields produced, again recovering Schmidt's bound.

\begin{remark}
The argument of Lemma \ref{lem:multiplicity} is a bit ad hoc, but its
conclusion is likely close to optimal, apart from constants depending on $n$.
For ``typical'' fields $K \in \mathcal{F}_n(X)$, we should expect the
multiplicity with which $K$ is cut out to be $\asymp_n H^n / \sqrt{ \lvert
\mathrm{Disc}(K) \rvert}$.  If $H = X^{\frac{1}{2n-2}} \approx \lvert
\mathrm{Disc}(K) \rvert^{\frac{1}{2n-2}}$, this expected multiplicity is about $H$.  The proof of Lemma \ref{lem:multiplicity} thus doesn't lose much, and its proof will be convenient in the next section.
\end{remark}

\subsection{Strategy and Proof of Main Theorem}\label{ssec:strategy}

While Lemmas \ref{lem:height-bound} and \ref{lem:multiplicity} guarantee that each field $K \in \mathcal{F}_n^\mathrm{prim}(X)$ is cut out by a polynomial of height $H \ll_n X^{\frac{1}{2n-2}}$, it is not the case that every such polynomial cuts out a field of discriminant at most $X$.
In particular, for ``typical'' $f(x)$ of height $H$, we have $\mathrm{disc}(f) \approx H^{n^2-n} = X^{\frac{n}{2}}$, and this is typically also the order of the discriminant of the field cut out by $f$.
We should thus expect the relevant polynomials attached to $K \in \mathcal{F}_n^\mathrm{prim}(X)$ to be exceptional in one of two ways: either the discriminant of $f$ is unusually small, or the discriminant of the field cut out by $f$ is much smaller than the discriminant of $f$.
In the latter case, the ratio of the two discriminants is the square of the index $[\mathcal{O}_K : \mathbb{Z}[\alpha]]$ where $\alpha$ is a root of $f$;
we call this integer the index of the polynomial $f$, denoted $\mathrm{index}(f)$.

The condition that the discriminant is small is global, while the condition that the index is large is local.  However, since both conditions are preserved under translation, it follows from the proof of Lemma \ref{lem:multiplicity} that each $K \in \mathcal{F}_n^\mathrm{prim}(X)$ will still be cut out by $\gg_n H$ of these exceptional polynomials.  Thus, our main task is to bound the number of exceptional polynomials.

\subsection*{Proof of Theorem~\ref{thm:main-bound}}

We now describe the proof of our main theorem more concretely.

By Lemma~\ref{lem:multiplicity}, to bound $K \in \mathcal{F}_n^{\mathrm{prim}}(X)$
it suffices to bound the number of polynomials $f$ with height up to
$H \asymp_n X^{\frac{1}{2n-2}}$ that cut out a field with discriminant $X \asymp_n H^{2n
-2}$, where each field is counted with multiplicity of order at least $H$.

In Section~\ref{sec:small-disc}, we prove the following corollary, bounding the the number of polynomials with small discriminant.

\begin{corollary} \label{cor:small-poly-disc}
Let $n \geq 3$ and let $H$ be sufficiently large in terms of $n$.
The number of polynomials $f(x) \in \mathbb{Z}[x]$ of the form $f(x)=x^n + c_1
x^{n-1} + \dots + c_n$ with $\lvert c_i \rvert \leq H^i$ and
$\lvert \disc(f) \rvert \leq H^{n^2 - n - 2}$ is $O_n(H^{\frac{n^2+n}{2}-1})$.
\end{corollary}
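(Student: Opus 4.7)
My plan is to slice on the coefficient $c_n$ and exploit the explicit factorization of $\disc(f)$ in $c_n$. For fixed $(c_1, \ldots, c_{n-1}) \in \mathbb{Z}^{n-1}$, view $P(c_n) := \disc(f)$ as a polynomial in $c_n$. Using $\disc(f) = \pm \mathrm{Res}(f, f')$ and the product formula $\mathrm{Res}(f, f') = \prod_i f(\beta_i)$, where $\beta_i$ are the roots of $f'$ (which is independent of $c_n$), one obtains
\[
P(c_n) = (-1)^{n(n-1)/2} n^n \prod_{i=1}^{n-1}(c_n - \gamma_i), \qquad \gamma_i := -g(\beta_i),
\]
where $g(x) := f(x) - c_n = x^n + c_1 x^{n-1} + \cdots + c_{n-1} x$. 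Thus $P$ has degree $n-1$ in $c_n$ with constant leading coefficient $\pm n^n$.

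Applying the standard one-variable bound $\#\{t \in \mathbb{Z} : |P(t)| \leq Y\} \ll_n 1 + (Y/n^n)^{1/(n-1)}$ and summing over the $\ll H^{n(n-1)/2}$ tuples $(c_1, \ldots, c_{n-1})$ yields $\ll_n H^{n(n+1)/2 - 2/(n-1)}$ after substituting $Y = H^{n^2-n-2}$. When $n = 3$ the exponent $2/(n-1)$ equals $1$, matching the desired saving precisely. For $n \geq 4$, however, $2/(n-1) < 1$ and a refinement is needed.

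To handle $n \geq 4$ I would stratify by the separation structure of the critical values $\gamma_i$. On the well-separated stratum, where all $|\gamma_i - \gamma_j| \geq \eta$ for a parameter $\eta$ to be chosen, one has $|P'(\gamma_i)| \geq n^n \eta^{n-2}$, giving the improved fibre bound $\ll 1 + Y/\eta^{n-2}$. On a clustered stratum where $k$ of the $\gamma_i$ share a common $\eta$-neighborhood, one controls the measure via the change of variables $(c_{n-k}, \ldots, c_{n-1}) \leftrightarrow (\gamma_{i_1}, \ldots, \gamma_{i_k})$. Differentiating the identity $\gamma_j = -g(\beta_j)$ and using $g'(\beta_j) = 0$ gives $\partial \gamma_j/\partial c_{n-\ell} = -\beta_j^\ell$, so the Jacobian is
\[
\biggl|\det\bigl(\beta_{i_s}^\ell\bigr)_{s,\ell=1}^{k}\biggr| = \biggl|\prod_{s=1}^k \beta_{i_s}\biggr| \cdot |V(\beta_{i_1}, \ldots, \beta_{i_k})|,
\]
with $V$ the Vandermonde determinant; for $\beta_i$ of size $\asymp H$ in general position this is $\asymp H^{k(k+1)/2}$. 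This gives measure estimates for each clustered stratum, on which one then uses the crude fibre bound. Balancing the separated and clustered contributions by an appropriate choice of $\eta$ should yield $\ll_n H^{n(n+1)/2 - 1}$.

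The main obstacle is arranging the stratification to balance uniformly in $n$: for large $n$, no single choice of $\eta$ simultaneously controls all cluster sizes $k$, so a multi-scale or nested stratification is required. Moreover, the Jacobian estimate assumes the $\beta_i$'s themselves are in general position (of size $\asymp H$ with $\asymp H$ separations); degenerate loci where the $\beta_i$'s cluster or nearly vanish need further inductive control on the structure of $f'$. Carrying out this bookkeeping — propagating separation hypotheses through the stratification while preserving the claimed saving of $H$ — is the technical heart of the argument.
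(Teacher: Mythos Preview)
Your proposal is incomplete: for $n\ge 4$ you only sketch the stratification and explicitly defer ``the technical heart of the argument.''  The obstacles you flag are real.  Your clustered-stratum estimate requires the $\beta_i$ to be well separated and of size $\asymp H$, but this fails on a set of positive measure, forcing a secondary stratification of the $\beta$'s (i.e.\ of $\disc(f')$); the resulting nested multi-scale decomposition, with strata indexed by cluster patterns of both the $\gamma_i$'s and the $\beta_i$'s, is not obviously summable to the claimed $H^{-1}$ saving.  You have not shown that any choice of scales balances all strata, and I do not see a clean way to close this without essentially reproving a volume estimate of the type the paper uses.

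The paper's route avoids all of this.  It applies Davenport's Lipschitz principle to the region $\Omega_{H,Y}=\{|c_i|\le H^i,\ |\disc|\le H^{n^2-n}/Y\}$, so that the count is $\mathrm{vol}(\Omega_{H,Y})+O(H^{\frac{n^2+n}{2}-1})$, the error coming trivially from the projection forgetting $c_1$.  After anisotropic rescaling, the volume reduces to the archimedean density $\nu\{|\disc(f)|\le\delta\}$ on the unit box, which the paper bounds by $\delta^{1/2+1/n}$ (Proposition~\ref{prop:etale-density}) via the change of variables to roots: in root coordinates $|\disc|=\prod_{i<j}|\lambda_i-\lambda_j|^2\le\delta$ forces, by pigeonhole, some index $i$ with $\prod_{j\ne i}|\lambda_i-\lambda_j|\le\delta^{1/n}$, and the extra $\delta^{1/2}$ comes from the Jacobian $|\disc|^{1/2}$ of the roots-to-coefficients map.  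With $Y=H^2$ the volume term is $O(H^{\frac{n^2+n}{2}-1-2/n})$, so the Davenport error dominates and gives the corollary directly.  In effect, working in root coordinates collapses your entire cluster stratification into a single pigeonhole step.
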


If $f(x)$ is irreducible and cuts out the field $K$, then
the discriminants of $f$ and $K$ are related through the equation
$\mathrm{disc}(f) = \mathrm{Disc}(K) [\mathcal{O}_K : \mathbb{Z}[\alpha]]^2$,
where $\alpha$ is a root of $f$ over $K$.  As indicated above, we refer to the index $[\mathcal{O}_K : \mathbb{Z}[\alpha]]$ as the index of $f$, which we denote by $\mathrm{index}(f)$.
Corollary \ref{cor:small-poly-disc} implies that with at most $O_n(H^{\frac{n^2+n}{2} - 1})$ exceptions,
\[
\mathrm{index}(f)^2 \cdot \mathrm{Disc}(K)
= \mathrm{disc}(f)
> H^{n^2 - n - 2}
.\]
As $\mathrm{Disc}(K) \leq X \asymp_n H^{2(n-1)}$,
each of these polynomials has large index bounded from below by
\begin{equation}\label{eq:index_bound}
\mathrm{index}(f)
\gg_n H^{\frac{n^2 - n - 2}{2} - (n-1)}
= H^{\frac{n(n-3)}{2}}.
\end{equation}

Thus each of these polynomials has discriminant divisible by a large square.
To bound the number of such polynomials, we consider two subcases: when the radical of $\idx(f)$ is small or large.  (Recall that the radical of an integer is the product of its prime divisors.)
In Section~\ref{sec:large-radical},
we prove the following bound on the number of polynomials with large index, but small index radical.

\begin{theorem}\label{thm:small-radical-index}
Let $n \geq 6$ and fix any $\epsilon > 0$.  Then for any $H \geq 1$, the number of polynomials $f(x) \in \mathbb{Z}[x]$ of degree $n$ and height $H$ for which $\mathrm{rad}(\mathrm{index}(f)) < H^{1-\epsilon}$
but $\mathrm{index}(f) > H^{\frac{n(n-3)}{2}}$ is $O_{n,\epsilon}(H^{\frac{n^2+n}{2} - \frac{4}{3} - \frac{4}{n} + \epsilon}+H^{\frac{n^2+n}{2}-\frac{2n}{3}+3+\epsilon})$.
\end{theorem}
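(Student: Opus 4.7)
The plan is to exploit the Fourier-analytic machinery of Section \ref{sec:nonarchimedean} -- which provides quantitative control on polynomials of height $H$ whose discriminant vanishes to high order at a fixed prime -- and to sum the resulting bounds over all admissible factorizations of the index consistent with the hypotheses.

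Writing $I := \mathrm{index}(f)$ and $D := \mathrm{rad}(I)$, I would factor $I = \prod_{p \mid D} p^{e_p}$ and dyadically localize $D \in [D_0, 2D_0]$ as well as each $e_p$. Since $\prod p \leq H^{1-\epsilon}$ while $\prod p^{e_p} \geq H^{n(n-3)/2}$, the exponents $e_p$ are large on average; in particular, by pigeonhole there is at least one prime $p_0 \mid D$ with $p_0^{e_{p_0}}$ a sizeable power of $H$. For each dyadic window, the conditions $p^{2 e_p} \mid \mathrm{disc}(f)$ for $p \mid D$ place the coefficient vector $(c_1,\dots,c_n)$ into certain residue classes modulo $I^2$; by the Chinese Remainder Theorem, these residues decouple across the primes $p \mid D$, so the count factors into a product of local counts.

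The local count at each prime $p \mid D$ is controlled by the Fourier transform estimates of Section \ref{sec:nonarchimedean}: bounds on the support and size of the nonarchimedean Fourier transform of the indicator of $\{c : p^{2e} \mid \mathrm{disc}(c)\}$ translate, via Poisson summation applied to coefficients lying in the box of sidelengths $H^i$, into a bound of the shape $H^{(n^2+n)/2} \cdot p^{-e \cdot \alpha(p,e,H)}$. The savings exponent $\alpha$ depends on the relationship between $p^e$ and $H$ and is strongest when $p^e$ is a substantial fractional power of $H$. Multiplying the local savings over $p \mid D$ and summing over dyadic parameters then yields the two terms of the stated bound: the extremal regime with a single dominant prime power $p^e$ produces $H^{(n^2+n)/2 - 2n/3 + 3 + \epsilon}$, while the regime with the index's ``powerful part'' distributed across many primes to moderate exponents produces $H^{(n^2+n)/2 - 4/3 - 4/n + \epsilon}$.

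The principal obstacle is establishing uniformity of the single-prime Fourier bound across the full range of $p$ and $e$ permitted by the hypotheses; different ranges of $p^e$ relative to $H$ call for different applications of the support-plus-size estimates, and stitching these together so as to optimize the two-term bound requires careful bookkeeping. A secondary challenge is ensuring that the combinatorial multiplicity of possible factorizations of $I$ -- the number of exponent sequences $(e_p)$ compatible with a given $D$ and with the dyadic size of $I$ -- is bounded by $H^\epsilon$, which follows from standard divisor estimates but must be checked at each dyadic stage to avoid contaminating the per-prime savings.
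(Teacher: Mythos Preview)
Your proposal identifies the right toolbox (Poisson summation against the indicator of $p^{2e}\mid\disc$, the Fourier estimates of Section~\ref{sec:nonarchimedean}, CRT factorization), but it misses the one structural choice that makes the argument go through, and the heuristic of ``multiplying local savings over $p\mid D$'' is not how the proof actually runs.

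The paper does \emph{not} work prime-by-prime, nor does it pigeonhole to a dominant prime power. Instead it invokes Lemma~\ref{lem:powerful-divisor} (with $k=3$) to extract from $\idx(f)$ a single \emph{cubefull} divisor $d$ lying in the narrow window $H^{2-2\epsilon}<d\le H^{3-3\epsilon}$, and performs Poisson summation once, modulo $d^2$. This window is the crux. The upper bound $d\le H^{3-3\epsilon}$ forces $\lvert u_i\rvert\le d^2/(2H^i)<1$ for $i\ge 6$; the arithmetic-progression support structure (Lemma~\ref{lem:arithmetic_progression}) then cascades $u_6=0$ down to $u_i=0$ for all $i\ge 3$. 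Only at that point do Lemmas~\ref{lem:disc-fourier-bound-u1} and~\ref{lem:disc-fourier-bound} apply, because those lemmas give bounds \emph{only} for frequencies of the shape $(u_1,u_2,0,\dots,0)$. Your plan of running Poisson modulo $I^2$ (or large dyadic pieces thereof) would leave many frequency coordinates active, and Section~\ref{sec:nonarchimedean} supplies no bounds for general $\mathbf{u}$. Moreover, while the Fourier transform $\widehat{\psi_{d^2}}$ does factor through CRT, the dual sum over $\mathbf{u}$ does not decouple into a product of independent prime sums, so ``multiply the local savings'' is not a valid step.

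Finally, your attribution of the two terms in the theorem to two factorization regimes of the index is incorrect. Both terms come from the \emph{same} Poisson expansion: the term $H^{\frac{n^2+n}{2}-\frac{4}{3}-\frac{4}{n}+\epsilon}$ is the $\mathbf{u}=\mathbf{0}$ contribution summed over cubefull $d>H^{2-2\epsilon}$ (using $\widehat{\psi_{d^2}}(\mathbf{0})\ll d^{-1-2/n}$ and the $d^{1/3}$-density of cubefull integers), while $H^{\frac{n^2+n}{2}-\frac{2n}{3}+3+\epsilon}$ is the $u_2\ne 0$ contribution summed over cubefull $d\le H^{3-3\epsilon}$.
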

Note that for all $n \geq 8$, the first term above is larger, and therefore governs our overall bound.  However, for $n=6,7$ the second term dominates.

In Section~\ref{sec:strong+weak}, we bound the number of polynomials with large index radical.

\begin{theorem}\label{thm:large-index}
For any $n \geq 3$, any $H \geq 1$, any $\epsilon > 0$, and any $M \geq 1$, the number of polynomials $f$ of degree $n$, height $H$, and where $m^2 \mid \disc{f}$ for some squarefree $m \geq M$ is
\begin{equation*}
  O_{n, \epsilon}\Big(%
    \frac{H^{\frac{n^2+n}{2}}}{\sqrt{M}}
    +
    H^{\frac{n^2+n}{2} - \frac{1}{2}+\epsilon}
  \Big).
\end{equation*}
\end{theorem}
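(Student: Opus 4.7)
The plan is to apply a union bound over squarefree $m \geq M$ combined with a per-$m$ density estimate for the count $N(m, H) := \#\{f \text{ of height } H : m^2 \mid \disc(f)\}$. Namely,
\[
\#\{f : \exists \text{ squarefree } m \geq M \text{ with } m^2 \mid \disc(f)\} \leq \sum_{\substack{m \geq M \\ \mu^2(m) = 1}} N(m, H),
\]
and the overcounting on any single $f$ is at most $\tau(|\disc(f)|) \ll H^\epsilon$, absorbed into the final $\epsilon$-loss.

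For the per-$m$ bound, I would use a translation reduction. For each prime $p \mid m$, the hypothesis $p^2 \mid \disc(f)$ forces $f \bmod p$ to have a double root at some $k_p \in \mathbb{Z}/p\mathbb{Z}$; by CRT there is $k \in \mathbb{Z}/m\mathbb{Z}$ aligning the various $k_p$. The translated polynomial $g(x) := f(x+k)$ then has height $O_n(H)$ and satisfies simultaneously $m \mid c_{n-1}(g)$, $m \mid c_n(g)$, and (by translation invariance of $\disc$) $m^2 \mid \disc(g)$. Counting pairs $(k, g)$ in place of $f$: there are $m$ choices of $k$, and for each the count of $g$ of height $O_n(H)$ satisfying the three conditions is generically $O(H^{(n^2+n)/2}/m^3)$: the two coefficient congruences save $1/m^2$, and $m^2 \mid \disc(g)$ on top of these is (generically) a further independent congruence modulo $m$ saving an extra $1/m$. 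Hence $N(m, H) \ll H^{(n^2+n)/2}/m^2$ when $m$ lies in the range where these equidistributions hold.

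Summation then yields the main term:
\[
\sum_{\substack{m \geq M \\ \mu^2(m) = 1}} \frac{H^{(n^2+n)/2}}{m^2} \ll \frac{H^{(n^2+n)/2}}{M} \leq \frac{H^{(n^2+n)/2}}{\sqrt{M}},
\]
valid for $M \geq 1$. The secondary term $H^{(n^2+n)/2 - 1/2 + \epsilon}$ then accounts for (a) the regime where $m$ is too large for equidistribution to hold on some coefficient of $g$ (roughly $m \geq H^{1/2}$), which must be treated by a direct count using, e.g., that $|\disc(f)| \geq m^2$; and (b) degenerate local configurations, e.g., primes $p \mid m$ at which $f \bmod p$ has a triple root or two distinct double roots, which must be analyzed separately but contribute only a lower-order amount.

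The main obstacle will be the third step in the per-$m$ bound: establishing the extra $1/m$ savings from the constraint $m^2 \mid \disc(g)$ given the two coefficient congruences $m \mid c_{n-1}(g)$ and $m \mid c_n(g)$. Prime-by-prime, a short computation suggests that generically $p^2 \mid \disc(g)$ becomes equivalent to a further congruence on $c_n(g)$ modulo $p^2$, which explains the extra $1/p$ savings heuristically; but making this rigorous and uniform in $m$, while handling the degenerate local cases and the large-$m$ regime carefully, will require leveraging the framework of \cite{BSW} cited for this section together with a careful local analysis.
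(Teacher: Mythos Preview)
Your approach takes a different route from the paper and has a real gap in the large-$m$ regime. The paper does not bound $\sum_{m\geq M}N(m,H)$; instead it splits each $\mathcal{W}_m$ into \emph{strong} and \emph{weak} parts (according to whether $p^2\mid\disc(g)$ for every $g\equiv f\pmod p$, for each $p\mid m$), observes that $\bigcup_{m\geq M}\mathcal{W}_m\subseteq\bigcup_{m\geq\sqrt{M}}\mathcal{W}_m^{(1)}\cup\bigcup_{m\geq\sqrt{M}}\mathcal{W}_m^{(2)}$, and bounds each union \emph{directly}: the strong case by a resultant/geometric-sieve argument (fix $c_1,\dots,c_{n-2},c_n$; the strong condition forces a large common factor of two fixed nonzero integers, controlled by the divisor bound), and the weak case by the Bhargava--Shankar--Wang embedding into symmetric matrices combined with the large sieve. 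The $\sqrt{M}$ in the statement comes exactly from the passage $M\mapsto\sqrt{M}$ in this split, and the term $H^{(n^2+n)/2-1/2+\epsilon}$ is what the large sieve yields for the weak union.

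Your translation $g(x)=f(x+k)$ has height $O_n(H)$ only when the chosen representative satisfies $|k|\ll H$, so the setup already fails once $m\gg H$; and even in range, your ``extra $1/m$'' from $m^2\mid\disc(g)$ given $m\mid c_{n-1}(g),c_n(g)$ is precisely the weak/strong dichotomy, which you have not actually established (in the strong case there is no extra congruence, and one must instead exploit that the strong locus has higher codimension). The more serious issue is the tail: summing $N(m,H)$ over $m$ beyond the equidistribution range requires a uniform bound on $N(m,H)$ there, but ``a direct count using $|\disc(f)|\geq m^2$'' gives nothing, and bounding $\bigcup_{m>T}\mathcal{W}_m$ directly is just the theorem again with $M=T$. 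The weak-multiple contribution for large $m$ genuinely requires the BSW orbit-counting plus a sieve on those orbits; your concluding remark that making things rigorous ``will require leveraging the framework of~\cite{BSW}'' is in fact where essentially all of the content of the proof lies.
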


Theorem~\ref{thm:small-radical-index} implies that the number of polynomials of height $H$, index bounded below by~\eqref{eq:index_bound}, and with $\rad(\idx(f)) < H^{1 - \epsilon}$ is $O_{n, \epsilon}(H^{\frac{n^2 + n}{2} - 1 + \epsilon})$.
All remaining uncounted polynomials have $\rad(\idx(f)) \geq H^{1 - \epsilon}$, and thus have a squarefree divisor $m$ of size at least $H^{1 - \epsilon}$ such that $m^2 \mid \disc(f)$.
Taking $M = H^{1 - \epsilon}$ in Theorem~\ref{thm:large-index}
shows that there are at most $H^{\frac{n^2 + n}{2} -\frac{1}{2} + \epsilon}$ such polynomials.
Combining these bounds, we find that
\begin{equation*}
H \cdot \# \mathcal{F}_n^{\mathrm{prim}}(X)
\ll_{n, \epsilon}
H^{\frac{n^2 + n}{2} - \frac{1}{2} + \epsilon},
\end{equation*}
which completes the proof of Theorem~\ref{thm:main-bound} using $H \asymp_n X^{\frac{1}{2n-2}}$.\qed{}

\section{Local fields, \'etale algebras and mass formulas}\label{sec:etale}
\label{change of variables}

Changes of variables between etale algebras, polynomial roots and polynomial coefficients will be used throughout this paper, so we introduce this section as a useful reference.  As such, the terminology here is self-contained.

Let $v$ be a place of $\mathbb{Q}$.  Throughout most of this paper, we will be concerned with the
properties of the set of monic, degree $n$ polynomials $f \in \mathbb{Z}[x]$ for which $\lvert
\mathrm{disc}(f) \rvert_v$ is small in some suitable sense.  We approach these questions locally,
viewing our polynomials (perhaps after a suitable change of variables) as lying inside
$\mathcal{O}_{\mathbb{Q}_v}[x]$, where $\mathcal{O}_{\mathbb{Q}_v} = \mathbb{Z}_v$ if $v$ is finite
and $\mathcal{O}_{\mathbb{Q}_v} = [-1,1]$ if $v = \infty$ (note that this selection of $[-1,1]$
instead of $\mathbb{R}$ may be nonstandard, but is useful in our context).  Rather than study the
discriminant of $f$ as a polynomial in its coefficients $\mathbf{c} \in
\mathcal{O}_{\mathbb{Q}_v}^n$, we prefer to study the discriminant by means of its simpler
expression in terms of the roots of $f$.  These roots may be thought of as an $n$-tuple
$\pmb{\lambda} \in \overline{\mathbb{Q}_v}^n$ for a fixed choice of algebraic closure
$\overline{\mathbb{Q}_v}$, but the locus of points inside $\overline{\mathbb{Q}_v}^n$ actually
arising as roots of polynomials in $\mathcal{O}_{\mathbb{Q}_v}[x]$ appears difficult to access
directly.  Thus, we pass through an intermediate space, the \'etale algebra associated with $f$.

Let $\mathcal{P}_n(\mathcal{O}_{\mathbb{Q}_v})$ be the space of monic, degree $n$ polynomials with
coefficients in $\mathcal{O}_{\mathbb{Q}_v}$.  Given a separable polynomial $f \in
\mathcal{P}_n(\mathcal{O}_{\mathbb{Q}_v})$, the quotient $\mathbb{Q}_v[x] / f$ is a degree $n$
\'etale algebra $K_v$ over $\mathbb{Q}_v$.  This \'etale algebra is equipped with a natural element
(namely, the image of $x$), and, conversely, given an element $\alpha \in K_v$, the polynomial
$\mathrm{Nm}(x-\alpha)$ will be associated with $\alpha$ under this map.  Now, for any $v$, there
are only finitely many degree $n$ \'etale algebras $K_v$ over $\mathbb{Q}_v$.  Thus, the image of
the map $f \mapsto x \in \mathbb{Q}_v[x]/f$ naturally decomposes as a disjoint union over the
different \'etale algebras.  Moreover, if $v$ is finite, then the image of $f$ in fact lands in
$\mathcal{O}_{K_v}$, the ring of integers of $K_v$.  If $v = \infty$, then we define
$\mathcal{O}_{K_v}$ to be the compact closure of the image.

Given $\alpha \in \mathcal{O}_{K_v}$, the roots of the associated polynomial $f_\alpha = \mathrm{Nm}(x-\alpha)$ are easily determined.  Explicitly, if we write $K_v = \oplus_{i=1}^r F_i$ with each $F_i$ a (field) extension of $\mathbb{Q}_v$, then any $\alpha \in \mathcal{O}_{K_v}$ may be expressed as $ \alpha = (\alpha_1,\dots,\alpha_r)$ with each $\alpha_i \in F_i$.  The roots of $f_\alpha$ are then the images of each $\alpha_i$ under the $[F_i : \mathbb{Q}_v]$ embeddings of $F_i$ into our fixed choice $\overline{\mathbb{Q}_v}$.  Thus, while we are in principle interested in performing computations in the space of roots $\pmb{\lambda} \in \overline{\mathbb{Q}_v}^n$, in practice, we instead perform these computations inside \'etale algebras.

In carrying out the computations to follow, we find it convenient to fix a coordinatization of $\mathcal{O}_{K_v}$ when $v$ is finite.  In particular, if $K_v = \oplus_{i=1}^r F_i$ as above, then by choosing an integral basis $\omega_{i,1},\dots,\omega_{i,[F_i:\mathbb{Q}_v]}$ for each $F_i$, the collection $\omega_{1,1},\dots,\omega_{r,[F_r:\mathbb{Q}_v]}$ is an integral basis for $K_v$. This identifies $\mathcal{O}_{K_v}$ with $\mathcal{O}_{\mathbb{Q}_v}^n$, whose generic element we denote $\mathbf{a}$.  The transformation from the coordinates $\mathbf{a}$ to the roots $\pmb{\lambda}$ is linear, with block diagonal matrix $M_{K_v} = \oplus_{i=1}^r M_{F_i}$, where
    \[
        M_{F_i} = \begin{pmatrix} \iota_{1}(\omega_{i,1}) & \dots & \iota_{1}(\omega_{i,[F_i:\mathbb{Q}_v]}) \\
            \vdots & & \vdots \\
            \iota_{[F_i:\mathbb{Q}_v]}(\omega_{i,1}) & \dots & \iota_{[F_i:\mathbb{Q}_v]}(\omega_{i,[F_i:\mathbb{Q}_v]}) \end{pmatrix},
    \]
with $\iota_1,\dots,\iota_{[F_i:\mathbb{Q}_v]}$ denoting the embeddings of $F_i$ into
$\overline{\mathbb{Q}_v}$.  Notice that $\lvert \det M_{F_i} \rvert_v = \lvert \mathrm{disc} (F_i)
\rvert_v^{1/2}$ by definition of the discriminant, hence $\lvert \det M_{K_v}\rvert_v = \lvert
\mathrm{disc} (K_v) \rvert_v^{1/2}$.

Closing the loop, the roots $\pmb{\lambda}$ of a polynomial $f \in \mathcal{P}_n(\mathcal{O}_{\mathbb{Q}_v})$ naturally determine its coefficients $\mathbf{c}$ by means of the elementary symmetric polynomials.   Namely, given the roots $\pmb{\lambda}=(\lambda_1, \dots, \lambda_n) \in
\overline{\mathbb{Q}_v}$ of a monic polynomial $f$, define the change of variable $\pmb{\lambda}$ to $\pmb{\sigma}=(\sigma_1,\cdots, \sigma_n) \in \overline{\mathbb{Q}_v}$ via
\begin{equation*}
\sigma_i(\pmb{\lambda})
=
\sum_{\substack{S\subset \{1, \cdots, n \} \\ \lvert S \rvert = i}} \prod_{j\in S} \lambda_j
\end{equation*}
for $i=1,\dots,n$.  This change of variable yields the polynomial $f(x)=x^n-\sigma_1 x^{n-1}+\cdots (-1)^k\sigma_k x^{n-k}
+\cdots +  (-1)^n\sigma_n$.  Each $\sigma_i$ is the $i$-th elementary symmetric polynomial in $\lambda_i$.

Figure \ref{fig:regime-change} summarizes the different spaces, maps between them, and variable naming conventions we adopt.

\begin{figure}[h]
\begin{tikzcd}
&  & \begin{array}{c} \text{\'Etale algebras} \\ \displaystyle \coprod_{K_v/\mathbb{Q}_v \text{ deg $n$}} \mathcal{O}_{K_v} \ni \alpha \end{array} \arrow[lldd, "\phi \colon \alpha \mapsto \mathrm{Nm}(x-\alpha)=:f_\alpha", bend left] \arrow[rrdd] &  & \begin{array}{c} \text{Coordinates ($v=p$)} \\ \displaystyle \coprod_{K_v/\mathbb{Q}_v \text{ deg $n$}} \mathcal{O}_{\mathbb{Q}_v}^n \ni \mathbf{a} \end{array} \arrow[ll, "\sim"'] \arrow[dd, "\coprod M_{K_v}"]\\
&  &  &  &  \\
\begin{array}{c} \text{Polynomials} \\ \mathcal{P}_n(\mathcal{O}_{\mathbb{Q}_v}) \ni f \end{array} \arrow[rruu, "{f \mapsto x \in \mathbb{Q}_v[x]/f}", dashed, bend left] &  &  &  & \begin{array}{c} \text{Roots} \\ \overline{\mathbb{Q}_v}^n \ni \pmb{\lambda} \end{array} \arrow[llll, "{\pmb{\lambda} \mapsto \prod(x-\lambda_i) = \sum (-1)^i \sigma_i(\pmb{\lambda}) x^{n-i} }", bend left] \\
& & & & \\
\begin{array}{c} \text{Coefficients} \\ \mathcal{O}_{\mathbb{Q}_v}^n \ni \mathbf{c} \end{array} \arrow[uu, "\mathbf{c} \mapsto f_\mathbf{c}"] &&&&
\end{tikzcd}

\begin{caption}
    { \label{fig:regime-change} Maps between spaces of polynomials, \'etale algebras, and roots.  The dashed arrow indicates this map is defined for separable polynomials.}
\end{caption}
\end{figure}

At this stage, it only remains to discuss the change of measure between the various spaces, in particular between polynomials and \'etale algebras.  First, the space $\mathcal{P}_n(\mathcal{O}_{\mathbb{Q}_v})$ is equipped with a natural Haar/Lebesgue measure $\nu$ by means of the coefficient isomorphism $\mathcal{O}_{\mathbb{Q}_v}^n \to \mathcal{P}_n(\mathcal{O}_{\mathbb{Q}_v})$; if $v$ is finite, we normalize the measure $\nu$ so that $\nu(\mathcal{P}_n(\mathcal{O}_{\mathbb{Q}_v})) = 1$.  Each \'etale algebra $K_v$ is also equipped with a natural Haar/Lebesgue measure that we denote $\mu$, normalized if $v$ is finite so that $\mu(\mathcal{O}_{K_v}) = 1$.  The fundamental result we use is due to Serre \cite[Lemma~3]{Serre} and Shankar--Tsimerman \cite[Lemma~2.2]{ShankarTsimerman-Heuristics}.

\begin{lemma}
    With notation as above, we have $\phi^* \nu = \lvert \mathrm{disc}(K_v) \rvert_v^{1/2} \lvert
    \mathrm{disc}(f_\alpha) \rvert_v^{1/2} \mu$.  In particular, for any $\nu$-integrable function $\psi$ on $\mathcal{P}_n(\mathcal{O}_{\mathbb{Q}_v})$, we have
        \[
            \int_{\mathcal{P}_n(\mathcal{O}_{\mathbb{Q}_v})} \psi(f) \, d\nu
                = \sum_{K_v/\mathbb{Q}_v \text{ deg. n}} \frac{\lvert \mathrm{disc}(K_v)
                \rvert_v^{1/2}}{\lvert \mathrm{Aut}(K_v) \rvert} \int_{\mathcal{O}_{K_v}} \lvert
                \mathrm{disc}(f_\alpha) \rvert_v^{1/2} \psi(f_\alpha) \, d\mu.
        \]
\end{lemma}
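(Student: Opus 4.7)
The plan is to compute the Jacobian of $\phi$ by factoring it as the composition $\alpha \mapsto \mathbf{a} \mapsto \pmb{\lambda} \mapsto \mathbf{c}$ depicted in Figure~\ref{fig:regime-change}, and to verify the identity on the open dense locus $U \subset \coprod_{K_v} \mathcal{O}_{K_v}$ on which $\alpha$ generates $K_v$ as a $\mathbb{Q}_v$-algebra and $f_\alpha$ is separable. Once the identity is established on $U$, the remaining verification is that its complement is $\mu$-null so that the formula extends globally.

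For the first link, via the integral basis $\omega_{i,j}$ the map $\mathbf{a} \mapsto \pmb{\lambda}$ is the linear map with matrix $M_{K_v} = \bigoplus_i M_{F_i}$, so its Jacobian has $v$-adic absolute value $|\det M_{K_v}|_v = |\mathrm{disc}(K_v)|_v^{1/2}$, as already noted in the excerpt. For the second link, the elementary-symmetric-polynomial map $\pmb{\lambda} \mapsto ((-1)^i \sigma_i(\pmb{\lambda}))_i$ has Jacobian matrix which, up to sign and a reordering of columns, is the transposed Vandermonde in $\lambda_1,\ldots,\lambda_n$; its determinant is therefore $\pm \prod_{i<j}(\lambda_i - \lambda_j)$, whose $v$-adic absolute value is $|\mathrm{disc}(f_\alpha)|_v^{1/2}$ by the classical root expression for the discriminant. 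Composing the two steps yields the pointwise Jacobian identity $\phi^* \nu = |\mathrm{disc}(K_v)|_v^{1/2}|\mathrm{disc}(f_\alpha)|_v^{1/2}\mu$ on $U$, modulo the multiplicity of $\phi$.

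To extract the stated integral formula I would then track this multiplicity. Given $\alpha \in U$ with $f = f_\alpha$, the set of $\beta \in K_v$ with $f_\beta = f$ is in bijection with the set of $\mathbb{Q}_v$-algebra homomorphisms $\mathbb{Q}_v[x]/f \to K_v$; since $\alpha$ generates $K_v$ this source is isomorphic to $K_v$ itself, so the fiber is a torsor under $\mathrm{Aut}_{\mathbb{Q}_v}(K_v)$ of cardinality $|\mathrm{Aut}(K_v)|$. Dividing each $K_v$-contribution by this factor corrects the overcounting and yields the claimed formula after summing over étale algebras.

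The main obstacle I anticipate is the book-keeping rather than any deep input: I need to check that the complement of $U$ has measure zero so that the pointwise Jacobian identity extends to all of $\coprod_{K_v} \mathcal{O}_{K_v}$. The inseparable locus is the zero set of $\mathrm{disc}(f_\alpha)$ viewed as a polynomial in $\mathbf{a}$, and the non-generating locus is a finite union of proper $\mathbb{Q}_v$-subalgebras of $K_v$, both of which are real-analytic (respectively $v$-adic-analytic) subvarieties of strictly smaller dimension and hence $\mu$-null. The archimedean convention $\mathcal{O}_{\mathbb{Q}_\infty} = [-1,1]$ does not alter the Jacobian calculation, which is local, but one should note that nothing in the argument requires $\alpha$ to lie in the interior, so the usual change-of-variables formula on open sets suffices.
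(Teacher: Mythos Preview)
The paper does not supply its own proof of this lemma: it is stated immediately after the sentence ``The fundamental result we use is due to Serre \cite[Lemma~3]{Serre} and Shankar--Tsimerman \cite[Lemma~2.2]{ShankarTsimerman-Heuristics}'' and is treated as a black-box citation, with no proof environment following. Your sketch is essentially the standard argument behind those references---factor $\phi$ through the root map, observe that the Jacobian of $\pmb{\lambda}\mapsto\mathbf{c}$ is the Vandermonde $\prod_{i<j}(\lambda_i-\lambda_j)$ and that the Jacobian of $\mathbf{a}\mapsto\pmb{\lambda}$ is $\det M_{K_v}$, and correct for the $\lvert\mathrm{Aut}(K_v)\rvert$-fold covering---and it is correct. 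One small caveat: the coordinate chain $\mathbf{a}\mapsto\pmb{\lambda}$ via $M_{K_v}$ is set up in the paper only for finite $v$ (see the ``$v=p$'' label in Figure~\ref{fig:regime-change}), so at $v=\infty$ you should instead work directly with a real basis of $K_\infty\simeq\mathbb{R}^{r_1}\times\mathbb{C}^{r_2}$; the Jacobian computation goes through the same way after separating real and imaginary parts in the complex factors.
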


\section{The density of polynomials with small discriminant}\label{sec:small-disc}

We begin by proving the following proposition on the density of polynomials over some completion $\mathbb{Q}_v$ of $\mathbb{Q}$ with small discriminant.  This result in the case that $\mathbb{Q}_v = \mathbb{R}$ will be the key ingredient in the proof of Corollary \ref{cor:small-poly-disc}, and the ideas behind the proof in the case that $\mathbb{Q}_v$ is non-archimedean will be the key ingredient in the proof of Theorem \ref{thm:small-radical-index}.

\begin{proposition}\label{prop:etale-density}
Let $n \geq 2$ and let $\mathbb{Q}_v$ be a completion of $\mathbb{Q}$.  Let $\lvert \cdot \rvert_v$ be the associated absolute value and let $\mu_v$ be the associated Haar measure on $\mathbb{Q}_v$, normalized in the case that $v$ is finite so that the total measure of $\mathbb{Z}_v$ is $1$ and so that $\mu_v$ agrees with usual Lebesgue measure in the case that $v$ is infinite.  For $\mathbf{c} \in \mathbb{Q}_v^n$, let $f_\mathbf{c}(x) := x^n + c_1x^{n-1} + \dots + c_n$.

Then for any $\delta \in (0,1)$, there holds
\begin{equation}\label{bound:small_disc}
\nu( \{ \mathbf{c} \in \mathbb{Q}_v^n : \lvert c_i \rvert_v \leq 1 \text{ for all } i \text{ and }
\lvert \disc(f_\mathbf{c}(x)) \rvert_v \leq \delta \})
\ll_n \delta^{\frac{1}{2}+\frac{1}{n}},
\end{equation}
where $\nu$ denotes the product measure on $\mathbb{Q}_v^n$.
\end{proposition}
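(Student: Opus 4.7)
The plan is to apply the change-of-measure lemma of Serre and Shankar--Tsimerman stated at the end of Section~\ref{change of variables}, with $\psi$ the indicator of the set $B := \{\mathbf{c} \in \mathbb{Q}_v^n : \lvert c_i \rvert_v \leq 1,\ \lvert \disc(f_\mathbf{c}) \rvert_v \leq \delta\}$ appearing on the left-hand side of \eqref{bound:small_disc}. Because any $\alpha \in \mathcal{O}_{K_v}$ automatically satisfies $\lvert c_i(f_\alpha) \rvert_v \leq 1$, the formula becomes
\[
    \nu(B) = \sum_{K_v/\mathbb{Q}_v \text{ deg. } n} \frac{\lvert \disc(K_v) \rvert_v^{1/2}}{\lvert \mathrm{Aut}(K_v) \rvert} \int_{\substack{\alpha \in \mathcal{O}_{K_v} \\ \lvert \disc(f_\alpha) \rvert_v \leq \delta}} \lvert \disc(f_\alpha) \rvert_v^{1/2}\, d\mu.
\]
I would then pass to root coordinates $\pmb{\lambda} = M_{K_v}\mathbf{a}$ on each summand, using $\lvert \det M_{K_v} \rvert_v = \lvert \disc(K_v) \rvert_v^{1/2}$ and $\lvert \disc(f_\alpha) \rvert_v = \lvert V(\pmb{\lambda}) \rvert_v^2$, where $V(\pmb{\lambda}) = \prod_{i<j}(\lambda_i - \lambda_j)$ is the Vandermonde. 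The two powers of $\lvert \disc(K_v) \rvert_v^{1/2}$ cancel exactly, reducing \eqref{bound:small_disc} to
\[
    \sum_{K_v} \frac{1}{\lvert \mathrm{Aut}(K_v) \rvert} \int_{\substack{\pmb{\lambda} \in \Lambda_{K_v} \\ \lvert V(\pmb{\lambda}) \rvert_v \leq \delta^{1/2}}} \lvert V(\pmb{\lambda}) \rvert_v\, d\pmb{\lambda}\ \ll_n\ \delta^{\frac{1}{2} + \frac{1}{n}},
\]
where $\Lambda_{K_v}$ is the image of $\mathcal{O}_{K_v}$ in the root coordinates and is contained in a $v$-adically bounded region of $\overline{\mathbb{Q}_v}^n$ (archimedeanly by Cauchy's root bound, non-archimedeanly trivially).

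Since the integrand is pointwise bounded by $\delta^{1/2}$, the proposition reduces to establishing a Vandermonde sublevel-set estimate of the form
\[
    \mathrm{vol}\bigl(\{\pmb{\lambda} \in \Lambda_{K_v} : \lvert V(\pmb{\lambda}) \rvert_v \leq \epsilon\}\bigr)\ \ll_n\ \epsilon^{2/n}
\]
and applying it with $\epsilon = \delta^{1/2}$; summing over the $O_n(1)$ \'etale algebras $K_v$ of degree $n$ over $\mathbb{Q}_v$ then contributes only an $n$-dependent constant.

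To prove this Vandermonde estimate I would use a dyadic decomposition of $\Lambda_{K_v}$ by the \emph{cluster structure} of the roots: partition $\{1,\dots,n\}$ into $k$ blocks $S_1, \dots, S_k$ of sizes $n_1, \dots, n_k$ with $\sum_j n_j = n$, and associate to each cell the condition that the roots within each block $S_j$ are mutually within $v$-adic distance $\rho_j$ while roots in different blocks are at $v$-adic distance $\asymp_n 1$. On such a cell one has
\[
    \lvert V(\pmb{\lambda}) \rvert_v\ \asymp_n\ \prod_{j=1}^k \rho_j^{n_j(n_j - 1)/2}, \qquad \mathrm{vol}(\text{cell})\ \asymp_n\ \prod_{j=1}^k \rho_j^{n_j - 1},
\]
the volume count coming from fixing one root per block to remove the translation freedom and parameterizing the $n_j - 1$ remaining offsets in the ambient Haar measure on $\Lambda_{K_v}$. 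An elementary Lagrange-multiplier argument then shows that $\prod_j \rho_j^{n_j - 1}$ subject to $\prod_j \rho_j^{n_j(n_j - 1)/2} \leq \epsilon$ is maximized by the single-cluster configuration $k = 1$, $n_1 = n$, $\rho_1 = \epsilon^{2/(n(n-1))}$, yielding the extremal value $\epsilon^{2/n}$. The main obstacle is the archimedean bookkeeping for clusters containing complex conjugate pairs: the conjugation symmetry forces the cluster center to lie on the real axis, which keeps the volume exponent uniformly equal to $n_j - 1$ once one passes to a real parameterization, allowing the argument to proceed signature by signature.
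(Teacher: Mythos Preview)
Your overall framework matches the paper's exactly: apply the Serre/Shankar--Tsimerman change-of-measure to pass to \'etale algebras, bound $\lvert \disc(f_\alpha)\rvert_v^{1/2} \leq \delta^{1/2}$ on the support to extract the factor $\delta^{1/2}$, and reduce everything to a sublevel-set bound for the Vandermonde in root coordinates, with the Jacobian $\lvert \disc(K_v)\rvert_v^{1/2}$ cancelling against the prefactor and the sum over $K_v$ contributing only $O_n(1)$. The divergence is purely in how the remaining estimate $\mu\bigl(\{\lvert \disc(f_\alpha)\rvert_v \leq \delta\}\bigr) \ll_n \delta^{1/n}$ is obtained.

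The paper sidesteps any cluster analysis via a one-line pigeonhole: writing $\lvert V(\pmb{\lambda})\rvert_v^2 = \prod_{i=1}^n \prod_{j\neq i}\lvert\lambda_i - \lambda_j\rvert_v \leq \delta$, some index $i$ must satisfy $\prod_{j\neq i}\lvert\lambda_i - \lambda_j\rvert_v \leq \delta^{1/n}$. Fixing $\lambda_i$ and shifting the remaining coordinates by $\lambda_j \mapsto \lambda_i - \lambda_j$, this becomes the product sublevel $\prod_{j\neq i}\lvert\beta_j\rvert_v \leq \delta^{1/n}$ in a bounded box, whose measure is $O_n(\delta^{1/n})$; summing over the $n$ choices of $i$ and integrating out $\lambda_i$ finishes the argument. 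This replaces your cluster decomposition and Lagrange-multiplier optimization entirely.

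As written, your cluster scheme has a genuine gap: the cells indexed by a single-level partition with ``cross-block distances $\asymp_n 1$'' do not cover $\Lambda_{K_v}$, and on those cells the asserted two-sided estimate $\lvert V\rvert_v \asymp \prod_j \rho_j^{n_j(n_j-1)/2}$ holds only as an upper bound. For instance, $(\lambda_1,\lambda_2,\lambda_3) = (0,\rho^2,\rho)$ with $\rho$ small has all pairwise distances $\leq \rho$ (hence sits in a single block at scale $\rho$, where you would predict $\lvert V\rvert \asymp \rho^3$), yet $\lvert V\rvert \asymp \rho^4$; and it cannot be placed in any two-block cell because the cross-block distances are $\asymp \rho$, not $\asymp 1$. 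A hierarchical multiscale cluster tree would repair this, but the pigeonhole shortcut above makes that machinery unnecessary.
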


\begin{proof}
Let $\mathbf{1}_{\delta}(f)$ be the characteristic function for the set of polynomials where $\lvert
\mathrm{Disc}(f) \rvert_v \leq \delta$.

Then applying the change of variables discussed in Section \ref{change of variables}, we find
\begin{align*}
\nu( \{ \mathbf{c} \in \mathbb{Q}_v^n :& \lvert c_i \rvert_v \leq 1 \text{ for all } i \text{ and }
\lvert \disc(f_\mathbf{c}(x)) \rvert_v \leq \delta \})
\\&=
\sum_{[{K_v}:\mathbb{Q}_v]=n} \frac{\lvert \Disc({K_v}) \rvert_{v}^{1/2}}{\lvert \mathrm{Aut}({K_v})
\rvert} \int_{\mathcal{O}_{{K_v}}} \lvert \Disc(f_\alpha) \rvert_v^{1/2} \mathbf{1}_{\delta}(f_\alpha) \dd{\mu}(\alpha)
\\ &\leq
{\delta^{1/2}} \sum_{[{K_v}:\mathbb{Q}_v]=n} \frac{\lvert \Disc({K_v}) \rvert_{v}^{1/2}}{\lvert
\mathrm{Aut}({K_v}) \rvert} \int_{\mathcal{O}_{{K_v}}} \mathbf{1}_{\delta}(f_\alpha) \dd{\mu}(\alpha)
,\end{align*}
where $f_\alpha$ denotes the characteristic polynomial of $\alpha$, and where,
in the inequality in the second line, we used the bound $\lvert \Disc(f_\alpha) \rvert_v = \lvert \Disc(\alpha)\rvert_v \leq \delta$, valid for any $\alpha$ for which $\mathbf{1}_{\delta}(f_\alpha) \neq 0$.  Note in particular that any polynomial with a repeated root has discriminant $0$, but that the set of such polynomials has measure $0$ and thus does not affect the integral. \

We now estimate the integral, beginning with the case where $K_v \simeq \mathbb{Q}_v^n$ is the
totally split algebra.  In this case, $\lvert \mathrm{Disc}(\alpha) \rvert_v = \prod_{i < j} \lvert
\alpha_i-\alpha_j \rvert_v^2$, where $\alpha_i, \alpha_j$ are the roots of $f_\alpha$.  Consequently, for any point in the support of $\mathbf{1}_\delta$, we have
\[
\prod_{i=1}^n \prod_{j \ne i} \lvert \alpha_i-\alpha_j \rvert_v \leq \delta
.\]
The pigeonhole principle then implies that there must be some $i$ for which
\[
    \prod_{j \ne i} \lvert \alpha_i - \alpha_j \rvert_v \leq \delta^{1/n}.
\]
If we let $\alpha_j^\prime = \alpha_i-\alpha_j$ for each $j\neq i$, it thus follows that $\prod_{j
\ne i} \lvert \alpha_j^\prime \rvert_v \leq \delta^{1/n}$.  Since $\mathcal{O}_{K_v}$ is compact, the measure of points satisfying this condition is $O_n(\delta^{1/n})$, independent of $\alpha_i$.  We therefore conclude in this case that
\[
\int_{\mathcal{O}_{K_v}} \mathbf{1}_\delta(f_\alpha)\dd\mu(\alpha)
\ll_n \delta^{\frac{1}{n}}.
\]

We now turn to the case that $K_v$ is not the totally split algebra.  The idea in this case is substantially the same, but  requires the notation from Section 3.  The discriminant $\mathrm{Disc}(\alpha)$ satisfies
\[
\lvert \mathrm{Disc}(\alpha) \rvert_v = \prod_{i=1}^n\prod_{j \ne i} \lvert \lambda_i-\lambda_j
\rvert_v.
\]
Proceeding as in the totally split case, we conclude that for $\alpha$ in the support of $\mathbf{1}_\delta$, there must be some $i$ for which
\begin{equation}\label{bound:php}
\prod_{j\ne i} \lvert \lambda_i-\lambda_j \rvert_v \leq \delta^{\frac{1}{n}}.
\end{equation}
Accounting for the determinant of the matrix $M_{K_v}$ sending the coordinates $\mathbf{a}$ to the roots $\pmb{\lambda}=(\lambda_1,\dots,\lambda_n)$, we conclude that
\[
\int_{\mathcal{O}_{K_v}} \mathbf{1}_{\delta}(f_\alpha) \dd \mu(\alpha)
\ll_n \lvert \mathrm{Disc}(K_v) \rvert_v^{-1/2} \delta^{\frac{1}{n}}.
\]
We therefore find that
\[
{\delta^{1/2}} \sum_{[{K_v}:\mathbb{Q}_v]=n} \frac{\lvert \Disc({K_v}) \rvert_{v}^{1/2}}{\lvert
\mathrm{Aut}({K_v}) \rvert} \int_{\mathcal{O}_{{K_v}}} \mathbf{1}_{\delta}(f_\alpha) \dd{\mu}(\alpha)
\ll_n \delta^{\frac{1}{2}+\frac{1}{n}} \sum_{[{K_v}:\mathbb{Q}_v]=n} \frac{1}{\lvert
\mathrm{Aut}({K_v}) \rvert}
\ll_n \delta^{\frac{1}{2}+\frac{1}{n}},
\]
since the summation may be bounded in terms of the number of \'etale algebras of degree $n$, which in turn may be bounded solely in terms of $n$.
\end{proof}

\begin{remark}
The bound \eqref{bound:small_disc} is sharp for an infinite sequence of $\delta$ tending to 0. In particular, one can realize a lower bound of magnitude $\gg \delta^{\frac{1}{2}+\frac{1}{n}}$ for all $\delta$ when $v$ is an archimedean valuation.
One can also realize this lower bound for the nonarchimedean totally split \'etale algebras $K_v \simeq \mathbb{Q}_v^n$ and $\delta = p^{-k}$ when $k = mn(n-1)$ for some $k$ and $m \geq 1$ since the product in \eqref{bound:php} has $n(n-1)$ factors in these cases, and therefore can be realized in these cases.
For many other $\delta$, the bound can be improved using the discrete nature of the valuation group. In our optimization process below, we have no freedom in choosing $\delta$ and cannot exploit such improvements. Therefore, we rely on the upper bound \eqref{bound:small_disc}.
\end{remark}

We now use Proposition \ref{prop:etale-density} to bound the number of integral polynomials with small archimedean discriminant.

\begin{lemma}\label{lem:small-poly-disc}
Let $n \geq 3$, $Y \geq 1$, and $H$ sufficiently large in terms of $n$.
Then the number of polynomials $f(x) \in \mathbb{Z}[x]$ of the form
$f(x) = x^n + c_1 x^{n-1} + \dots + c_n$ with $\lvert c_i \rvert \leq H^i$
and $\mathrm{disc}(f) \leq H^{n^2-n} / Y$ is
$O_n(H^{\frac{n^2+n}{2}}/Y^{\frac{1}{2}+\frac{1}{n}} + H^{\frac{n^2+n}{2}-1})$.
\end{lemma}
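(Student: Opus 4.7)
The strategy is to use Proposition~\ref{prop:etale-density} at the archimedean place to obtain a volume bound on the region of admissible real coefficients, and then to pass from this volume bound to the integer point count via a Davenport-type lattice point counting lemma.

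\medskip

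First I would reduce to a volume estimate. Let
\[
S_Y := \{\mathbf{c} \in \mathbb{R}^n : \lvert c_i\rvert \leq H^i,\ \lvert\disc(f_\mathbf{c})\rvert \leq H^{n^2-n}/Y\}.
\]
The case $Y\leq 1$ is trivial (bounded by the total count $\ll_n H^{(n^2+n)/2}$), so assume $Y>1$. Rescale by setting $c_i = H^i c_i'$ and write $g(x)=x^n + c_1'x^{n-1}+\cdots + c_n'$; since the roots of $g$ are the roots of $f_\mathbf{c}$ divided by $H$, one has $\disc(g)=H^{-n(n-1)}\disc(f_\mathbf{c})$. Thus $S_Y$ is in bijection with $\{|c_i'|\leq 1,\ |\disc(g)|\leq 1/Y\}$, which by Proposition~\ref{prop:etale-density} applied at $v=\infty$ with $\delta=1/Y$ has Lebesgue measure $\ll_n Y^{-1/2-1/n}$. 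Multiplying by the Jacobian $H^{(n^2+n)/2}$ of the inverse rescaling yields
\[
\mathrm{Vol}(S_Y) \ll_n H^{(n^2+n)/2}\,Y^{-(1/2+1/n)}.
\]

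\medskip

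Next I would convert the volume bound to an integer-point count. Since $S_Y$ is bounded and semi-algebraic of complexity depending only on $n$ (its defining inequalities are polynomial of degree at most $2n-2$ in $\mathbf{c}$), a Davenport-type counting lemma gives
\[
\bigl\lvert\#(S_Y \cap \mathbb{Z}^n) - \mathrm{Vol}(S_Y)\bigr\rvert \ll_n \sum_{j=1}^{n-1}\max_{\pi_j}\mathrm{Vol}_j(\pi_j(S_Y)),
\]
where $\pi_j$ ranges over projections to $j$-dimensional coordinate subspaces. Each $\pi_j(S_Y)$ is contained in the corresponding projection of the box $\{|c_i|\leq H^i\}$, and the dominant projection omits the smallest coordinate $c_1$ (so $j=n-1$), contributing $\prod_{i=2}^{n}(2H^i) \ll_n H^{(n^2+n)/2-1}$; all lower-dimensional projections are strictly smaller. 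Combining the two estimates yields the claimed bound.

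\medskip

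The main technical point is to check that the hypotheses of the counting lemma apply uniformly in $Y$. Because $\disc(f)$ has degree at most $2n-2$ in $\mathbf{c}$, every line parallel to a coordinate axis meets the level set $\{|\disc(f)|=\tau\}$ in $O_n(1)$ intervals, and coordinate projections of $S_Y$ are again bounded semi-algebraic sets of the same complexity; this ensures that the implicit constants in the counting lemma depend only on $n$, not on the discriminant threshold.
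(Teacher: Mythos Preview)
Your proposal is correct and follows essentially the same approach as the paper: both reduce to a volume estimate via the anisotropic rescaling $c_i = H^i c_i'$, invoke Proposition~\ref{prop:etale-density} at $v=\infty$ with $\delta = 1/Y$, and then apply Davenport's lattice-point lemma, bounding the projection error by forgetting the discriminant condition and dropping the $c_1$-coordinate. Your additional remarks on the semi-algebraic complexity being uniform in $Y$ are a welcome elaboration but do not change the argument.
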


The key idea behind Lemma \ref{lem:small-poly-disc} is that the coefficients of the polynomials $f(x)$ with small discriminant must lie in a compact region in $\mathbb{R}^n$ with small volume, and thus we should expect there to be relatively few polynomials whose coefficients lie in this region.
This is made rigorous by means of the Lipschitz principle from \cite{Davenport}:

\begin{lemma}[Davenport]\label{lem:davenport}
If $\Omega \subseteq \mathbb{R}^n$ is a  compact semialgebraic region (i.e., cut out by algebraic inequalities), then the number of lattice points in the intersection satisfies the bound
\begin{equation}\label{eq:counttovolume}
\mathbb{Z}^n \cap \Omega
= \mathrm{vol}(\Omega) + O(\max_{\pi} \mathrm{vol}\big(\pi(\Omega))\big),
\end{equation}
where the maximum runs over the projections $\pi$ of $\mathbb{R}^n$ onto its various coordinate hyperplanes (i.e., the regions in $\mathbb{R}^d$ for $d<n$ obtained by ``forgetting'' $n-d$ of the coordinates).
The implicit constants depend only on the dimension $n$ and the degrees of the equations defining $\Omega$.
\end{lemma}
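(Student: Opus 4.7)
The plan is to induct on $n$, using the bounded-complexity semialgebraic structure through the consequence that any axis-parallel line meets $\Omega$ in at most $C = O_{n,d}(1)$ intervals, where $d$ bounds the degrees of the defining polynomials; this bound is inherited by every coordinate projection of $\Omega$. The base case $n=1$ is immediate: $\Omega \subset \mathbb{R}$ is a disjoint union of $O_d(1)$ closed intervals, so $\#(\Omega \cap \mathbb{Z}) = \mathrm{vol}(\Omega) + O_d(1)$, which matches the claim since the unique proper coordinate projection $\mathbb{R} \to \mathbb{R}^0$ contributes volume at most $1$.

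For the inductive step, for each $k \in \mathbb{Z}$ let $\Omega_k := \{x' \in \mathbb{R}^{n-1} : (x',k) \in \Omega\}$, which is again compact semialgebraic of complexity bounded in terms of that of $\Omega$. Decomposing by the last coordinate and applying the inductive hypothesis to each slice yields
\[
\#(\Omega \cap \mathbb{Z}^n) = \sum_{k \in \mathbb{Z}} \mathrm{vol}(\Omega_k) + O\Bigl( \sum_{k \in \mathbb{Z}} \max_{\pi'} \mathrm{vol}(\pi'(\Omega_k)) \Bigr),
\]
where $\pi'$ ranges over coordinate projections of $\mathbb{R}^{n-1}$ onto lower-dimensional coordinate subspaces. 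For the main term, I would use a pointwise sum-vs-integral comparison: for each $x' \in \mathbb{R}^{n-1}$, the fiber $\{t : (x',t) \in \Omega\}$ is a union of at most $C$ intervals, so $\sum_k \mathbf{1}[(x',k) \in \Omega]$ differs from $\int \mathbf{1}[(x',t) \in \Omega]\,dt$ by $O(1)\cdot \mathbf{1}[x' \in \pi_n(\Omega)]$, where $\pi_n$ forgets $x_n$. Integrating over $x'$ bounds the discrepancy by $O(\mathrm{vol}(\pi_n(\Omega)))$, already of the admissible form. For the error sum, I would fix a projection $\pi' \colon \mathbb{R}^{n-1} \to \mathbb{R}^j$ and lift it to $\tilde\pi \colon \mathbb{R}^n \to \mathbb{R}^{j+1}$ retaining $x_n$; then $\pi'(\Omega_k)$ is exactly the height-$k$ slice of $\tilde\pi(\Omega)$, so the same sum-vs-integral comparison applied to $\tilde\pi(\Omega)$ gives
\[
\sum_{k} \mathrm{vol}(\pi'(\Omega_k)) = \mathrm{vol}(\tilde\pi(\Omega)) + O\bigl( \mathrm{vol}(\pi_n(\tilde\pi(\Omega))) \bigr),
\]
both of whose terms are volumes of coordinate projections of $\Omega$ onto dimensions strictly less than $n$. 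Since there are only $O_n(1)$ choices of $\pi'$, the cumulative error is controlled by $\max_\pi \mathrm{vol}(\pi(\Omega))$, closing the induction.

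The main obstacle is ensuring that summing the inductive slice errors over integer heights $k$ does not accumulate a spurious factor equal to the $x_n$-extent of $\Omega$: a trivial bound on each slice followed by summation would be too wasteful. The resolution, described above, is that each projected slice $\pi'(\Omega_k)$ is itself a horizontal slice of the coordinate projection $\tilde\pi(\Omega)$, so the same sum-to-integral conversion (applied now in one lower dimension) turns the error sum directly into a coordinate-projection volume of $\Omega$. This bootstrap closes precisely because the statement allows projections onto every strict coordinate subspace, not only codimension-one hyperplanes; otherwise the intermediate volumes $\mathrm{vol}(\tilde\pi(\Omega))$ would not lie in the admissible family.
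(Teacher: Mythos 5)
Your argument is correct in outline, but note that the paper does not prove this lemma at all: it is quoted as a known result of Davenport \cite{Davenport}, so there is no internal proof to compare against. What you have written is essentially a reconstruction of Davenport's original induction on the dimension: slice by the last coordinate, apply the inductive hypothesis to each slice, convert the sum of slice volumes to $\mathrm{vol}(\Omega)$ by a fiberwise sum-versus-integral comparison, and absorb the summed slice errors by observing that $\pi'(\Omega_k)$ is the height-$k$ slice of the coordinate projection $\tilde\pi(\Omega)$, so the same comparison one dimension down turns that sum into volumes of lower-dimensional coordinate projections of $\Omega$. The one ingredient you assert rather than justify is that slices and, more importantly, coordinate projections of a compact semialgebraic set are again semialgebraic with complexity (number and degrees of defining polynomials) bounded only in terms of $n$ and the original degrees, so that every axis-parallel line meets them in $O_{n,d}(1)$ intervals; this is where Tarski--Seidenberg (or an elimination-theoretic argument) enters, and it is precisely the step that translates the semialgebraic hypothesis of the lemma as stated here into the interval-count hypotheses on the region and all its projections under which Davenport's inequality is proved. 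With that point made explicit, your induction closes correctly, including the bookkeeping that the max over the $O_n(1)$ projections $\pi'$ can be replaced by a sum and that the resulting terms $\mathrm{vol}(\tilde\pi(\Omega))$ and $\mathrm{vol}(\pi_n(\tilde\pi(\Omega)))$ are volumes of admissible coordinate projections of $\Omega$.
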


\begin{proof}[Proof of Lemma \ref{lem:small-poly-disc}]
Define $\Omega_{H,Y} \subseteq \mathbb{R}^n$ to be
\begin{equation*}
\Omega_{H,Y} :=
\{ (c_1,\dots,c_n) \in \mathbb{R}^n :
\lvert c_i \rvert \leq H^i,
\mathrm{disc}(x^n + c_1 x^{n-1} + \cdots + c_n) \leq H^{n^2-n}/Y
\}.
\end{equation*}
The maximum volume of the coordinate projections of $\Omega_{H,Y}$ is trivially $O_n(H^{\frac{n^2+n}{2}-1})$, coming from forgetting the discriminant condition and using the coordinate projection $(c_1,\dots,c_n) \mapsto (c_2,\dots,c_n)$.
By Lemma \ref{lem:davenport}, the statement thus reduces to showing that $\mathrm{vol}(\Omega_{H,Y}) \ll_n H^{\frac{n^2+n}{2}} / Y^{\frac{1}{2} + \frac{1}{n}}$.
Since the region $\Omega_{H,Y}$ is obtained by an anisotropic dilation we find $\mathrm{vol}(\Omega_{H,Y}) = H^{\frac{n^2+n}{2}} \mathrm{vol}(\Omega_{1,Y})$. Thus it suffices to bound $\mathrm{vol}(\Omega_{1, Y})$.
Taking $\delta=Y^{-1}$ and $\mathbb{Q}_v=\mathbb{R}$ in Proposition~\ref{prop:etale-density}, we immediately deduce that
\begin{equation}
\mathrm{vol}(\Omega_{1, Y}) \ll_n Y^{-\frac{1}{2}-\frac{1}{n}}.
\end{equation}
This completes the proof.
\end{proof}

This lemma implies Corollary~\ref{cor:small-poly-disc}, used in \S\ref{ssec:strategy} to prove our main theorem.
\begin{proof}[Proof of Corollary~\ref{cor:small-poly-disc}]
  Take $Y = H^{2}$ in Lemma~\ref{lem:small-poly-disc}.
\end{proof}

\section{Nonarchimedean Fourier transforms}\label{sec:nonarchimedean}

Recall that if $f(x)$ is irreducible and cuts out the field $K$, then its discriminant satisfies
$\mathrm{disc}(f) = \mathrm{Disc}(K) [\mathcal{O}_K : \mathbb{Z}[\alpha]]^2$,
where $\alpha$ is a root of $f$ over $K$, and where $[\mathcal{O}_K : \mathbb{Z}[\alpha]] =: \idx(f)$.  In this section, we assemble the main technical ingredients that will be used in the proof of Theorem~\ref{thm:small-radical-index}.  This theorem bounds the number of integral polynomials $f$ whose index is large, but for which $\mathrm{rad}(\idx(f))$ is small.  This condition implies that such polynomials will have discriminants divisible by large powers of primes.  We therefore begin with the following simple lemma.

\begin{lemma}\label{lem:discriminant-classes}
Let $n \geq 2$ and $k \geq 1$, and let $p$ be prime.  The set of monic polynomials $f \in \mathbb{Z}[x]$ for which $p^{2k}$ divides the discriminant of $f$ is determined by congruence conditions $\pmod{p^{2k}}$ with relative density $\ll p^{-k-\frac{2k}{n}}$.
\end{lemma}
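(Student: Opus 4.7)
The plan is to observe that Lemma~\ref{lem:discriminant-classes} is an essentially immediate consequence of Proposition~\ref{prop:etale-density} applied at the nonarchimedean place $v=p$ with $\delta = p^{-2k}$, together with a short argument that the condition depends only on coefficients modulo $p^{2k}$.

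For the first claim, I would note that $\disc(f)$ is a polynomial with integer coefficients in $c_1,\dots,c_n$ (the classical formula as a resultant of $f$ with its derivative, or equivalently the determinant of a polynomial Sylvester-type matrix, makes this explicit). Replacing any $c_i$ by $c_i + p^{2k}\mathbb{Z}$ therefore changes $\disc(f)$ by an element of $p^{2k}\mathbb{Z}$, so the condition $p^{2k}\mid \disc(f)$ is preserved. Hence the set of such $f$ is a union of residue classes modulo $p^{2k}$, and its relative density in the $(\mathbb{Z}/p^{2k}\mathbb{Z})^n$-coefficient space equals the normalized Haar measure of
\begin{equation*}
\{\mathbf{c} \in \mathbb{Z}_p^n : \lvert \disc(f_\mathbf{c}) \rvert_p \leq p^{-2k}\} \subset \mathbb{Z}_p^n,
\end{equation*}
where $\nu$ is normalized so $\nu(\mathbb{Z}_p^n) = 1$. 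This is exactly the convention of Proposition~\ref{prop:etale-density}.

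Now I would apply Proposition~\ref{prop:etale-density} at the place $v=p$ with $\delta = p^{-2k}$. The proposition yields
\begin{equation*}
\nu(\{\mathbf{c} \in \mathbb{Z}_p^n : \lvert c_i\rvert_p \leq 1, \, \lvert \disc(f_\mathbf{c})\rvert_p \leq p^{-2k}\}) \ll_n (p^{-2k})^{\frac{1}{2} + \frac{1}{n}} = p^{-k - \frac{2k}{n}},
\end{equation*}
which is exactly the bound claimed in the lemma. Since the condition $\lvert c_i\rvert_p \leq 1$ is automatic on $\mathbb{Z}_p^n$, this density equals the relative density of residue classes modulo $p^{2k}$, and we are done.

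There is essentially no new obstacle here: all of the real work lives in Proposition~\ref{prop:etale-density}, which already packages the pigeonhole argument on root-differences and the integration over \'etale algebras with the Serre/Shankar--Tsimerman change-of-measure. The only thing to double-check is that the normalization of $\nu$ matches: Proposition~\ref{prop:etale-density} normalizes so that $\mathbb{Z}_p^n$ has measure~$1$, which is precisely the normalization needed to translate its measure bound into a statement about the relative density of residue classes modulo $p^{2k}$.
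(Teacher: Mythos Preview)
Your proposal is correct and matches the paper's own proof essentially line for line: the paper also notes that the congruence condition follows because the discriminant is a polynomial in the coefficients, and then invokes Proposition~\ref{prop:etale-density} with $\mathbb{Q}_v=\mathbb{Q}_p$ and $\delta=p^{-2k}$ to obtain the density bound. Your version simply spells out the normalization check in more detail.
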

\begin{proof}
    The fact that this condition is determined from congruence conditions follows from the fact that the discriminant of a polynomial is a polynomial in its coefficients.  The density of these congruence conditions may be determined by computing the Haar measure of the set of monic polynomials in $\mathbb{Z}_p[x]$ satisfying this condition.  Therefore, the claim about the density follows from Proposition~\ref{prop:etale-density} by taking $\mathbb{Q}_v = \mathbb{Q}_p$ and $\delta = p^{-2k}$.
\end{proof}

We now introduce the main object of study in this section.  Let $\mathcal{P}_n(\mathbb{Z}/p^{2k} \mathbb{Z})$ be the set of monic degree $n$ polynomials over $\mathbb{Z}/p^{2k}\mathbb{Z}$, and let $\psi_{p^{2k}}$ be the characteristic function of the subset of polynomials whose discriminant is congruent to $0 \pmod{p^{2k}}$.
For any $\mathbf{u} \in \mathbb{Z}^n$, we define the Fourier transform $\widehat{\psi_{p^{2k}}}(\mathbf{u})$ of $\psi_{p^{2k}}$ by
    \begin{equation}\label{eqn:fourier-def}
        \widehat{\psi_{p^{2k}}}(\mathbf{u})
            := \frac{1}{p^{2kn}} \sum_{\mathbf{c} \in (\mathbb{Z}/p^{2k}\mathbb{Z})^n} \psi_{p^{2k}}(f_\mathbf{c}) \exp\left(\frac{ 2\pi i \langle \mathbf{c},\mathbf{u}\rangle}{p^{2k}} \right),
    \end{equation}
where $f_\mathbf{c}(x) = x^n + c_1 x^{n-1} + \dots + c_n$ and $\langle \cdot,\cdot\rangle$ denotes the standard inner product.

We begin with a series of lemmas that will be used to determine the support of $\widehat{\psi_{p^{2k}}}$.

\begin{lemma}\label{lem:relation}
Write the monic polynomial $f$ as $f(x)=x^n-\sigma_1 x^{n-1}+\cdots (-1)^k\sigma_k x^{n-k}
+\cdots +  (-1)^n\sigma_n$.
For each $1 \leq i \leq n$, define $D_i:=\frac{\partial \disc{f}}{\partial \sigma_i} \in
\mathbb{Z}[\sigma_1, \ldots, \sigma_n]$.
Then the discriminant $\disc(f)$ and partial derivatives $D_i$ satisfy
\begin{enumerate}
\item
For any $1\le r\le r+k\le n$ and $1\le s-k\le s\le n$, we have $\disc(f) \mid (D_r D_s-D_{r+k}D_{s-k})$.
\item
$\sum_{1\le i\le n} D_i\cdot (n+1-i)\cdot \sigma_{i-1} = 0$ where we define $\sigma_0 = 1$.
\end{enumerate}
\end{lemma}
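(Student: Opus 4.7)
My plan is to prove both identities by examining how $\disc(f)$ depends on its coefficients through the roots $\lambda_1,\dots,\lambda_n$ of $f$. Identity (2) will follow from translation invariance: since $\disc(f)$ is a polynomial in the pairwise differences $\lambda_i - \lambda_j$, the substitution $x \mapsto x+t$ (which sends $\lambda_j \mapsto \lambda_j - t$) leaves $\disc(f)$ unchanged. Under this substitution, $\sigma_i$ becomes $\sigma_i(t) := e_i(\lambda_1 - t,\dots,\lambda_n - t)$, and a direct combinatorial expansion yields $\sigma_i'(0) = -(n+1-i)\sigma_{i-1}$, since each subset $T$ of size $i-1$ arises from $n+1-i$ choices of the extra element. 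Applying the chain rule to $\disc(f(x+t))$ at $t = 0$ then gives
\[
0 \;=\; \sum_{i=1}^n D_i \cdot \sigma_i'(0) \;=\; -\sum_{i=1}^n D_i\,(n+1-i)\,\sigma_{i-1},
\]
which is identity (2) after using the convention $\sigma_0 := 1$.

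For identity (1), the strategy is to show that $D_r D_s - D_{r+k} D_{s-k}$ vanishes on the discriminant hypersurface $V(\disc) \subset \mathbb{A}^n$. The generic discriminant is a classical irreducible polynomial in $\Z[\sigma_1,\dots,\sigma_n]$, so the ideal $(\disc)$ is prime in $\Q[\sigma]$, and any polynomial vanishing on $V(\disc)$ is divisible by $\disc$ over $\Q[\sigma]$; primitivity of $\disc$ together with Gauss's lemma then lifts the divisibility back to $\Z[\sigma]$.

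At a generic point of $V(\disc)$, the polynomial $f$ has a unique double root $\alpha$, so $f(x) = (x-\alpha)^2 g(x)$ with $g(\alpha) \neq 0$. Perturbing $\sigma_i \mapsto \sigma_i + \epsilon$ changes $f$ by $(-1)^i \epsilon\, x^{n-i}$, splitting the double root into two nearby simple roots $\alpha \pm \sqrt{-(-1)^i \epsilon\, \alpha^{n-i}/g(\alpha)}$ to leading order. This pair contributes $-4(-1)^i \epsilon\,\alpha^{n-i}/g(\alpha)$ to the square-of-differences formula for $\disc(f)$, while the remaining factors tend to $g(\alpha)^4\, \disc(g)$. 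Reading off the linear coefficient in $\epsilon$ gives
\[
D_i\big|_{V(\disc)} \;=\; -4\,(-1)^i\, \alpha^{n-i}\, g(\alpha)^3\, \disc(g).
\]
The decisive feature is that the $i$-dependence factors as $(-1)^i\alpha^{n-i}$; consequently both $D_r D_s$ and $D_{r+k}D_{s-k}$ equal $16\,(-1)^{r+s}\,\alpha^{2n-r-s}\,g(\alpha)^6\,\disc(g)^2$ on $V(\disc)$, establishing the desired vanishing. The main technical hurdle is this first-order expansion of $\disc$ under the perturbation; once the clean form of $D_i|_{V(\disc)}$ is in hand, identity (1) is immediate, and descending divisibility from $\Q[\sigma]$ to $\Z[\sigma]$ is routine.
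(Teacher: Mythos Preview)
Your proposal is correct and follows essentially the same strategy as the paper: part~(2) is derived from translation invariance of the discriminant via the chain rule, and part~(1) is obtained by showing that $D_i$, restricted to the locus $\{\disc=0\}$ where $f$ has a unique double root $\alpha$, has the form $(\text{constant in }i)\cdot(-1)^i\alpha^{n-i}$, so that $D_rD_s-D_{r+k}D_{s-k}$ vanishes there and irreducibility of $\disc$ forces the divisibility. The only notable difference is in how $D_i|_{V(\disc)}$ is computed: the paper inverts the Jacobian of the map $\pmb{\lambda}\mapsto\pmb{\sigma}$ to obtain a closed formula $D_i=2D\sum_j(-1)^{i+1}\lambda_j^{n-i}W_j$ valid identically, then specializes; you instead perturb $\sigma_i\mapsto\sigma_i+\epsilon$ and read off the linear term analytically, which is quicker but slightly less formal (it can be made rigorous via Puiseux series or by working over $\mathbb{C}$).
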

The second part of the above lemma can be derived from equation (1.27) in Chapter 12 of \cite{MR2394437}, but we were unaware of this reference until recently, so we provide a proof.

\begin{proof}
Our first goal is to give an expression for
$D_i(\pmb{\sigma}(\pmb{\lambda})) \in \mathbb{C}[\pmb{\lambda}]$.
Recall the change of variables to go from roots $\pmb{\lambda}$ to coefficients $\pmb{\sigma}(\pmb{\lambda})$ of a polynomial described in Section \ref{change of variables}.   The Jacobian matrix for this change of variables is $B(\pmb{\lambda})$ where
$B_{ij} = \frac{\partial{c_i}}{\partial \lambda_j}$.
A quick computation verifies that
\begin{equation*}
\frac{\partial{c_i}}{\partial \lambda_j}
=
c_{i-1}(\lambda_1, \ldots,
\hat{\lambda_j},
\ldots, \lambda_n),
\end{equation*}
where we write $\hat{\lambda_j}$ to indicate that the $j$-th coordinate should be omitted.
Below, we use the shorthand
$\hat{\pmb{\lambda}_j} = (\lambda_1, \ldots, \hat{\lambda_j}, \ldots, \lambda_n)$.  When $\lambda_i \neq \lambda_j$ for all $i\neq j$, we define the matrix
$A(\pmb{\lambda}) = (A_{ij})$ by
\begin{equation*}
A_{ij} = \lambda_i^{n-j} (-1)^{j+1} / \prod_{k\neq i}(\lambda_i-\lambda_k).
\end{equation*}
Then one can verify that
\begin{equation*}
(AB)_{ij} \prod_{k\neq i}(\lambda_i - \lambda_k)
=
\big(%
\lambda_i^{n-1}\sigma_0(\hat{\pmb{\lambda}_j})
- \lambda_i^{n-2}\sigma_{1}(\hat{\pmb{\lambda}_{j}})
+ \cdots
+ (-1)^{n-1}\sigma_{n-1}(\hat{\pmb{\lambda}_{j}})
\big)
= \prod_{k \neq j} (\lambda_i - \lambda_k).
\end{equation*}
To see the last equality, we used the identity
\begin{equation}\label{eq:symmetric_polys}
\prod_{i=1}^{n-1} (x - a_i) = \sum_{k=0}^{n-1} (-1)^k \sigma_i(\pmb{a}) x^{n-1-k}
\end{equation}
for expressing a polynomial with roots
$\pmb{a} = (a_1, \ldots, a_{n-1})$ in terms of the elementary symmetric polynomials in these roots.
This shows that $(AB)_{ij}$ is $0$ if $i \neq j$ and is equal to $1$ if $i = j$, hence
$AB = I$.

By the inverse function theorem and chain rule, we obtain the inverse function $\lambda_i=
\lambda_i(\pmb{\sigma})$ with
$\frac{\partial \lambda_i }{\partial \sigma_j } = A_{ij}$, and
$\frac{\partial D}{\partial \sigma_i}
= \sum_{j} \frac{\partial D}{\partial \lambda_j} \frac{\partial \lambda_j}{\partial \sigma_i}
= \sum_j \frac{\partial D}{\partial \lambda_j} A_{ji}$.

By an abuse of notation, we write $\disc(\mathbf{c}) := D(\mathbf{c}) := \disc(f_{\mathbf{c}})$,
and with $f = x^n - \sigma_1 x^{n-1} + \cdots (-1)^n \sigma_n$ as in the lemma statement, we
write $D = \disc(f)$.
For $\pmb{\sigma} = (\sigma_1, \ldots, \sigma_n) = \pmb{\sigma}(\pmb{\lambda})$, we note
that~\eqref{eq:symmetric_polys} also shows that the roots of $f$ are precisely the $\lambda_i$.
Thus $D(\pmb{\sigma}(\pmb{\lambda})) = \prod_{i<j} (\lambda_i-\lambda_j)^2$, and we have
$\frac{\partial D}{\partial \lambda_j} = 2D\sum_{k\neq j} (\lambda_j-\lambda_k)^{-1}$.
This implies that
\begin{equation*}
D_i
:=
\frac{\partial D}{\partial \sigma_i}
=
2D \sum_{j} \lambda_j^{n-i}(-1)^{i+1}
\cdot
\prod_{k\neq j}(\lambda_j-\lambda_k)^{-1}
\cdot \big( \sum_{k\neq j} (\lambda_j-\lambda_k)^{-1} \big)
= 2D\sum_{j} \lambda_j^{n-i}(-1)^{i+1}\cdot W_j,
\end{equation*}
where we denote
\begin{equation*}
W_j = \prod_{k\neq j}(\lambda_j-\lambda_k)^{-1} \cdot \sum_{k\neq j} (\lambda_j-\lambda_k)^{-1}.
\end{equation*}
Observe that
\begin{align*}
DW_j
&=
\prod_{i<k,i\neq j,k\neq j} (\lambda_i-\lambda_k)^2
\cdot
\prod_{k\neq j}(\lambda_j-\lambda_k)
\cdot
\sum_{k\neq j} (\lambda_j-\lambda_k)^{-1}
\end{align*}
is a polynomial in $\mathbb{C}[\pmb{\lambda}]$.
We thus obtain $D_i(\pmb{\lambda})$ as a polynomial in the Zariski open set
$D(\pmb{\lambda})\neq 0$ in the affine space of $\pmb{\lambda}$.
As $D_i(\pmb{\sigma}(\pmb{\lambda}))$ agrees with $D_i(\pmb{\lambda})$ in that open set, they
agree everywhere in $\pmb{\lambda}$ affine space (and not merely when $\lambda_i \neq
\lambda_j$ for all $i \neq j$).

We now evaluate $D_i$. Notice that $DW_j$ is zero if $\lambda_j$ is a triple root of $f$,
or if there exists a double root $\lambda_i = \lambda_k$ with $i, k \neq j$.
If there is a unique double root $\lambda_j = \lambda_{k_0}$, then
$DW_j = \prod_{i<k,i\neq j,k\neq j} (\lambda_i-\lambda_k)^2
\prod_{k\neq j, k_0} (\lambda_j-\lambda_k) = DW_{k_0}$.
Stated differently --- if $f$ has two distinct pairs of double roots or a triple
root, then $D_i = 0$ for all $i$. If there is a unique double root $\lambda_j$, then for any
$i$ we compute
\begin{equation*}
D_i =(-1)^{i+1} \lambda_j^{n-i}\cdot 4DW_j.
\end{equation*}
Therefore as a polynomial in $\mathbb{C}[\pmb{\sigma}]$, it follows that
\begin{equation*}
D_r D_s-D_{r+k}D_{s-k}
\equiv
(4DW_j)^2(-1)^{r+s}\cdot (\lambda_{j}^{2n-r-s}-\lambda_j^{2n-r-s})
=
0
\end{equation*}
on the variety cut out by $D(\pmb{\sigma})=0$. Since $D(\pmb{\sigma})$ is irreducible, it
follows from Hilbert's Nullstellensatz that $D \mid D_r D_s-D_{r+k}D_{s-k}$.

In order to prove the second statement, notice that discriminant is translation invariant.
That is,
$\disc(\lambda_1+t,\lambda_2+t,\ldots, \lambda_n+t)
= \disc(\lambda_1,\lambda_2,\ldots, \lambda_n)$.
Now we can consider the function $\sigma_i(t):= \sigma_i(\lambda_1+t,\lambda_2+t,\ldots, \lambda_n+t)$.
It is clear that $\frac{d \sigma_i(t)}{dt} = \sigma_{i-1}(\lambda_1+t,\ldots, \lambda_n+t) (n-i+1)$.
Therefore
\begin{equation*}
\left.\frac{d D(\pmb{\sigma}(t))}{dt}\right|_{t=0} = \left. \sum_{i}\frac{\partial D}{\partial \sigma_i}
\frac{d \sigma_i}{ d t}\right|_{t=0} = \sum_i D_i \cdot \sigma_{i-1}\cdot (n-i+1) = 0,
\end{equation*}
completing the proof.
\end{proof}

This lemma is general. We exploit the relationship $D \mid D_r D_s - D_{r+k} D_{s-k}$ below, but we note that there are many other algebraic relationships on the discriminant and its derivatives that might yield further refinements on the structure of the support of $\psi_{p^{2k}}$.
In practice, we use the following more specific lemma.

\begin{lemma}\label{lem:arithmetic_progression_disc}
Given $\mathbf{c} \in \mathbb{Z}^n$ corresponding to a polynomial $f_{\mathbf{c}}(x)$ with
$\disc(f_{\mathbf{c}}) \equiv 0 \bmod p^{2k}$, let $\mathbf{D}_{\mathbf{c}} :=
(\frac{\partial \disc(f)}{\partial c_1}, \ldots, \frac{\partial \disc(f)}{\partial c_n})$
denote the gradient vector of the discriminant function, and let $v_i(\mathbf{c})$ denote the
valuation at $p$ of $\frac{\partial \disc(f)}{\partial c_i} (\mathbf{c}) = D_i(\mathbf{c})$.

Then there either exists $a \in \mathbb{Z}_{\geq 0}$ such that
\begin{equation}\label{APeqn}
\min\{ v_i(\mathbf{c}), k \} = \min \{v_n(\mathbf{c}) + (n-i) a, k \}
\end{equation}
or $b \in \mathbb{Z}$ with $0 \leq b \leq \min(\mathrm{val}_p(n), k)$ such that
\begin{equation}\label{APeqn_increasing}
\min\{ v_i(\mathbf{c}), k \} = \min \{v_1(\mathbf{c}) + (i-1) b, k \}.
\end{equation}
\end{lemma}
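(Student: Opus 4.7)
The plan is to leverage both parts of Lemma~\ref{lem:relation} to constrain the $p$-adic valuations $v_i = v_p(D_i(\mathbf{c}))$: Lemma~\ref{lem:relation}(1) will yield a centered-mean condition forcing the $v_i$ into a truncated arithmetic progression, and Lemma~\ref{lem:relation}(2) will bound the increasing common difference by $v_p(n)$.

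First, I apply Lemma~\ref{lem:relation}(1) with $r = s = i$ and shift $j$ to get $\disc(f) \mid D_i^2 - D_{i-j} D_{i+j}$ for $1 \le i - j$ and $i + j \le n$. Since $v_p(\disc(f)) \ge 2k$, this promotes to
\begin{equation*}
\min(2 v_i, 2k) \;=\; \min(v_{i-j} + v_{i+j}, 2k),
\end{equation*}
and more generally $\min(v_r + v_s, 2k)$ depends only on the sum $r + s$ for $r, s \in [1, n]$. The key consequence is that whenever $v_i < k$, one obtains the exact equality $v_{i-j} + v_{i+j} = 2 v_i$ for every valid $j$, a strong centered-mean condition.

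The second step is to analyze $S := \{i : v_i < k\}$ using this condition. The $j = 1$ case reads $v_{i-1} + v_{i+1} = 2 v_i$ on interior points of $S$, so $v$ restricted to any connected component of $S$ is an AP. To see $S$ is connected, suppose two runs $[l_1, r_1]$ and $[l_2, r_2]$ were separated by a gap; applying the sum-preserving identity to the pairs $(r_1, l_2)$ and $(r_1+1, l_2-1)$, both with sum $r_1 + l_2$, gives a contradiction, since the former has both entries in $S$ (sum $< 2k$) while the latter has both in the gap (sum $\ge 2k$). Thus $S = [l, r]$ is a single interval with AP common difference $d$; the boundary symmetry at $l \ge 2$ forces $d \le v_l - k < 0$, and symmetrically $r \le n - 1$ forces $d > 0$, so $S$ must meet an endpoint of $[1, n]$. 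This dichotomy yields case~(a) when $S = [l, n]$ (take $a := -d \ge 0$) and case~(b) when $S = [1, r]$ (take $b := d \ge 0$), and iterated symmetries propagate the AP formula to the remaining indices, where the predicted value exceeds $k$ and matches the truncation.

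It remains to bound $b$ in case~(b). Isolating $n D_1 = -\sum_{i \ge 2} D_i (n + 1 - i) \sigma_{i-1}$ from Lemma~\ref{lem:relation}(2) and applying the ultrametric principle produces some $i \ge 2$ with $v_i - v_1 \le v_p(n)$. Combined with $v_i \ge \min(v_1 + (i-1)b, k)$ from case~(b), this yields $\min((i-1)b,\, k - v_1) \le v_p(n)$, so either $b \le v_p(n)/(i-1) \le v_p(n)$ directly, or $k - v_1 \le v_p(n)$; in the latter regime any slope exceeding $k - v_1$ forces $v_2 \ge k$, so we may replace $b$ by $k - v_1 \in [0, v_p(n)]$ without changing the truncated sequence. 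The main obstacle I anticipate is Step~2: establishing that $S$ is a single interval abutting an endpoint requires carefully deploying the short-range ($j = 1$) relations within a run and the long-range sum-preserving comparisons across potential gaps, while tracking which symmetries are valid at interior versus boundary indices; once this structural picture is secured, the bound on $b$ is a clean ultrametric calculation.
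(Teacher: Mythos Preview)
Your proposal is correct and follows essentially the same route as the paper: both derive the key valuation relation $\min(v_r+v_s,2k)=\min(v_{r+\ell}+v_{s-\ell},2k)$ from Lemma~\ref{lem:relation}(1), deduce the truncated arithmetic-progression structure of the $v_i$, and bound $b$ via Lemma~\ref{lem:relation}(2). Your argument is in fact more detailed than the paper's---your analysis of the set $S=\{i:v_i<k\}$ (its connectedness, the AP structure on $S$, and the endpoint-touching dichotomy) makes explicit what the paper only sketches.
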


Intuitively, $\min\{ v_i(\mathbf{c}), k\}$ almost forms an arithmetic
progression, except that when a term is greater than $k$ we change it to $k$.
Thus this lemma shows that the support of $\psi_{p^{2k}}$
is constrained to these ``near arithmetic progressions''.

\begin{proof}
By Lemma~\ref{lem:relation} (1), we see that when $p^{2k} \mid \disc(f_{\textbf{c}})$, $p^{2k} \mid
D_{r}(\textbf{c})D_{s}(\textbf{c}) - D_{r+\ell}(\textbf{c}) D_{s-\ell}(\textbf{c})$ for all relevant
$r, s, \ell$.
This implies the valuation relations $\min \{v_{r} (\mathbf{c})+v_{s} (\mathbf{c}), 2k\} =
\min\{v_{r+\ell} (\mathbf{c})+v_{s-\ell} (\mathbf{c}), 2k\}$. Specializing to $r = s = i$ and $\ell
= 1$ gives that $\min\{2v_{i}(\mathbf{c}), 2k\} = \min\{v_{i-1}(\mathbf{c}) + v_{i+1}(\mathbf{c}),
2k\}$.
Thus $\big(v_{i-1}(\mathbf{c}), v_i(\mathbf{c}), v_{i+1}(\mathbf{c})\big)$ is an arithmetic
progression when $v_{i}(\mathbf{c}) < k$ and ``nearly'' an arithmetic progression otherwise.

It is not possible for the sequence $\min\{v_i(c), k\}$ to initially decrease and later decrease.
For otherwise, there would exist $r$ and $s$, with $r < s - 1$, such that $\min\{v_r(c), k\}$ and
$\min\{v_s(c), k\}$ are smaller than $\min\{v_i(c), k\}$ for each $r < i < s$. But this would
contradict the valuation relation above with $\ell = 1$.

Hence the sequence $\min\{v_i(\mathbf{c}),k\}$ is either non-increasing
(giving~\eqref{APeqn}) or non-decreasing (giving~\eqref{APeqn_increasing}).
Note that Lemma~\ref{lem:relation}$(2)$ implies that $nD_1(\mathbf{c})$ is
in the ideal generated by $(D_2(\mathbf{c}), \ldots, D_n(\mathbf{c}))$, which restricts
$b \leq \min\{\mathrm{val}_p(n), k\} \leq n$ in~\eqref{APeqn_increasing}.
\end{proof}

Lemma~\ref{lem:arithmetic_progression_disc} implies that the support of
$\widehat{\psi_{p^{2k}}}$ is also on ``near arithmetic progressions.''

\begin{lemma}[Support on near arithmetic progressions]\label{lem:arithmetic_progression}
For $\mathbf{u} = (u_1, \ldots, u_n) \in (\mathbb{Z}/p^{2k}\mathbb{Z})^n$, we have that
\begin{equation*}
\widehat{\psi_{p^{2k}}}(\mathbf{u}) = 0
\end{equation*}
unless $\mathbf{u}$ satisfies one of the two ``near arithmetic progression'' properties that
\begin{equation}\label{eq:near_arithmetic_progression}
\min\{ v_p(u_i), k \} = \min \{ v_p(u_n) + (n-i) a, k \}
\end{equation}
for some $a \in \mathbb{Z}_{\geq 0}$, or
\begin{equation}\label{eq:near_arithmetic_progression_increasing}
\min\{ v_p(u_i), k \} = \min \{ v_p(u_1) + (i-1) b, k \}
\end{equation}
for some $b \in \mathbb{Z}$ with $0 \leq b \leq \min\{v_p(n), k\}$.
\end{lemma}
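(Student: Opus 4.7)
The plan is to expose the near-arithmetic-progression constraint on $\mathbf{u}$ by exploiting the linear behavior of $\disc(f_\mathbf{c})$ under translations $\mathbf{c}\mapsto\mathbf{c}+p^k\mathbf{s}$, and then to apply Lemma~\ref{lem:arithmetic_progression_disc} on the dual side.

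First, I would decompose each $\mathbf{c}\in(\mathbb{Z}/p^{2k}\mathbb{Z})^n$ as $\mathbf{c}=\mathbf{c}_0+p^k\mathbf{s}$ with $\mathbf{c}_0,\mathbf{s}$ ranging over fixed representatives in $(\mathbb{Z}/p^k\mathbb{Z})^n$. Since $\disc$ is a polynomial with integer coefficients, Taylor expansion gives
\[
\disc(f_{\mathbf{c}_0+p^k\mathbf{s}}) \equiv \disc(f_{\mathbf{c}_0}) + p^k\langle\mathbf{s},\mathbf{D}(\mathbf{c}_0)\rangle \pmod{p^{2k}},
\]
the quadratic and higher-order terms in $p^k\mathbf{s}$ being absorbed into $p^{2k}\mathbb{Z}$. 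Consequently, $\psi_{p^{2k}}(f_{\mathbf{c}_0+p^k\mathbf{s}})=1$ is equivalent to $p^k\mid\disc(f_{\mathbf{c}_0})$ together with a single linear congruence in $\mathbf{s}$ modulo $p^k$ with coefficient vector $\mathbf{D}(\mathbf{c}_0)$.

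Second, I would split the Fourier sum by $\mathbf{c}_0$ and observe that the inner sum over $\mathbf{s}$ becomes an additive character sum (with character $\exp(2\pi i\langle\mathbf{s},\mathbf{u}\rangle/p^k)$, since $p^k\mathbf{s}\cdot\mathbf{u}/p^{2k}=\mathbf{s}\cdot\mathbf{u}/p^k$) over a (possibly empty) affine subspace of $(\mathbb{Z}/p^k\mathbb{Z})^n$. By orthogonality, this inner sum vanishes unless $\mathbf{u}\bmod p^k$ lies in the annihilator of the kernel $\{\mathbf{s}:\langle\mathbf{s},\mathbf{D}(\mathbf{c}_0)\rangle\equiv 0\pmod{p^k}\}$, which is the cyclic subgroup $\{\lambda\,\mathbf{D}(\mathbf{c}_0)\bmod p^k:\lambda\in\mathbb{Z}/p^k\mathbb{Z}\}$. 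Hence $\widehat{\psi_{p^{2k}}}(\mathbf{u})\neq 0$ forces the existence of some $\mathbf{c}_0$ whose coset contains an actual support point $\mathbf{c}=\mathbf{c}_0+p^k\mathbf{s}$, and some $\lambda$ with $u_i\equiv\lambda D_i(\mathbf{c}_0)\pmod{p^k}$ for every $i$.

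Finally, the congruence above yields the valuation identity $\min\{v_p(u_i),k\}=\min\{v_p(\lambda)+v_p(D_i(\mathbf{c}_0)),k\}$. Since each $D_i$ has integer coefficients, $\mathbf{D}(\mathbf{c})\equiv\mathbf{D}(\mathbf{c}_0)\pmod{p^k}$, and Lemma~\ref{lem:arithmetic_progression_disc} applied to the support point $\mathbf{c}$ provides the near-AP structure for $(\min\{v_p(D_i(\mathbf{c})),k\})_i$. The elementary capping identity $\min\{v_p(\lambda)+\min\{x,k\},k\}=\min\{v_p(\lambda)+x,k\}$ then transfers this structure after the uniform shift by $v_p(\lambda)$ to $(\min\{v_p(u_i),k\})_i$, yielding \eqref{eq:near_arithmetic_progression} with the same $a$ or \eqref{eq:near_arithmetic_progression_increasing} with the same $b$; in particular the bound $b\leq\min\{v_p(n),k\}$ is preserved automatically. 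The main obstacle is the careful bookkeeping around the cap $\min\{\cdot,k\}$: Lemma~\ref{lem:arithmetic_progression_disc} is stated at a point $\mathbf{c}$ with $p^{2k}\mid\disc(f_\mathbf{c})$, whereas the congruence on $\mathbf{u}$ naturally involves the representative $\mathbf{c}_0$, and the gradient congruence together with the capping identity are exactly what is needed to reconcile the two.
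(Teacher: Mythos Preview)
Your proposal is correct and follows essentially the same approach as the paper: linearize $\disc$ via Taylor expansion along $p^k$-translates, identify the inner character sum over each coset as vanishing unless $\mathbf{u}\bmod p^k$ lies in the cyclic subgroup generated by $\mathbf{D}(\mathbf{c})$, and then invoke Lemma~\ref{lem:arithmetic_progression_disc}. The only cosmetic difference is that the paper indexes the decomposition by representatives $\mathbf{c}$ already lying in the support (so Lemma~\ref{lem:arithmetic_progression_disc} applies directly), whereas you use arbitrary representatives $\mathbf{c}_0$ and pass to a support point via $\mathbf{D}(\mathbf{c})\equiv\mathbf{D}(\mathbf{c}_0)\pmod{p^k}$; your treatment of the capping identity in the final transfer step is in fact more explicit than the paper's.
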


\begin{proof}
For each $\pmb{c} \in (\mathbb{Z}/p^{2k}\mathbb{Z})^n$ such that $p^{2k} \mid
\disc(f_{\pmb{c}})$, we associate the locus
\begin{equation*}
P_{\mathbf{c}}
:=
\{ \mathbf{v} \in (\mathbb{Z}/p^{2k}\mathbb{Z})^n
:
p^{2k} \mid \disc(f_{\mathbf{v}}),
\mathbf{v}-\mathbf{c}\in p^k \cdot (\mathbb{Z}/p^{2k}\mathbb{Z})^n
\}.
\end{equation*}
Two such $P_{\mathbf{c}}$ and $P_{\mathbf{c}'}$ are equal if
and only if $\mathbf{c} - \mathbf{c}' \equiv 0 \bmod p^k$.
Thus we can decompose the set of congruence classes
of polynomials whose discriminant is divisible by $p^{2k}$
into a disjoint union of $P_{\mathbf{c}}$ over a set $C$ of representatives $\mathbf{c}$
from $p^k (\mathbb{Z} / p^{2k} \mathbb{Z})^n$ satisfying $p^{2k} \mid
\disc(f_{\mathbf{c}})$, giving
\begin{equation*}
\widehat{\psi_{p^{2k}}}
=
\sum_{\mathbf{c} \in C} \widehat{\mathbf{1}_{P_{\mathbf{c}}}}.
\end{equation*}

This leads us to study $\widehat{\mathbf{1}_{P_{\mathbf{c}}}}$.
Below, we use the notation $\mathbf{D}_{\mathbf{c}}$ for the gradient vector of the
discriminant function and the notation $v_i$ for the $p$-valuation for the
$i$-th coordinate of $\mathbf{D}_{\mathbf{c}}$ as in Lemma~\ref{lem:arithmetic_progression_disc}.
From the Taylor expansion
\begin{equation*}
\disc(f_{\mathbf{v}})
\equiv
\disc(f_{\mathbf{c}})
+
\mathbf{D}_{\mathbf{c}} \cdot (\mathbf{v}-\mathbf{c})\mod p^{2k}
\end{equation*}
and the fact that $p^{2k} \mid \disc(f_{\mathbf{c}})$, we see that $P_{\mathbf{c}}$ can be written as
\begin{equation}\label{otherdefnofpc}
P_{\mathbf{c}}
=
\Big\{
\mathbf{v} \in (\mathbb{Z}/p^{2k}\mathbb{Z})^n
:
\mathbf{D}_{\mathbf{c}} \cdot \frac{\mathbf{v}-\mathbf{c}}{p^k} \equiv 0 \mod p^k
\Big\}.
\end{equation}
Direct computation on the definition shows
\begin{equation}\label{defnofFT}
\widehat{\mathbf{1}_{P_{\textbf{c}}}}(\pmb{\xi})
=
\frac{1}{p^{2kn}}\exp{\left(2\pi i \frac{\langle \pmb{\xi}, \mathbf{c}\rangle}{p^{2k}}\right)}
\sum_{\mathbf{v}\in P_{\mathbf{c}}}
\exp{\Big(
2\pi i \frac{\langle \pmb{\xi}, \mathbf{v}-\mathbf{c}\rangle}{p^{2k}}
\Big)}.
\end{equation}
Let $w := \min (v_i(\mathbf{c}),k)$ denote the minimum valuation among the
coordinates of $\mathbf{D}_{\mathbf{c}}$. We will now show that
\begin{equation}\label{eq:Pc_support}
\widehat{\mathbf{1}_{P_{\textbf{c}}}}(\pmb{\xi})
= \begin{cases}
\exp(2\pi i \frac{\pmb{\xi}\cdot \textbf{c}}{p^{2k}})\cdot \frac{p^{w-k}} {p^{kn}},
& \pmb{\xi}  \equiv  \alpha\textbf{D}_{\textbf{c}} (\text{mod } p^k) \text{ for some } \alpha \in \mathbb{Z}/p^{k}\mathbb{Z}
\\
0 &\text{otherwise}.
\end{cases}
\end{equation}
To see this, first note that by~\eqref{otherdefnofpc}, the set $P_{\textbf{c}}-\textbf{c}$ forms an
additive group. This implies that
\begin{equation}\label{Gausssumtypeidentity}
\sum_{\mathbf{v} \in P_{\mathbf{c}}}
\exp{\Big(
2\pi i \frac{\langle \pmb{\xi}, \mathbf{v}-\mathbf{c}\rangle}{p^{2k}}
\Big)}
=
\exp{\Big(
2\pi i \frac{\langle \pmb{\xi},  \mathbf{u}-\mathbf{c}\rangle}{p^{2k}}
\Big)}
\sum_{\mathbf{v} \in P_{\mathbf{c}}}
\exp{\Big(
2\pi i \frac{\langle \pmb{\xi}, \mathbf{v}-\mathbf{c}\rangle}{p^{2k}}
\Big)}
\end{equation}
for any $\textbf{u} \in P_{\textbf{c}}$.

When $\pmb{\xi} \equiv \alpha \mathbf{D}_{\mathbf{c}} \bmod p^k$ for some
$\alpha \in \mathbb{Z}/p^k \mathbb{Z}$, all summands in~\eqref{defnofFT}
are equal to $1$ by the definition of $P_{\textbf{c}}$
in~\eqref{otherdefnofpc}. The first case of~\eqref{eq:Pc_support} follows
from the fact that $\lvert P_{\mathbf{c}} \rvert = p^{k(n-1)+w}$.
If $\pmb{\xi}$ cannot be written as $\alpha \textbf{D}_{\textbf{c}}$, then
there must be some $\mathbf{u} \in P_{\mathbf{c}}$ such that $\langle \pmb{\xi}, \mathbf{u} - \mathbf{c} \rangle \not \equiv 0 \bmod p^{2k}$.
Inserting this choice of $\mathbf{u}$ into~\eqref{Gausssumtypeidentity} shows
that $\widehat{\mathbf{1}_{P_{\mathbf{c}}}}(\pmb{\xi}) = 0$.
Thus~\eqref{eq:Pc_support} is shown.

It follows that for all $\mathbf{u}$ in the support of
$\widehat{\psi_{p^{2k}}}$, $\mathbf{u} \equiv \alpha \mathbf{D}_\mathbf{c}$
for some $\mathbf{c}$. The near arithmetic progression
conditions~\eqref{eq:near_arithmetic_progression} and~\eqref{eq:near_arithmetic_progression_increasing} are then implied by Lemma~\ref{lem:arithmetic_progression_disc}.
\end{proof}

For the phases $\mathbf{u} = (u_1, \ldots, u_n)$ for which the Fourier transform $\widehat{\psi_{p^{2k}}}(\mathbf{u})$ does not vanish, we will need an improvement over the trivial bound $\lvert \widehat{\psi_{p^{2k}}}(\mathbf{u})\rvert \leq \lvert\widehat{\psi_{p^{2k}}}(\mathbf{0})\rvert \ll_n p^{-k-\frac{2k}{n}}$ given by Lemma~\ref{lem:discriminant-classes}.
In fact, for our purposes, it will suffice to restrict our attention to those phases for which only $u_1$ and $u_2$ are possibly non-zero.
For the phases $\mathbf{u} = (u_1, \ldots, u_n)$ in which only $u_1$ is non-zero, we observe that the Fourier transform is typically $0$.

\begin{lemma} \label{lem:disc-fourier-bound-u1}
Let $p$ be prime, $n \geq 6$, and $k \geq 3$.  Suppose that $u_1 \in \mathbb{Z}$, and define $\mathbf{u}\in \mathbb{Z}^n$ by $\mathbf{u}=(u_1,0,0,\dots,0)$.  Then
$\widehat{\psi_{p^{2k}}}(\mathbf{u}) = 0$ if
$p^{2k} / \gcd(p^{2k}, n)$ does not divide $u_1$.
\end{lemma}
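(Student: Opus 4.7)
The plan is to exploit the translation invariance of the discriminant, already invoked in the proof of Lemma~\ref{lem:relation}: $\disc(f_{\mathbf{c}})$ is unchanged under $f(x) \mapsto f(x-t)$ for any $t$. For each $t \in \mathbb{Z}/p^{2k}\mathbb{Z}$, this symmetry defines a map $\tau_t \colon (\mathbb{Z}/p^{2k}\mathbb{Z})^n \to (\mathbb{Z}/p^{2k}\mathbb{Z})^n$ sending $\mathbf{c}$ to the coefficient vector of $f_{\mathbf{c}}(x-t)$. Because each new coefficient is an integer polynomial in $t$ and the entries of $\mathbf{c}$, and because $\tau_{-t}$ inverts $\tau_t$, the map $\tau_t$ is a well-defined bijection on $(\mathbb{Z}/p^{2k}\mathbb{Z})^n$ that preserves the indicator function $\psi_{p^{2k}}$.

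A direct expansion of $(x-t)^n + c_1(x-t)^{n-1} + \cdots$ shows that under $\tau_t$ the first coefficient transforms as $c_1 \mapsto c_1 - nt$. The crucial point is that this is the only coefficient whose transformation matters here, because for $\mathbf{u} = (u_1, 0, \ldots, 0)$ the inner product $\langle \mathbf{c}, \mathbf{u}\rangle = c_1 u_1$ depends only on $c_1$. Substituting $\mathbf{c} \mapsto \tau_t(\mathbf{c})$ in the definition \eqref{eqn:fourier-def} of the Fourier transform and using that $\tau_t$ is a bijection preserving $\psi_{p^{2k}}$, I would obtain
\begin{equation*}
\widehat{\psi_{p^{2k}}}(\mathbf{u})
\;=\; \exp\!\left(\frac{-2\pi i\, n t u_1}{p^{2k}}\right) \widehat{\psi_{p^{2k}}}(\mathbf{u})
\end{equation*}
for every $t \in \mathbb{Z}/p^{2k}\mathbb{Z}$. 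If $\widehat{\psi_{p^{2k}}}(\mathbf{u}) \neq 0$, then $n t u_1 \equiv 0 \pmod{p^{2k}}$ for all $t$, so $n u_1 \equiv 0 \pmod{p^{2k}}$. This is equivalent to $p^{2k}/\gcd(p^{2k}, n) \mid u_1$, which is the contrapositive of the claim.

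I do not anticipate a serious obstacle: the argument reduces to a single application of a translation symmetry already in the paper's toolkit, and the only computational content is the easily verified identity $\tilde{c}_1 = c_1 - nt$. The hypotheses $n \geq 6$ and $k \geq 3$ do not appear to be needed for this particular statement and are presumably retained for consistency with the surrounding lemmas that do require them.
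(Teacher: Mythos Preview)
Your proof is correct and is essentially the same as the paper's: the paper also exploits translation invariance of the discriminant, using the single shift $x \mapsto x+1$ (your $t=-1$) to transform $c_1 \mapsto c_1 + n$ and deduce that either the transform vanishes or $p^{2k} \mid n u_1$. Your version with a general $t$ is a harmless generalization, and your remark that the hypotheses $n \geq 6$, $k \geq 3$ are not actually used here is also accurate.
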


\begin{proof}
 Since only $u_1 \ne 0$, by definition, we have
\[
\widehat{\psi_{p^{2k}}}(\mathbf{u})
= \frac{1}{p^{2nk}} \sum_{\mathbf{c} \in (\mathbb{Z}/p^{2k}\mathbb{Z})^n} \psi_{p^{2k}}(f_\mathbf{c}) \exp\left( \frac{2\pi i c_1 u_1}{p^{2k}} \right).
\]
As the map $x \mapsto x + 1$ induces a bijection from
the support of $\psi_{p^{2k}}$ to itself (that is, on the set of polynomials whose discriminant is divisible by $p^{2k}$),
sending $c_1 = c_1(f)$ to $c_1 + n$ for each $f$, we also have
\[
\widehat{\psi_{p^{2k}}}(\mathbf{u}) =
\frac{1}{p^{2nk}} \sum_{\mathbf{c} \in (\mathbb{Z}/p^{2k}\mathbb{Z})^n} \psi_{p^{2k}}(f_\mathbf{c}) \exp\left( \frac{2\pi i (c_1 +n) u_1}{p^{2k}} \right).
\]
This implies that either $\widehat{\psi_{p^{2k}}}(\mathbf{u}) = 0$
or $p^{2k} \mid nu_1$.
\end{proof}

To study phases $\mathbf{u} = (u_1, \ldots, u_n)$ with both $u_1$ and $u_2$ non-zero, we'll use the following additional result.

\begin{lemma} \label{lem:disc-fourier-bound}
Let $p$ be prime, $n \geq 6$, and $k \geq 3$.  Suppose that $u_1,u_2 \in \mathbb{Z}$, and define $\mathbf{u}\in \mathbb{Z}^n$ by $\mathbf{u}=(u_1,u_2,0,\dots,0)$.  Then
\[
\widehat{\psi_{p^{2k}}}(\mathbf{u})
\ll_n p^{-\frac{2nk}{3}} \mathrm{gcd}(u_2,p^{2k})^{\frac{n}{3}}.
\]
\end{lemma}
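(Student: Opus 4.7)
The plan is to begin with the Fourier decomposition implicit in the proof of Lemma~\ref{lem:arithmetic_progression},
\[
\widehat{\psi_{p^{2k}}}(\mathbf{u}) = \frac{1}{p^{k(n+1)}} \sum_{\substack{\mathbf{c} \in C \\ \mathbf{u} \equiv \alpha \mathbf{D}_{\mathbf{c}} \bmod p^k}} p^{w(\mathbf{c})} \exp\!\Big(\frac{2\pi i \langle \mathbf{u}, \mathbf{c}\rangle}{p^{2k}}\Big),
\]
where $C$ is a set of coset representatives modulo $p^k$ and $w(\mathbf{c}) = \min(\min_i v_p(D_i(\mathbf{c})), k)$. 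For $\mathbf{u} = (u_1, u_2, 0, \ldots, 0)$, the near-arithmetic-progression restrictions from Lemmas~\ref{lem:arithmetic_progression_disc} and~\ref{lem:arithmetic_progression} (together with the forced entries $v_p(u_i) \geq 2k$ for $i \geq 3$) force $v_p(u_2) \geq k - v_p(n)$; set $\ell := v_p(u_2)$, so that $\gcd(u_2, p^{2k}) = p^{\min(\ell, 2k)}$.

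Next, I would exploit two families of discriminant-preserving symmetries of $\psi_{p^{2k}}$. Translation $f(x) \mapsto f(x+t)$ for $t \in \mathbb{Z}/p^{2k}\mathbb{Z}$, extending the bijection used in Lemma~\ref{lem:disc-fourier-bound-u1}, yields $|\widehat{\psi_{p^{2k}}}(u_1, u_2, 0, \ldots)| = |\widehat{\psi_{p^{2k}}}(u_1 + (n-1) u_2 t, u_2, 0, \ldots)|$; and scaling $f(x) \mapsto \lambda^{-n} f(\lambda x)$ for units $\lambda \in (\mathbb{Z}/p^{2k}\mathbb{Z})^\times$ yields $|\widehat{\psi_{p^{2k}}}(u_1, u_2, 0, \ldots)| = |\widehat{\psi_{p^{2k}}}(u_1/\lambda, u_2/\lambda^2, 0, \ldots)|$. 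Together these identify $|\widehat{\psi_{p^{2k}}}|$ across a large orbit of frequencies with $v_p(u_2) = \ell$ fixed. Applying Plancherel over this orbit, in conjunction with $\sum_{\mathbf{u}}|\widehat{\psi_{p^{2k}}}(\mathbf{u})|^2 \ll p^{-k(1+2/n)}$ (which follows from Lemma~\ref{lem:discriminant-classes}), produces a preliminary bound parametrized by $\ell$.

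This preliminary Plancherel bound already suffices for small $n$. To obtain the target uniformly in $n$, I would refine by partitioning $\mathbf{c} \in C$ according to the near-AP shape $(v_n, a)$ of the gradient vector $\mathbf{D}_\mathbf{c}$ identified in Lemma~\ref{lem:arithmetic_progression_disc}, and by exploiting the algebraic identity $\sum_i (n+1-i) \sigma_{i-1} D_i = 0$ from Lemma~\ref{lem:relation}(2). Pairing this identity with $\mathbf{u} \equiv \alpha \mathbf{D}_\mathbf{c} \bmod p^k$ yields an additional congruence constraint pinning $c_1$ modulo an appropriate power of $p$, which restricts the surviving shapes. For each admissible shape, the number of such $\mathbf{c}$ can be bounded by a fiber-wise application of Proposition~\ref{prop:etale-density}, with leading coefficients pinned; these counts, weighted by $p^{w(\mathbf{c})}$, are then summed against the phase. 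The hardest step I anticipate is the exponent calibration: producing exactly the factor $\gcd(u_2, p^{2k})^{n/3}$ requires a H\"older-style interpolation between the $L^\infty$ trivial bound and the $L^2$ Plancherel bound, distributed across the orbit size and the admissible-shape count so that the factors $p^{-2nk/3}$ and $\gcd(u_2, p^{2k})^{n/3}$ emerge in the claimed balance.
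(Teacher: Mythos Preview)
Your proposal is a strategy sketch whose quantitative core does not reach the target. Take $p\nmid n(n-1)$ and, by your own support observation, $v_p(u_2)=k$; the bound to prove is then $\ll_n p^{-nk/3}$. The translation and scaling symmetries both preserve the plane $\{(u_1',u_2',0,\dots,0)\}$, and the affine group they generate has stabilizer of order $\gg p^{2k}$ at any such point (every $\lambda\equiv 1\pmod{p^k}$, paired with a determined coset of $t\pmod{p^k}$, fixes the frequency), so the orbit has size $O(p^{2k})$. Combined with $\sum_{\mathbf{u}}|\widehat{\psi_{p^{2k}}}(\mathbf{u})|^2=\widehat{\psi_{p^{2k}}}(\mathbf{0})\ll p^{-k(1+2/n)}$, this yields at best $|\widehat{\psi_{p^{2k}}}(\mathbf{u})|\ll p^{-k(3/2+1/n)}$, which is already weaker than $p^{-nk/3}$ at $n=6$ and the deficit grows linearly with $n$. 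Since both interpolation endpoints you name---this $L^2$ bound and the trivial $L^\infty$ bound $p^{-k(1+2/n)}$---lie above the target, no H\"older combination of them can close the gap. The subsequent ``refinement'' via near-AP shapes, the identity of Lemma~\ref{lem:relation}(2), and fiberwise use of Proposition~\ref{prop:etale-density} is not specified enough to supply a new mechanism; bounding $\sum_{\mathbf{c}\in C}p^{w(\mathbf{c})}$ over admissible shapes without exploiting oscillation in the phase only reproduces $\widehat{\psi_{p^{2k}}}(\mathbf{0})$.

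The paper's argument is structurally different and makes the source of the exponent $n/3$ transparent. After the \'etale-algebra change of variables of \S\ref{change of variables}, the phase $-u_1\sigma_1+u_2\sigma_2$ is quadratic in the $\Z_p$-coordinates $\mathbf{a}$ of $\mathcal{O}_{K_p}$, with $\sigma_2$ having nondegenerate Gram matrix $M_{K_p}^TQM_{K_p}$ of determinant $\asymp_n|\Disc K_p|_p$. Set $\ell=k-\tfrac12 v_p(u_2)$. Perturbing a coordinate $a_i$ by $p^{\lceil\ell\rceil}\Z_p$ preserves $\psi_{p^{2k}}$ unless $a_i$ lies in a set $S$ of coordinates whose associated roots are $p^\ell$-congruent to some other root; a stationary-phase argument then shows the integral over $\mathcal{O}_{K_p}$ vanishes unless the $n-|S|$ independent linear congruences $u_2\,\partial\sigma_2/\partial a_i-u_1\,\partial\sigma_1/\partial a_i\equiv 0\pmod{p^{2k-\lceil\ell\rceil}}$ hold for $i\notin S$. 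With $m=|S|$, the root-collision conditions cost density $p^{-m\ell/2}$ and force $|\Disc f_\alpha|_p^{1/2}\le p^{-m\ell/2}$, while the linear constraints cost $p^{-(n-m)\ell}$; taking the minimum of $p^{-m\ell}$ and $p^{-(n-m/2)\ell}$ over $0\le m\le n$ gives $p^{-2n\ell/3}$, which is exactly the claimed $p^{-2nk/3}\gcd(u_2,p^{2k})^{n/3}$. The essential idea missing from your plan is this oscillatory use of the nondegenerate quadratic form $\sigma_2$.
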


\begin{proof}
Let $e_{p^{2k}} \colon \mathbb{Z}_p \to \mathbb{C}$ be the continuous extension of the map defined on integers $x$ by $e_{p^{2k}}(x) := \exp(\frac{2\pi i x}{p^{2k}})$.  By definition, it follows that we may rewrite the Fourier transform as
\[
\widehat{\psi_{p^{2k}}}(\mathbf{u})
    = \int_{\mathcal{P}_n(\mathbb{Z}_p)} \psi_{p^{2k}}(f) e_{p^{2k}}(u_1 c_1(f) + u_2 c_2(f))\, d\nu(f),
\]
where $c_1(f)$ and $c_2(f)$ respectively denote the coefficients of $x^{n-1}$ and $x^{n-2}$ in $f$.
Applying the change of variables from Section \ref{change of variables}, we find this integral is equal to
\begin{equation} \label{eqn:fourier-change}
\sum_{[K_p:\mathbb{Q}_p]=n} \frac{\lvert \mathrm{Disc}(K_p) \rvert_{p}^{1/2}}{\lvert
\mathrm{Aut}(K_p) \rvert } \int_{\mathcal{O}_{K_p}} \lvert \mathrm{Disc}(f_\alpha) \rvert_p^{1/2} \psi_{p^{2k}}(f_\alpha) e_{p^{2k}}(-u_1\sigma_1(\pmb{\lambda}) + u_2\sigma_2(\pmb{\lambda})) d\mu(\alpha),
\end{equation}
where $f_\alpha$ is the characteristic polynomial of $\alpha$ and $\sigma_i(\pmb{\lambda})$ denotes the $i$-th elementary symmetric function in the roots $\pmb{\lambda}$.  Fixing the \'etale algebra $K_p = F_1 \times \cdots \times F_r$ (with $n_i := \deg(F_i)$), we note that $\sigma_1$ is linear as a polynomial in $\pmb{\lambda}$, and is therefore also linear in the choice of coordinates $\mathbf{a} \in \mathbb{Z}_p^n$ from Section \ref{change of variables} and the proof of Proposition \ref{prop:etale-density} (which we use in this proof as well).  Additionally, $\sigma_2$ is a quadratic form in $\pmb{\lambda}$ with Gram matrix
\[
Q = \begin{pmatrix}
0 & 1 & 1 & \dots & 1 \\
1 & 0 & 1 & \dots & 1 \\
\vdots & & \ddots & & \vdots \\
1 & 1 & 1 & \dots & 0
\end{pmatrix},
\]
that is, $\sigma_2(\pmb{\lambda}) = \frac{1}{2} \pmb{\lambda}^T Q \pmb{\lambda}$, where we regard $\pmb{\lambda}$ as a column vector.  As $\pmb{\lambda} = M_{K_p} \mathbf{a}$, we therefore find that $\sigma_2$, as a polynomial in $\mathbf{a}$, has Gram matrix $M_{K^p}^T Q M_{K_p}$.  For convenience in what is to come, we note now that $\det Q = (-1)^{n-1} (n-1)$.
Thus $\lvert \det M_{K^p}^T Q M_{K_p} \rvert_p \ll_n \lvert \mathrm{Disc}(K_p) \rvert_p$,
where $M_{K_p}$ is as in Section \ref{change of variables}.

Similarly, using the formula in Section \ref{change of variables}, if one considers $\sigma_2$ (as a polynomial in $\mathbf{a}$) as a quadratic form modulo $p$,
this computation shows that we have $\lvert \Disc(K_p) \rvert_p \ll p^{-\textnormal{corank}_{\mathbb{F}_p}(\sigma_2)}$, as the corank represents the number of column vectors that are linearly dependent $p$-adically.

For each $\mathbf{a} = (a_1, \ldots, a_n) \in \mathbb{Z}_p^n$ associated to a point $\alpha \in \mathcal{O}_{K_p}$ in the support of $\psi_{p^{2k}}$,
we introduce the following quantities:

\begin{itemize}
\item We set $\ell = k - \frac{v_p(u_2)}{2}$, and
in the following we will write $a \equiv b \bmod p^\ell$ to mean that $v_p(a - b) \geq \ell$, including the case when $\ell$ is a half integer.
\item We write $S$ for the set of $i$ (or, by abuse of notation, the set of $a_i$) with the following property: one of the roots
for which $a_i$ is a coefficient is  subject to a congruence $\lambda \equiv \lambda^\prime \pmod{p^{\ell}}$, where $\lambda^\prime$ is some other root, and where the congruence is as polynomials in the appropriate integral bases.
\item We write $m := \lvert S \rvert$.
\end{itemize}

If $a_i \in S$,
then using the Galois action we see that \emph{every} root for which $a_i$ is a coefficient will be subject to such a congruence, and thus we may regard this congruence as being of the form $\alpha \equiv \gamma(\alpha') \pmod{p^\ell}$ for some nontrivial $\gamma \in \mathrm{Gal}(\overline{\mathbb{Q}_p}/\mathbb{Q}_p)$.

We distinguish two cases:
\begin{enumerate}
    \item If $\alpha_j \equiv \gamma(\alpha_j) \pmod{p^\ell}$ for some nontrivial $\gamma\in \mathrm{Gal}(\widetilde{F_j}/\mathbb{Q}_p)$,
    let $\lvert \gamma \rvert$ denote the order of $\gamma$.
    Taking traces shows that $\lvert \gamma \rvert\alpha_j$ must be congruent $\pmod{p^\ell}$ to an element of the subfield $F_j^\gamma$ of $F_j$ fixed by $\gamma$.
    Since $\gamma$ acts nontrivially on the conjugates of $F_j$, this subfield has degree at most $n_j / 2$, from which it follows that at least $n_j/2$ values $r_j$ are greater than $\ell$.
    This will be satisfied for a proportion of $O(p^{-\ell n_j/2})$ of the possible $\alpha_j$.
    \item We divide the set of remaining $\alpha$ into equivalence classes where
    $\alpha_{j_1} \equiv \gamma_2(\alpha_{j_2}) \equiv \cdots \equiv \gamma_t(\alpha_{j_t}) \pmod{p^\ell}$ for some $t \geq 2$.  Once the $\gamma$ are fixed, any one of the $\alpha_j$ determines all the rest
    $\pmod{p^\ell}$. Assuming without loss of generality that $\alpha_{j_1}$ is of minimal degree,
    these congruences will be satisfied for a proportion of $O(\min(p^{-\ell(n_{j_2} + \dots + n_{j_t})}))$ of the possible tuples
    $(\alpha_{j_1}, \dots, \alpha_{j_t})$.
\end{enumerate}
There are $O_n(1)$ ways of partitioning the set of $\alpha$ into equivalence classes as above and choosing the $\gamma$.
Altogether we obtain a density $\ll_n p^{-\ell N/2}$, where $N$ is the total of the degrees $n_j$ over those $\alpha_j$ corresponding to at least one coordinate in $S$.
As $N \geq m = \lvert S \rvert$, this density is $\ll_n p^{-\ell m/2}$. Moreover, the above procedure identifies $\geq m/2$ pairs of roots, distinct but not necessarily
disjoint, which are congruent $\pmod{p^\ell}$, so we have
\begin{equation}\label{eq:bound_dfa}
    \lvert \mathrm{Disc}(f_\alpha) \rvert_p^{1/2} \leq p^{-\frac{m\ell}{2}} \lvert
    \mathrm{Disc}(K_p) \rvert_p^{1/2}
\end{equation}
for any such $\alpha$. So far we obtain a bound
\begin{equation}\label{eq:bound_part1}
\ll p^{-m\ell} \lvert \mathrm{Disc}(K_p) \rvert_p^{1/2}
\end{equation}
for the integral in \eqref{eqn:fourier-change}.

For the $n-m$ remaining coordinates $a_i \not \in S$, we may change $a_i$ by an arbitrary multiple of $p^{\lceil\ell\rceil}$ while preserving the discriminant.  Motivated by the method of (non)stationary phase, by Taylor expanding, we then observe that
\[
u_2 \sigma_2(\mathbf{a}+b_ip^{\lceil\ell\rceil}) - u_1 \sigma_1(\mathbf{a} + b_ip^{\lceil\ell\rceil})
\equiv u_2 \sigma_2(\mathbf{a}) - u_1 \sigma_1(\mathbf{a}) + b_i p^{\lceil\ell\rceil}\cdot \Big(u_2 \frac{\partial \sigma_2}{\partial a_i}(\mathbf{a}) - u_1 \frac{\partial \sigma_1}{\partial a_i}(\mathbf{a})\Big)\!\!\!\! \pmod{p^{2k}},
\]
where we use $b_i$ to indicate a vector whose only nonzero component is $b_i$ in the $i$-th coordinate.
We have used that $\sigma_1(\mathbf{a})$ is linear in $a_i$ and the relationship between $\ell$ and $u_2$ in omitting higher derivatives.
This implies that the integral
 \[
    \int_{b_i \in \mathbb{Z}_p} \lvert \mathrm{Disc}(f_\alpha) \rvert_p^{1/2} \psi_{p^{2k}}(f_\alpha) e_{p^{2k}}(-u_1\sigma_1(\mathbf{a}+b_ip^{\lceil\ell\rceil}) + u_2\sigma_2(\mathbf{a}+b_ip^{\lceil\ell\rceil})) \,d b_i
 \]
will vanish unless the congruence $u_2 \frac{\partial \sigma_2}{\partial a_i}(\mathbf{a}) - u_1 \frac{\partial \sigma_1}{\partial a_i}(\mathbf{a}) \equiv 0 \pmod{p^{2k-\lceil\ell\rceil}}$ is satisfied.  Consequently, we may restrict the integral
\begin{equation}\label{eq:int_res}
\int_{\mathcal{O}_{K_p}} \lvert \mathrm{Disc}(f_\alpha) \rvert_p^{1/2} \psi_{p^{2k}}(f_\alpha) e_{p^{2k}}(-u_1\sigma_1(\pmb{\lambda}) + u_2\sigma_2(\pmb{\lambda})) d\mu(\alpha)
\end{equation}
to those points satisfying the congruence $u_2 \frac{\partial \sigma_2}{\partial a_i}(\mathbf{a}) -
u_1 \frac{\partial \sigma_1}{\partial a_i}(\mathbf{a}) \equiv 0 \pmod{p^{2k-\lceil\ell\rceil}}$ for
each $a_i \not \in S$.  Since $\sigma_2$ is quadratic, these are linear constraints, and they will
be independent over $\mathbb{Z}_p$ since the coefficients of $\frac{\partial \sigma_2}{\partial
a_i}$ are precisely the entries of the $i$-th column of the Gram matrix $M_{K_p}^T Q M_{K_p}$, which
is nonsingular by our earlier computation.  In fact, the same computation reveals that the density
of points $\mathbf{a} \in \mathbb{Z}_p^n$ satisfying these conditions will be at most
$p^{-(n-m)\lfloor\ell\rfloor} \lvert \mathrm{Disc}(K_p) \rvert_p^{-1}$.  This bound, multiplied by
\eqref{eq:bound_dfa}, is $O(p^{-(n - \frac{m}{2})\ell} \lvert \Disc(K_p) \rvert^{-1/2})$.

To obtain the same bound if $\ell$ is a half-integer (so that $\lfloor \ell\rfloor = \ell -\frac{1}{2}$),
we begin by recasting the previous argument slightly. We have seen that, if $\alpha$ meets the conditions described by $S$, then
the integrand in \eqref{eq:int_res} is constant in the box $\mathbf{a} + p^{\lceil \ell \rceil} \Z_p^{n - m}$, and that the total measure
of these boxes is $p^{-(n-m)\lceil \ell \rceil} \lvert \mathrm{Disc}(K_p) \rvert_p^{-1}$.

We now slightly enlarge the boxes, dividing $\Z_p^{n - m}$ into boxes of the form $\mathbf{a} + p^{\lfloor \ell \rfloor} \Z_p^{n - m}$. Here
a choice of the coordinates in $S$ will be fixed, and each $\mathbf{a}$ will be chosen to meet the
conditions described by $S$ and satisfy $\psi_{p^{2k}}(f_{\mathfrak{a}}) = 1$. The following will hold for all $\mathbf{a}' = \mathbf{a} + p^{\lfloor \ell \rfloor} \mathbf{b}  \in
\mathbf{a} + p^{\lfloor \ell \rfloor} \Z_p^{n - m}$:
\begin{enumerate}
    \item We will have $\lvert \Disc(f_{\mathbf{a}'}) \rvert_p \leq \lvert \Disc(f_{\mathbf{a}})
    \rvert_p$, with strict inequality if and only if $\mathbf{b}$ satisfies any of a nonempty finite set of $\Z_p$-linear constraints $\pmod{p}$.
    \item $\psi_{p^{2k}}(f_\mathbf{a})$ will be identically $1$.
    \item The quantity $-u_1 \sigma_1(\pmb{\lambda}) + u_2 \sigma_2(\pmb{\lambda}) \pmod{p^{2k}} $ will be constant on boxes of side length $p^{\lfloor \ell \rfloor}$, and hence may be written as the sum of a constant depending only on $\mathbf{a}$, and a quadratic polynomial $q(\mathbf{b})$ depending only on $\mathbf{b} \pmod p$.
    \item After an invertible $\Z_p$-linear change of variables, we may write $q(\mathbf{b}) = c_1 b_1^2 + \cdots + c_t b_t^2$ $+$ (terms involving only $b_{t + 1}, \dots, b_{n - m}$), where we have $n - m - t \leq \textnormal{corank}_{\mathbb{F}_p}(\sigma_2)$.
\end{enumerate}

Ignoring the condition (1) for now, the integral over $\mathbf{a} + p^{\lfloor \ell \rfloor} \mathbf{b}$ reduces to
$\lvert\Disc(f_{\mathbf{a}})\rvert^{1/2}_p p^{-(n - m) \lceil \ell \rceil}$ times a sum over $\mathbf{b} \pmod{p} \in \mathbb{F}_p^{n -m}$, which factors as a product
of $t$ Gauss sums, bounded above by $p^{t/2}$, and a sum of the other $n - m - t$ variables, which is
$\leq p^{n - m - t} \leq p^{\textnormal{corank}_{\mathbb{F}_p}(\sigma_2)} \ll \lvert \Disc(K_p)\rvert_p^{-1}$.
Altogether we obtain a bound
\begin{align}\label{eq:bound1}
\ll & \ \lvert \Disc(f_{\mathbf{a}})\rvert^{1/2}_p p^{-(n - m) \lceil \ell \rceil} p^{t/2} \lvert \Disc(K_p)\rvert^{-1} \nonumber \\
\ll & \ \lvert \Disc(f_{\mathbf{a}})\rvert^{1/2}_p p^{-(n - m) \ell} \lvert \Disc(K_p)\rvert^{-1} \nonumber \\
\ll & \ p^{-(n - \frac{m}{2})\ell} \lvert\Disc(K_p)\rvert^{-1/2}.
\end{align}
We handle the condition (1) by an inductive argument. Passing to the set of $\mathbf{b}$ satisfying any one of the linear constraints, and
choosing a $\Z_p$-basis for this set, (1)-(4) will remain true, with the rank $t$ of the Gauss sum either remaining the same, or decreasing
by $1$. Since $\lvert \Disc(f) \rvert_p$ decreases by a factor of $p$, as does the measure of the set being integrated over, while $t$ may decrease
by $1$, we obtain a bound a factor of $O(p^{-1})$ smaller than \eqref{eq:bound1}. Continuing to pass to such subsets, and using
inclusion-exclusion as needed, we obtain the same bound $\eqref{eq:bound1}$ incorporating (1).

Our density bounds for $a_i \in S$ and $a_i \not \in S$ have not been proved independent,
so adding the minimum of \eqref{eq:bound_part1} and \eqref{eq:bound1} over the $2^n$ choices for $S$, and
noting
that $\lvert \Disc(K_p) \rvert_p \leq 1$,
we obtain
\[
        \frac{\lvert \mathrm{Disc}(K_p) \rvert_{p}^{1/2}}{\lvert \mathrm{Aut}(K_p) \rvert }
        \int_{\mathcal{O}_{K_p}} \lvert \mathrm{Disc}(f_\alpha) \rvert_p^{1/2} \psi_{p^{2k}}(f_\alpha) e_{p^{2k}}(-u_1\sigma_1(\pmb{\lambda}) + u_2\sigma_2(\pmb{\lambda})) d\mu(\alpha)
            \ll_n p^{-\frac{2n\ell}{3}}.
    \]
Adding this across the $O_n(1)$ \'etale algebras, and recalling that $\ell = k - \frac{v_p(u_2)}{2}$, we deduce the result.
\end{proof}

\section{Polynomials with a large and powerful index}\label{sec:large-radical}

The main goal of this section is to prove Theorem~\ref{thm:small-radical-index}, which states that the number of polynomials for which  $\idx(f) \gg_n H^{\frac{n(n-3)}{2}}$ but $\mathrm{rad}(\idx(f)) < H^{1 - \epsilon}$ is $O_{n,\epsilon}(H^{\frac{n^2+n}{2}-\frac{4}{3}-\frac{4}{n} + \epsilon}+H^{\frac{n^2+n}{2}-\frac{2n}{3}+3+\epsilon})$.

We begin with the following elementary lemma that, in the context of Theorem \ref{thm:small-radical-index}, will allow us to choose a convenient divisor of the index.

\begin{lemma} \label{lem:powerful-divisor}
Let $m \geq 2$ be an integer, and let $C = \rad(m)$ be the product of the primes dividing $m$.  Let $k \geq 2$.  If $m \geq C^{2k-2}$, then for every $x \in \mathbb{R}$ such that $C^{k-1} \leq x \leq m/C^{k-1}$,  $m$ has a $k$-powerful divisor $d$ in the interval $[x,Cx]$.
\end{lemma}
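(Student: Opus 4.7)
The plan is to define $d$ as the smallest $k$-powerful divisor of $m$ satisfying $d \geq x$ and then to show $d \leq Cx$ directly.  Working with this extremal $d$---rather than trying to construct a divisor prime-by-prime---keeps the bookkeeping minimal.

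To see that such $d$ exists, I would point to $M := \prod_{p \mid m,\, v_p(m) \geq k} p^{v_p(m)}$, which is manifestly a $k$-powerful divisor of $m$.  The complementary factor $m/M$ is a product of prime powers all with exponents strictly less than $k$, so $m/M \leq C^{k-1}$.  Combined with the hypothesis $x \leq m/C^{k-1}$, this gives $M \geq m/C^{k-1} \geq x$, so $M$ is a valid candidate and the minimum $d$ is well-defined.

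For the upper bound, suppose for contradiction that $d > Cx$.  If some prime $p$ appeared in $d$ with multiplicity strictly greater than $k$, then $d/p$ would also be a $k$-powerful divisor of $m$; being strictly smaller than $d$, the minimality of $d$ would force $d/p < x$, whence $p > d/x > C$.  But $p \mid m$ gives $p \leq \rad(m) = C$, a contradiction.  So every prime dividing $d$ appears with multiplicity exactly $k$, which yields $d = \rad(d)^{k} \leq C^{k}$.  Combined with $d > Cx \geq C \cdot C^{k-1} = C^{k}$, this is the desired contradiction.

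The only mild subtlety is choosing the right candidate for $d$; once $d$ is taken to be the minimal $k$-powerful divisor $\geq x$, the ``all exponents in $d$ equal $k$'' step and the bound $d \leq C^{k}$ are essentially forced.  The hypotheses $m \geq C^{2k-2}$ and $x \geq C^{k-1}$ enter only through the nonemptiness of the range for $x$ and through the crucial inequality $Cx \geq C^{k}$ at the end, so there is no genuine obstacle beyond setting up the extremal definition.
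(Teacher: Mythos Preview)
Your proof is correct, and it is cleaner than the paper's. The paper first reduces to the case that $m$ is itself $k$-powerful (by passing to the maximal $k$-powerful divisor $m'$ of $m$), and then in that case constructs a divisor of the shape $C^k a$ with $a \mid m/C^k$: it takes the minimal divisor $a$ of $m/C^k$ above a threshold and divides it by a prime to land in the desired interval. Your argument bypasses both the reduction step and the auxiliary factor $a$: you work directly with the minimal $k$-powerful divisor $d \geq x$ of $m$, and the single observation that if $d > Cx$ then every prime in $d$ must occur to exactly the $k$th power (else $d/p$ would contradict minimality) forces $d \leq \rad(d)^k \leq C^k \leq Cx$. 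The two proofs share the same extremal-plus-divide-by-a-prime idea, but yours applies it to the $k$-powerful divisors themselves rather than to an auxiliary quotient, which eliminates the case analysis and the separate treatment of non-$k$-powerful $m$.
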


(Recall that an integer $d$ is $k$-powerful if every prime $p \mid d$ divides $d$ to order at least $k$.)

\begin{proof}
We begin by proving a slightly stronger statement in the case that $m$ is itself $k$-powerful, in particular producing a $k$-powerful divisor in every interval of the form $[x,Cx]$ with $C^{k-1} \leq x \leq m/C$.  If $x \leq C^k$, then we simply take the divisor $d$ to be $C^k$.  If $x>C^k$, then we consider divisors of the form $C^k a$ with $a$ a divisor of $m/C^k$, and we claim there must be such a divisor $a$ in the interval $[x/C^k, x/C^{k-1}]$.  If this interval includes $m/C^k$, then we take the divisor $a$ to be $m/C^k$ itself.  Otherwise, let $a$ be the minimal divisor of $m/C^k$ greater than $x/C^{k-1}$.  Then every prime divisor $p$ of $a$ is at most $C$, and thus $a/p \geq x/C^k$.  On the other hand, by the minimality assumption on $a$, $a/p \leq x/C^{k-1}$, and thus $a/p$ is the claimed divisor.  This completes the proof in the case $m$ is $k$-powerful.

If $m$ is not $k$-powerful, let $m^\prime$ denote the maximal $k$-powerful divisor of $m$ and let $C^\prime = \rad(m^\prime)$, and notice that $C^\prime \leq C$.  Additionally, we have $m/C^{k-1} \leq m^\prime / {C^\prime}^{(k-1)}$.  It then follows from the previous paragraph that $m^\prime$ has a $k$-powerful divisor in the interval $[x,C^\prime x] \subseteq [x,Cx]$ for $x$ as in the statement of the lemma, and thus $m$ must too.
\end{proof}

We now turn to the proof of Theorem \ref{thm:small-radical-index}.

\begin{proof}[Proof of Theorem \ref{thm:small-radical-index}]
We wish to show that the number of monic polynomials $f(x) \in \mathbb{Z}[x]$ of degree $n$ and height $H$ for which $\mathrm{rad}(\mathrm{index}(f)) < H^{1-\epsilon}$ but $\mathrm{index}(f) > H^{\frac{n(n-3)}{2}}$ is $O_{n,\epsilon}(H^{\frac{n^2+n}{2}-\frac{4}{3}-\frac{4}{n} + \epsilon}+H^{\frac{n^2+n}{2}-\frac{2n}{3}+3+\epsilon})$.
Suppose we are considering such a polynomial $f$.  By Lemma \ref{lem:powerful-divisor} applied with $k=3$, there is some cubefull divisor $d$ of the index of $f$ satisfying $H^{2-2\epsilon} < d \leq H^{3-3\epsilon}$.  By the remainder theorem, given such a divisor $d$, the polynomial $f$ will lie in the support of the function $\psi_{d^2} = \prod_{p^k \mid\mid d} \psi_{p^{2k}}$.  Moreover, the support of this function is determined by congruence conditions $\pmod{d^2}$.  Since $d^2 > H$, we may not simply estimate the number of such polynomials by na\"ively counting the number in each residue class.   Instead, we identify the space of monic, degree $n$ polynomials with $\mathbb{Z}^n$ and we let $\phi\colon \mathbb{R}^n \to \mathbb{R}$ be a non-negative Schwartz function, chosen so that it is greater than $1$ on the unit box $[-1,1]^n$ and so that its Fourier transform $\widehat\phi$ has compact support contained in $[-\frac{1}{2n^5},\frac{1}{2n^5}]^n$.

Then by Poisson summation,
\begin{align} \label{eqn:index-poisson}
\sum_{\mathbf{c} \in \mathbb{Z}^n} \phi\Big(\frac{c_1}{H},\ldots,\frac{c_n}{H^n}\Big) \psi_{d^2}(f_\mathbf{c})
&= H^{\frac{n^2+n}{2}}\sum_{\mathbf{u} \in \mathbb{Z}^n} \widehat\phi\Big(\frac{u_1 H}{d^2}, \ldots, \frac{u_n H^n}{d^2}\Big) \widehat\psi_{d^2}(\mathbf{u}) \notag \\
&= H^{\frac{n^2+n}{2}}\widehat\psi_{d^2}(\mathbf{0}) \widehat{\phi}(\mathbf{0}) + H^{\frac{n^2+n}{2}} \sum_{\mathbf{0} \ne \mathbf{u} \in \mathbb{Z}^n} \widehat\phi\Big(\frac{u_1 H}{d^2}, \ldots, \frac{u_n H^n}{d^2}\Big) \widehat\psi_{d^2}(\mathbf{u})
\end{align}
where $\widehat\psi_{d^2}(\mathbf{u})$ is defined by
\[
\widehat\psi_{d^2}(\mathbf{u})
= \frac{1}{d^{2n}} \sum_{f \in (\mathbb{Z}/d^2\mathbb{Z})^{n}} \psi_d(f) e^{\frac{2\pi i \langle f,\mathbf{u}\rangle}{d^2}}.
\]
By the remainder theorem, $\widehat\psi_{d^2}(\mathbf{u})$ may be decomposed as $\widehat\psi_{d^2}(\mathbf{u}) = \prod_{p^k \mid\mid d} \widehat\psi_{p^{2k}}(\gamma_p \mathbf{u})$ for some $\gamma_p$ coprime to $p$.  Thus, $\widehat\psi_{d^2}(\mathbf{u})$ may be bounded by the results of the previous section, in particular Lemma~\ref{lem:discriminant-classes}, Lemma~\ref{lem:arithmetic_progression}, and Lemma~\ref{lem:disc-fourier-bound}.
Using Lemma~\ref{lem:discriminant-classes}, the first term on the right-hand side of \eqref{eqn:index-poisson} is $O_{n,\phi,\epsilon}(H^{\frac{n^2+n}{2}} d^{-1-\frac{2}{n}+\epsilon})$.  Added over cubefull integers $d>H^{2-2\epsilon}$, this yields a total contribution that is $O_{n,\phi,\epsilon}(H^{\frac{n^2+n}{2}-\frac{4}{3}-\frac{4}{n}+\epsilon})$, which matches the bound claimed in Theorem \ref{thm:small-radical-index}.

For the contribution from the sum of the non-trivial Fourier coefficients, we first note that by the
compact support of $\widehat\phi$, the summation is supported on those $\mathbf{u}$ for which each
$\lvert u_i \rvert\leq d^2/2H^i$.  Since we have  assumed that $d \leq H^{3-3\epsilon}$, this implies that $u_i=0$ for $i \geq 6$.
In particular, since $u_6=0$, it follows from the arithmetic progression property of the support (Lemma~
\ref{lem:arithmetic_progression}) of $\widehat\psi_{d^2}$ that each $u_i$ for $i \leq 5$ must be divisible by $d/\gcd(n^{6-i}, d)$. (In the worst case, when~\eqref{eq:near_arithmetic_progression_increasing} holds for every prime $p$ dividing $d$, we bound $b \leq v_p(n)$).
Thus, writing $u_i = (d/\gcd(n^{6-i}, d)) u_i'$, we find that $u_i^\prime$ satisfies $\lvert u_i'
\rvert \leq d / H^{i}$, which in particular also implies that $u_i = 0$ for $i \geq 3$.  If $u_2=0$,
then the sum is only over integers $\lvert u_1 \rvert \leq d^2/H$.  However, by Lemma \ref{lem:disc-fourier-bound-u1}, $\widehat{\psi_{d^2}}(\mathbf{u})$ will vanish unless $u_1$ is divisible by $d^2/\mathrm{gcd}(d^2,n)$,
which forces $u_1=0$ in this range as well.
As we have already considered the contribution from the trivial Fourier coefficient, we may therefore assume that $u_2 \ne 0$.

If $u_2 \ne 0$, then by Lemma \ref{lem:disc-fourier-bound}, $\hat\psi_{d^2}(\mathbf{u}) \ll_{n,\epsilon} d^{-2n/3+\epsilon} \mathrm{gcd}(d^2,u_2)^{n/3} \ll_n d^{-n/3+\epsilon} \mathrm{gcd}(d,u_2')^{n/3}$.  This case thus yields a contribution that is
\begin{equation}\label{eqn:u2}
\ll_{n,\epsilon} \frac{H^{\frac{n^2+n}{2}+\epsilon}}{d^{n/3}} \sum_{\lvert u_1^\prime \rvert \ll_n
d/H} \sum_{0 \ne \lvert u_2' \rvert \ll_n d/H^2} \mathrm{gcd}(d,u_2')^{n/3}
\ll_{n,\epsilon} d H^{\frac{n^2+n}{2}-\frac{2n}{3}-1+\epsilon},
\end{equation}
by noting that the summation over $u_2^\prime$ is dominated by the $O(d^\epsilon)$ values $u_2^\prime$ for which $\mathrm{gcd}(d,u_2^\prime)$ is largest possible, but that this gcd is $O_n(d/H^2)$ from the bound on $u_2^\prime$.
After summing over cubefull integers $H^{2 - 2\epsilon} < d < H^{3 - 3\epsilon}$, we find a total contribution from the non-trivial Fourier coefficients that is $O_{n,\epsilon}(H^{\frac{n^2+n}{2}-\frac{2n}{3}+3+\epsilon})$, which completes the proof of the theorem.
\end{proof}

\section{Strong and weak multiples} \label{sec:strong+weak}

In this section, we prove Theorem~\ref{thm:large-index}, which bounds the number of polynomials of height $H$ whose discriminants have large squarefree divisors.
To do this, we build on the work of Bhargava, Shankar, and Wang \cite{BSW}.
As before, to any $\mathbf{c} = (c_1, \ldots, c_n) \in \mathbb{Z}^n$ we associate the monic degree $n$ polynomial $f_{\mathbf{c}}(x) = x^n + c_1 x^{n-1} + \cdots + c_n$.

Theorem~\ref{thm:large-index} should be compared with \cite[Theorem 4.4]{BSW}, where a similar statement is proven but where the second term is $H^{\frac{n^2+n}{2}-\frac{1}{5}+\epsilon}$ instead of $H^{\frac{n^2+n}{2}-\frac{1}{2}+\epsilon}$.  It should also be compared with \cite[Theorem~4]{BSW2}, where a strictly stronger statement is proven.  Additionally, as sketched in \cite{BSW}, it is possible to prove that the first term on the right-hand side is $H^{\frac{n^2+n}{2}}/M$, but we do not pursue this as it is not necessary in the proof of our main theorem.

To prove Theorem \ref{thm:large-index}, we follow the same strategy as \cite{BSW}, offering improvements to each of the two key steps.  For a squarefree integer $m$, let $\mathcal{W}_m$ be the underlying set of polynomials whose discriminant is divisible by $m^2$, that is
\[
\mathcal{W}_m := \{ {\bf c} \in \Z^n : \lvert c_i \rvert \leq H^i, m^2 \mid \mathrm{disc}(f_{{\bf c}})\}.
\]
We decompose this set as follows.  For a prime $p$, we say that $\mathrm{disc}(f)$ is a \emph{strong multiple} of $p^2$ if $\disc(g)$ is a multiple of $p^2$ for every polynomial $g$ congruent to $f$ $\pmod{p}$, and we say that $\disc(f)$ is a \emph{weak multiple} of $p^2$ otherwise.  Let $\mathcal{W}_m^{(1)} \subseteq \mathcal{W}_m$ be the subset consisting of polynomials for which $\disc(f)$ is a strong multiple of $p^2$ for every prime $p \mid m$, and let $\mathcal{W}_m^{(2)} \subseteq \mathcal{W}_m$ be the subset of those for which $\disc(f)$ is a weak multiple of $p^2$ for every prime $p \mid m$.  By considering the factorization of $m$, Bhargava, Shankar, and Wang \cite{BSW} make the simple observation that for any $M > 1$,
\[
\bigcup_{\substack{m \geq M \\ m \text{ squarefree}}} \mathcal{W}_m
\subseteq \bigcup_{\substack{m \geq \sqrt{M} \\ m \text{ squarefree}}} \mathcal{W}_m^{(1)}
\cup
\bigcup_{\substack{m \geq \sqrt{M} \\ m \text{ squarefree}}} \mathcal{W}_m^{(2)}.
\]
Thus, to prove Theorem \ref{thm:large-index}, it is sufficient to prove the following pair of
propositions that refine~\cite[Theorem~1.5(a)]{BSW} and~\cite[Theorem~1.5(b)]{BSW}, respectively.

\begin{proposition}\label{prop:strong-multiples}
For any $Y > 1$, $H > 0$, and $\epsilon > 0$,
\begin{equation*}
\bigcup_{\substack{m \geq Y \\ m \; \text{squarefree}}}
\mathcal{W}_m^{(1)}
\ll_{n, \epsilon}
\frac{H^{\frac{n^2+n}{2} + \epsilon}}{Y}
+
H^{\frac{n^2+n}{2} - n + 1 + \epsilon}.
\end{equation*}
\end{proposition}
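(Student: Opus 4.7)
The plan is to exhibit each $\mathcal{W}_m^{(1)}$ as a union of residue classes modulo $m$ of density $O_\epsilon(m^{-2+\epsilon})$, to count integer lattice points in these classes lying in the coefficient box $B := \prod_{i=1}^n [-H^i, H^i]$ via Davenport's lattice-point lemma (Lemma~\ref{lem:davenport}), and to sum the resulting bound over squarefree $m \geq Y$ using the trivial inclusion $\lvert \bigcup_m \mathcal{W}_m^{(1)} \rvert \leq \sum_m \lvert \mathcal{W}_m^{(1)} \rvert$ (whose over-counting is mild and absorbed into $m^\epsilon$).

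For the local analysis, the Taylor expansion $\disc(f+ph) = \disc(f) + p\,\nabla\disc(f) \cdot h + O(p^2)$ shows that $f$ is a strong multiple of $p^2$ precisely when $p \mid \nabla\disc(f)$, the extra condition $p^2 \mid \disc(f)$ then following by a Newton polygon computation akin to the proof of Proposition~\ref{prop:etale-density}. The vanishing of $\nabla\disc$ modulo $p$ carves out the singular locus of the discriminant hypersurface, which parametrizes polynomials whose reduction modulo $p$ has either a triple root or two distinct double roots; the parametrizations $(r, g_p) \mapsto (x-r)^3 g_p(x)$ and $(r_1, r_2, g_p) \mapsto (x-r_1)^2 (x-r_2)^2 g_p(x)$ give at most $O(p^{n-2})$ strong-multiple residue classes modulo $p$. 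By the Chinese Remainder Theorem and a divisor bound, the density of $\mathcal{W}_m^{(1)}$ modulo a squarefree $m$ is $O_\epsilon(m^{-2+\epsilon})$.

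For each squarefree $m$, applying Davenport's lemma to each of the $O_\epsilon(m^{n-2+\epsilon})$ residue classes yields a main term $O_\epsilon(H^{(n^2+n)/2+\epsilon}/m^{2-\epsilon})$, and summing over $m \geq Y$ gives the first claimed term $H^{(n^2+n)/2+\epsilon}/Y$. The main obstacle is to sharpen the Lipschitz error: a naive estimate (from projecting out the smallest coordinate $c_1$) gives per-class error $H^{(n^2+n)/2-1}/m^{n-1}$, which sums to $H^{(n^2+n)/2-1+\epsilon}$ and is too large by a factor $H^{n-2}$ for $n \geq 3$. To recover the missing savings, I would replace the generic Lipschitz bound by a tighter count exploiting the explicit $(r, g_p)$-parametrization: since each residue class in $\mathcal{W}_m^{(1)} \bmod m$ is determined by a lower-dimensional parameter tuple, a Davenport-style count on this parameter space (rather than on the ambient coefficient space) should save the desired $H^{n-2}$ factor and yield the error $H^{(n^2+n)/2-n+1+\epsilon}$. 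Verifying that the worst-case projection error on the parameter space matches the claimed exponent, and checking that divisor-type overcounts are absorbed into $H^\epsilon$, are the technical crux.
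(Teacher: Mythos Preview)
Your local analysis is correct and agrees with the paper: a strong multiple at $p$ is characterized by $p \mid \disc(f)$ together with $p \mid \nabla\disc(f)$, which picks out the singular locus of the discriminant hypersurface over $\mathbb{F}_p$ (triple root or two double roots) and has density $O(p^{-2})$. The main term $H^{(n^2+n)/2+\epsilon}/Y$ also comes out correctly from summing $\sum_{m \geq Y} m^{-2+\epsilon}$.

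The gap is in the error term, and the proposed fix does not close it. Your parametrization $(r,g_p)\mapsto (x-r)^3 g_p(x)$ lives entirely in $(\mathbb{Z}/m\mathbb{Z})^{n-2}$: the triple root $r$ and the cofactor $g_p$ are residues, not elements of a global lattice, and the map to coefficient residue classes is nonlinear in $r$. There is no $\mathbb{Z}$-parameter space of dimension $n-2$ to which Davenport's lemma applies, and lifting $(r,g_p)$ to integers destroys the constraint $|c_i|\le H^i$. A dyadic check confirms the barrier: for $m\asymp H$ there are $\asymp H^{n-2}$ strong classes, each with Davenport error $\asymp H^{(n^2-n)/2}$, and $\asymp H$ such $m$, giving total error $\asymp H^{(n^2+n)/2-1}$. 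Within the ``sum over $m$, count per residue class'' framework this exponent appears to be a genuine limit.

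The paper's argument is different in kind. It does \emph{not} sum over $m$; instead it runs a geometric (Ekedahl-type) sieve. Keeping only the single partial derivative $\partial\disc/\partial c_n$ from the gradient, one eliminates $c_{n-1}$ via the resultant
\[
g_2(c_1,\dots,c_{n-2},c_n):=\mathrm{Res}_{c_{n-1}}\!\big(\disc(f_\mathbf{c}),\ \partial\disc/\partial c_n\big),
\]
and checks that $g_2$, as a polynomial in $c_n$, has bounded degree and nonzero constant leading coefficient. Now fix $(c_1,\dots,c_{n-2},c_n)$. If $g_2$ vanishes there are $O_n(1)$ choices of $c_n$ and one bounds $c_{n-1}$ trivially, contributing $O_n(H^{(n^2+n)/2-n})$. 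If $g_2\neq 0$, any relevant squarefree $q\geq Y$ must divide the integer $g_2$, so there are only $O_\epsilon(H^\epsilon)$ such $q$; for each, the congruence $\disc(f_\mathbf{c})\equiv 0\pmod q$ has $O_n(H^{n-1}/Y+1)$ solutions in $c_{n-1}$, since $\disc$ has bounded degree in $c_{n-1}$ with constant leading coefficient. Multiplying out the $H^{(n-1)(n-2)/2}$ choices of $(c_1,\dots,c_{n-2})$, the $H^n$ choices of $c_n$, and these factors gives exactly $H^{(n^2+n)/2+\epsilon}/Y + H^{(n^2+n)/2-n+1+\epsilon}$. The point is that the divisor bound replaces the sum over $m$: rather than paying for every $m$, one pays once per tuple for the $O(H^\epsilon)$ admissible $m$'s.
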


\begin{proposition}\label{prop:weak-multiples}
For any $Y>1$ and any $\epsilon > 0$, we have
\[
\bigcup_{\substack{m \geq Y \\ m \text{ squarefree}}} \mathcal{W}_m^{(2)}
\ll_n \frac{H^{\frac{n^2+n}{2}}}{Y} + H^{\frac{n^2+n}{2}-\frac{1}{2} + \epsilon}.
\]
\end{proposition}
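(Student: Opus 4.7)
The plan is to sharpen the Ekedahl-type sieve argument of \cite[Section~4]{BSW} by refining the boundary analysis so as to replace their error exponent $-\tfrac{1}{5}$ with the improved $-\tfrac{1}{2}$. The starting point is the local structure of weak multiples: $f$ is a weak multiple of $p^2$ precisely when $p \mid \disc(f)$ and $\nabla_{\mathbf{c}} \disc(f) \not\equiv \mathbf{0} \pmod p$. The Taylor expansion
\[
\disc(f + p\mathbf{g}) \equiv \disc(f) + p\langle \nabla_{\mathbf{c}} \disc(f),\mathbf{g}\rangle \pmod{p^2}
\]
shows that the lifts $\pmod{p^2}$ of a smooth point $f\pmod p$ on the discriminant hypersurface which land in $\mathcal{W}_p^{(2)}$ form a single hyperplane. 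A Lang--Weil count on the smooth locus then yields $\mathcal{W}_p^{(2)}$ density $O(p^{-2})$ in $(\mathbb{Z}/p^2\mathbb{Z})^n$, and the Chinese Remainder Theorem upgrades this to density $O_\epsilon(m^{-2+\epsilon})$ mod $m^2$ for squarefree $m$.

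Next, I split the count according to $m \leq H^{1/2}$ versus $m > H^{1/2}$. For $m \leq H^{1/2}$, I apply Davenport's lemma (Lemma~\ref{lem:davenport}) to the weighted box $\Omega_H = \{(c_1,\dots,c_n) : |c_i| \leq H^i\}$ intersected with the admissible residue classes, yielding
\[
\#(\mathcal{W}_m^{(2)} \cap \Omega_H) \ll \frac{H^{(n^2+n)/2}}{m^{2-\epsilon}} + m^\epsilon H^{(n^2+n)/2 - 1},
\]
where the boundary is controlled by the coordinate projection forgetting $c_1$ (the smallest-weight coordinate). Summing the main term over $m \geq Y$ via $\sum_{m \geq Y} m^{-2} \ll Y^{-1}$ produces the leading contribution $H^{(n^2+n)/2}/Y$, while summing the boundary term over $m \leq H^{1/2}$ yields $H^{(n^2+n)/2 - 1/2 + \epsilon}$, matching the claimed error.

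The main obstacle is the regime $m > H^{1/2}$, where most residue classes mod $m^2$ contain $O(1)$ lattice points and the naive Davenport estimate no longer gives the $H^{-1/2}$ saving. The plan here is to exploit the hyperplane structure identified above: after fixing the residue $f \pmod p$ on the discriminant hypersurface (codimension $1$), the further constraint mod $p^2$ is a single linear condition, so $\mathcal{W}_m^{(2)}$ is cut out by the combination of one polynomial and one linear congruence per prime dividing $m$. Fibering over the coordinates $(c_2,\dots,c_n)$ and viewing the linear constraint as determining $c_1 \pmod m$ from $(c_2,\dots,c_n,m)$, each fiber contributes $O(H/m^2 + m^\epsilon)$ admissible values of $c_1$; summing over $O(H^{(n^2+n)/2 - 1})$ choices of $(c_2,\dots,c_n)$ and over $m > H^{1/2}$ gives $O(H^{(n^2+n)/2 - 1/2 + \epsilon})$. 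The delicate point is handling the exceptional $(c_2,\dots,c_n)$ on which the linear condition in $c_1$ degenerates (tangencies of the discriminant locus to the $c_1$-axis); these form a lower-dimensional locus that must be counted separately via a secondary Ekedahl sieve, and this is precisely where the refinement over \cite{BSW} is achieved.
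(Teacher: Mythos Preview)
Your approach is fundamentally different from the paper's, and the large-$m$ regime has a real gap.

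The paper does not attempt a direct Ekedahl-type sieve on the coefficient box at all. Instead it simply invokes the machinery of \cite{BSW}, which embeds the weak-multiple locus into the space of symmetric $n\times n$ matrices over $\tfrac{1}{2}\mathbb{Z}$ via the parametrization by distinguished orbits of an orthogonal group. In that setting \cite{BSW} establish that, modulo every prime $p$, a proportion of residue classes bounded away from $0$ is forbidden. They then apply the Selberg sieve, which yields the exponent $-\tfrac{1}{5}$. The paper's only contribution is to observe that the large sieve (in the form of \cite[Theorem~10.1.1]{Serre}) applied in the identical setup gives $-\tfrac{1}{2}$ instead. No new geometry or fibering is introduced; the entire argument is one sentence.

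Your direct approach, by contrast, breaks down at the step you flag as ``delicate.'' In the regime $m>H^{1/2}$ you write that each fibre in $(c_2,\dots,c_n)$ contributes $O(H/m^2+m^\epsilon)$ values of $c_1$, and then sum over $m>H^{1/2}$. But the $m^\epsilon$ term, summed over all squarefree $m$ with $H^{1/2}<m\ll H^{n(n-1)/2}$ (the only constraint coming from $m^2\mid\disc(f)$), contributes a factor $\asymp H^{n(n-1)/2}$, not $H^{1/2}$; the calculation as written is off by a large power of $H$. The underlying issue is that you are trying to bound a \emph{union} over $m$ by summing individual counts, but for a fixed fibre the problematic $c_1$'s for different $m$ are not disjoint and cannot be summed this way. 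Reorganizing to first fix $c_1$ and then bound the number of relevant $m$ by a divisor bound does not help either: it just returns you to the question of how many $c_1\in[-H,H]$ admit \emph{some} $m>H^{1/2}$ with $m^2\mid\disc(f_\mathbf{c})$, and nothing in your outline bounds this by $O(H^{1/2+\epsilon})$. The ``secondary Ekedahl sieve'' you allude to for the degenerate locus is not the obstruction; the generic fibre already fails. Getting the $-\tfrac{1}{2}$ saving directly on the coefficient space, without passing through the symmetric-matrix embedding and a genuine sieve of large-sieve strength, does not appear to be straightforward.
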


We now turn to the proofs of these propositions.

\subsection{Strong multiples and Proposition \ref{prop:strong-multiples}}

To prove Proposition~\ref{prop:strong-multiples}, we apply a line of reasoning similar to the geometric sieve (c.f. Theorem~3.3 of~\cite{Bhargava_geosieve}).

Recall that each $\mathbf{c} \in \mathbb{Z}^n$ corresponds to a degree $n$ polynomial
$f_{\mathbf{c}} = x^n + c_1 x^{n-1} + \cdots + c_n$. The discriminant
$\disc(f_{\mathbf{c}})$ is a polynomial in the coefficients $c_i$.
More generally, to any $\mathbf{c} \in \mathbb{R}^n$ we can associate the gradient vector
$\mathbf{D}_{\mathbf{c}} := ( \frac{\partial \disc(f)}{\partial c_1}, \ldots,
\frac{\partial \disc(f)}{\partial c_n})$.
Then we see that $\disc(f_{\mathbf{c}})$ is a strong multiple of $p^2$ if and only if both
$\disc(f_\mathbf{c}) \equiv 0 \bmod p$ and $\mathbf{D}_{\mathbf{c}} \equiv \mathbf{0} \bmod p$.
Thus it suffices to bound the number of $\mathbf{c}$ with $\lvert c_i \rvert \leq H^i$ ($1 \leq
i \leq n$) such that $\disc(f_\mathbf{c}) \equiv 0 \bmod q$ and
$\frac{\partial \disc(f)}{\partial c_n} \equiv 0 \bmod q$ for some squarefree integer $q \geq Y$.
To simplify, we replace $\frac{\partial \disc(f)}{\partial c_n}(\mathbf{c})$ by the resultant
of $\frac{\partial \disc(f)}{\partial c_n}(\mathbf{c})$ and $\disc(f_\mathbf{c})$ with respect
to $c_{n-1}$. Denote the two polynomials $\disc(f_\mathbf{c})$ and
$\mathrm{Res}(\disc(f_\mathbf{c}), \frac{\partial \disc(f)}{\partial c_n})$ by
$g_1(\mathbf{c})$ and $g_2(\mathbf{c})$, respectively, where we notate $g_2(\mathbf{c})$
\textit{even though $g_2$ doesn't depend on $c_{n-1}$} so that it suffices to count common
roots $\mathbf{c}$ of $g_1$ and $g_2$.

We first show that $g_1(\mathbf{c})$, as a polynomial in $(\mathbb{Z}[c_1, \ldots, c_{n-2},
c_n])[c_{n-1}]$, has degree $n$ and nonzero constant leading coefficient, or equivalently that
$g_1(\mathbf{c}) = \sum_{i \leq n} \alpha_i c_{n-1}^i$ with $\alpha_n \neq 0$.
To see this, we compute $g_1(\mathbf{c}) = \disc(f_\mathbf{c})$ via the resultant matrix
$A$ of the monic polynomial $f_{\mathbf{c}}(x)$ and its derivative $f'_{\mathbf{c}}(x)$.
In this matrix, we observe that $c_{n-1}$ occurs exactly $0$ times in the first $n-1$ rows,
twice in rows $n$ to $(2n-2)$, and once in the $(2n-1)$st row.
Thus the degree is at most $n$. We explicitly compute that the coefficient $\alpha_n$ is
\begin{equation*}
\alpha_n = \sum_{0 \leq k \leq n-1} (-n)^k \cdot \binom{n-1}{k}
= (1 - n)^{n-1}
\neq 0.
\end{equation*}

Next, from the weighted homogeneity property of resultants, we observe that $g_2(\mathbf{c})$ may be expressed as a homogeneous polynomial of degree $n^2(n-2)$ in the roots of $f$, and is therefore a weighted homogeneous polynomial in $\mathbf{c}$ of the same degree, where each $c_i$ has weight $i$.  It therefore has degree at most $n(n-2)$ as a polynomial in $c_n$, and we observe that there is a unique permutation giving rise to a term of such degree in the determinant representation of the resultant, corresponding to choosing the top entry of the columns associated to $\mathrm{disc}(f_\mathbf{c})$ and the bottom entry of the columns associated to $\frac{\partial \disc(f)}{\partial c_n}$.  We conclude that $g_2(\mathbf{c})$, as a polynomial in $c_n$, has degree $n(n-2)$ and that its leading coefficient is a non-zero constant depending only on $n$.  In particular, $g_2(\mathbf{c})$ will be a non-zero polynomial in $c_n$ for any choice of the coordinates $(c_1,\dots,c_{n-2})$.

The total number of tuples among the first $n-2$ coordinates $(c_1, \ldots, c_{n-2})$
satisfying $\lvert c_i \rvert \leq H^i$ is of the order $H^{n(n+1)/2 - (2n - 1)}$.
For each such tuple, the number of $c_n$ such that $g_2(c_1, \ldots, c_{n-2}, c_n) = 0$ is
absolutely bounded by the degree in $c_n$, and thus bounded by $O_n(1)$; the number of
$c_{n-1}$ is trivially bounded by $H^{n-1}$.
These contribute at most $O_n(H^{n(n+1)/2 - n})$ many $\mathbf{c}$.

If $g_2(c_1, \ldots, c_{n-2}, c_n) \neq 0$, then we take $q \geq Y$ to be any squarefree
divisor of $g_2(c_1, \ldots, c_{n-2}, c_n)$; there are at most $O_{n,
\epsilon}(H^\epsilon)$ many such choices $q$. The number of $c_{n-1}$ with $\lvert
c_{n-1} \rvert \leq H^{n-1}$ such that $g_1(c_1, \ldots, c_n) \equiv 0 \bmod q$ is at most
$O_n(\max\{1, H^{n-1}/Y\})$. In total, these contribute at most
$O_n(H^{n(n+1)/2 + \epsilon} / Y) + O_n(H^{n(n+1)/2 + \epsilon - (n - 1)})$.

Combining these two bounds completes the proof of Proposition~\ref{prop:strong-multiples}.\qed{}

\subsection{Weak multiples and Proposition \ref{prop:weak-multiples}}

We now turn to Proposition \ref{prop:weak-multiples}, which is an improvement of the
second error term in \cite[Theorem 1.5(b)]{BSW}. Our input consists simply of
the application of a sharper sieve result at the heart of their argument.
In particular, the error term in question
originates in Propositions 2.6 and 3.5 of \cite{BSW}, depending on whether $n$ is odd or even.  The proofs of both propositions rely on an application of the Selberg sieve to bound the number of ``distinguished orbits'' of an appropriate orthogonal group acting on the space of symmetric $n\times n$ matrices over $\frac{1}{2}\mathbb{Z}$. For each prime $p$, they establish
that they are able to sieve out a proportion of congruence conditions which is uniformly
bounded away from $0$, and then an application of the Selberg sieve yields their result.

It turns out that an application of the large sieve in the form
of \cite[Theorem 10.1.1]{Serre} immediately yields the stated stronger result.
We also note that further improvements might be possible, in particular by incorporating
Fourier analysis when applying the Selberg sieve as a large sieve, as done in \cite{AIM1}.

\section*{Acknowledgements}
This collaboration arose initially from an AIM workshop on Fourier Analysis, Arithmetic Statistics, and Discrete Restriction organized by two of the authors (T.A. and F.T.) and Trevor Wooley. The authors thank AIM for a very supportive working environment.
The authors thank Manjul Bhargava, Arul Shankar, and Jerry Wang for sharing a preprint version of their manuscript \cite{BSW2}, and for many useful conversations.
They would also like to thank the anonymous referee for their many useful comments.

\bibliographystyle{alpha}
\newcommand{\etalchar}[1]{$^{#1}$}

\begin{dajauthors}
\begin{authorinfo}[Anderson]
  Theresa C. Anderson\\
  Gregg Zeitlin Assistant Professor of Mathematical Sciences \\
  Carnegie Mellon University\\
  Pittsburgh, PA\\
  tanders2\imageat{}andrew\imagedot{}cmu\imagedot{}edu \\
  \url{https://faculty.math.cmu.edu/anderson/}
\end{authorinfo}
\begin{authorinfo}[Gafni]
  Ayla Gafni\\
  Assistant Professor\\
  University of Mississippi\\
  Oxford, MS\\
  argafni\imageat{}olemiss\imagedot{}edu \\
  \url{https://math.olemiss.edu/ayla-gafni/}
\end{authorinfo}
\begin{authorinfo}[Hughes]
  Kevin Hughes\\
  Lecturer\\
  Edinburgh Napier University\\
  Edinburgh, Scotland\\
  khughes\imagedot{}math\imageat{}gmail\imagedot{}com \\
  \url{https://sites.google.com/site/khughesmath/}
\end{authorinfo}
\begin{authorinfo}[Lemke Oliver]
  Robert J. Lemke Oliver\\
  Associate Professor\\
  Tufts University\\
  Medford, MA\\
  robert\imagedot{}lemke\_{ }oliver\imageat{}tufts\imagedot{}edu \\
  \url{https://rlemke01.math.tufts.edu/}
\end{authorinfo}
\begin{authorinfo}[Lowry-Duda]
  David Lowry-Duda\\
  Senior Research Scientist\\
  ICERM\\
  Providence, RI\\
  david\imageat{}lowryduda\imagedot{}com \\
  \url{https://davidlowryduda.com/}
\end{authorinfo}
\begin{authorinfo}[Thorne]
  Frank Thorne\\
  Professor\\
  University of South Carolina\\
  Columbia, SC\\
  thorne\imageat{}math\imagedot{}sc\imagedot{}edu \\
  \url{https://thornef.github.io/}
\end{authorinfo}
\begin{authorinfo}[Wang]
  Jiuya Wang\\
  Assistant Professor\\
  University of Georgia\\
  Athens, GA\\
  jiuya\imagedot{}wang\imageat{}uga\imagedot{}edu \\
  \url{https://wangjiuya.github.io/}
\end{authorinfo}
\begin{authorinfo}[Zhang]
  Ruixiang Zhang\\
  Assistant Professor\\
  University of California, Berkeley\\
  Berkeley, CA\\
  ruixiang\imageat{}berkeley\imagedot{}edu \\
  \url{https://sites.google.com/view/ruixiang-zhang/home}
\end{authorinfo}
\end{dajauthors}

\end{document}